\newcommand{\End}{\mathop{\mathrm{End}}}
\newcommand{\Spec}{\mathop{\mathrm{Spec}}}
\newcommand{\skel}[1]{^{(#1)}}
\newcommand{\dom}{\mathop{\boldsymbol d}}
\newcommand{\ran}{\mathop{\boldsymbol r}}
\renewcommand{\to}{\longrightarrow}
\newcommand{\supp}{\mathop{\mathrm{supp}}}
\newcommand{\inv}{^{-1}}
\newcommand{\p}{\varphi}
\newcommand{\pinv}{{\p \inv}}
\newcommand{\ov}[1]{\ensuremath{\overline {#1}}}
\newcommand{\til}[1]{\ensuremath{\widetilde {#1}}}
\newtheorem{Thm}{Theorem}[section]
\newtheorem{Prop}[Thm]{Proposition}
\newtheorem{Lemma}[Thm]{Lemma}
{\theoremstyle{definition}
\newtheorem{Def}[Thm]{Definition}}
{\theoremstyle{remark}
\newtheorem{Rmk}[Thm]{Remark}}
\newtheorem{Cor}[Thm]{Corollary}
{\theoremstyle{remark}
}
{\theoremstyle{remark}
}
\theoremstyle{remark}
\theoremstyle{remark}
\theoremstyle{remark}
\numberwithin{equation}{section}
\title{Twists, crossed products and inverse semigroup cohomology}
\author{Benjamin Steinberg}
\address[B.~Steinberg]{%
    Department of Mathematics\\
    City College of New York\\
    Convent Avenue at 138th Street\\
    New York, New York 10031\\
    USA}
\email{bsteinberg@ccny.cuny.edu}
\thanks{The author was supported by a PSC CUNY grant.}
\date{January 23, 2021}
\keywords{ample groupoids, twists, inverse semigroup cohomology, inverse semigroup crossed products}
\subjclass[2010]{20M18,20M25, 16S99,16S36, 22A22, 18F20}
\begin{document}

\begin{abstract}
Twisted \'etale groupoid algebras have been studied recently in the algebraic setting by several authors in connection with an abstract theory of Cartan pairs of rings.  In this paper, we show that extensions of ample groupoids correspond in a precise manner to extensions of Boolean inverse semigroups. In particular, discrete twists over ample groupoids correspond to certain abelian extensions of Boolean inverse semigroups and we show that they are classified by Lausch's second cohomology group of an inverse semigroup.  The cohomology group structure corresponds to the Baer sum operation on twists.

We also define a novel notion of inverse semigroup crossed product, generalizing skew inverse semigroup rings, and prove that twisted Steinberg algebras of Hausdorff ample groupoids are instances of inverse semigroup crossed products.  The cocycle defining the crossed product is the same cocycle that classifies the twist in Lausch cohomology.
\end{abstract}

\maketitle

\section{Introduction}
Groupoid $C^*$-algebras have played an important role in the subject since Renault's seminal monograph~\cite{Renault}. An important development was a series of works, principally by Renault~\cite{Renault,renaultcartan} and Kumjian~\cite{Kumjiandiagonal}, in which it was shown that Cartan pairs of $C^*$-algebras correspond to twisted groupoid $C^*$-algebras for a certain class of \'etale groupoids.  This was used to good effect by Matui and Matsumoto in the study of symbolic dynamics via Cuntz-Krieger algebras~\cite{MatsMatu}.

An \'etale groupoid whose unit space is Hausdorff and has a basis of compact open sets is said to be ample~\cite{Paterson}.
The author introduced in~\cite{mygroupoidalgebra} algebras over any base commutative ring  associated to ample groupoids, nowadays called ``Steinberg algebras'', which are ring theoretic analogues of groupoid $C^*$-algebras.  They include group algebras, inverse semigroup algebras and Leavitt path algebras~\cite{LeavittBook,groupoidapproachleavitt}.  In recent years they have been the subject of investigation by a number of authors. See for example~\cite{Strongeffective,mygroupoidalgebra,operatorsimple1,operatorguys2,groupoidbundles,GroupoidMorita,Nekrashevychgpd,CarlsenSteinberg,ClarkPardoSteinberg,CenterLeavittGroupoid,Hazrat2017,Demeneghi,MyEffrosHahn,GonRoy2017a,mydiagonal,groupoidprimitive,groupoidprime,groupoidapproachleavitt,nonhausdorffsimple,simplicity}.

There were, in particular, a number of papers investigating when an ample groupoid can be recovered from the pair of its Steinberg algebra and the ``diagonal'' subalgebra of functions on the unit space, cf.~\cite{BCvH2017,reconstruct,mydiagonal,CarlsenSteinberg}.  This is similar in spirit to the work of Kumjian and Renault and may have motivated the authors of~\cite{twists} to introduce twisted Steinberg algebras of ample groupoids.  A theory of algebraic Cartan pairs was subsequently developed in~\cite{twistedreconstruction} that closely parallels the Renault-Kumjian theory from the $C^*$-algebra setting.

This paper arose from an attempt by the author to understand~\cite{twistedreconstruction} from the point of view of inverse semigroup theory.  It has been known for a long time that there is a close connection between ample groupoids and inverse semigroups~\cite{exelrecon,Renault,Paterson,Exel,LawsonLenz}.  The strategy of the author to reconstruct an ample groupoid from its Steinberg algebra and diagonal subalgebra in~\cite{mydiagonal} was as follows.  From a Steinberg algebra of an ample groupoid $\mathscr G$ and its diagonal subalgebra, one obtains an exact sequence of inverse semigroups $K\to S\to S/K$ where $S$ is the normalizer of the diagonal subalgebra and $K$ is the semigroup of diagonal normalizers (which is a normal inverse subsemigroup of $S$).  It turns out that if the original groupoid is effective, for example, then $S/K$ is isomorphic to the inverse semigroup of compact open bisections of $\mathscr G$.  Since $\mathscr G$ can be reconstructed from its inverse semigroup of compact open bisections via Exel's ultrafilter groupoid construction~\cite{Exel,exelrecon}, this shows that $\mathscr G$ is determined by its Steinberg algebra and diagonal subalgebra.

Therefore, it is natural to attempt to show that discrete twists over ample groupoids correspond to certain exact sequences of inverse semigroups.  Then one could reprove some of the results of~\cite{twistedreconstruction} via purely inverse semigroup theoretic means by showing that the exact sequence corresponding to the twist is equivalent to the exact sequence coming from the Cartan pair.  This paper sets the foundations for such an approach.

The papers of Bice~\cite{Bice1,Bice2} seem to be related to ours in that it wants to understand related algebras using inverse semigroup theory, but it does not apply the categorical and cohomological approach we do, nor the crossed product construction.

We begin the paper by giving a covariant equivalence of categories between ample groupoids and appropriately restricted functors and Boolean inverse semigroups and appropriately restricted homomorphisms.  Moreover, we show these functors send exact sequences of ample groupoids to exact sequences of inverse semigroups, and vice versa.  We then describe the exact sequences of inverse semigroups corresponding to twists over an ample groupoid by a discrete abelian group $A$ (written multiplicatively).  We show that $A$-twists over $\mathscr G$ correspond to extensions of the inverse semigroup  $\Gamma_c(\mathscr G)$ of compact open bisections of $\mathscr G$ by the commutative inverse semigroup $\til A$ of compactly supported locally constant functions $f\colon \mathscr G\skel 0\to A\cup \{0\}$.  Such extensions are then classified by Lausch's cohomology theory of inverse semigroups~\cite{lausch}.  In particular, we show that abelian group of  equivalence classes of $A$-twists under Baer sum is isomorphic to Lausch's second cohomology group $H^2(\Gamma_c(\mathscr G),\til A)$.  This is similar in spirit to how Lausch cohomology arose in the work on Cartan pairs of von Neumann algebras in~\cite{Donsig}.  We also show that the question of whether the twist admits a continuous global section can be interpreted inverse semigroup theoretically using ideas of~\cite{Donsig}.

In the final section of the paper, we introduce what we believe to be a new notion of inverse semigroup crossed product arising from an action of an inverse semigroup on a ring and a $2$-cocycle with respect to the action (a related, but different $C^*$-algebraic notion is in~\cite{ExelBuss}).  It generalizes the skew inverse semigroup ring construction for a large class of actions (those in~\cite{Skewasconv}), as well as group crossed products (with respect to an action).  After developing the basic properties of the crossed product (including a universal property), we show that twisted Steinberg algebras are inverse semigroup crossed products with respect to the same Lausch  $2$-cocycle that classifies the twist under our bijection between twists and cohomology classes.

\section{Groupoids and inverse semigroups}
Here we recall some basic notions about inverse semigroups and groupoids.

\subsection{Inverse semigroups}
An \emph{inverse semigroup} is a semigroup $S$ such that, for each $s\in S$, there is a unique element $s^*\in S$ with $ss^*s=s$ and $s^*ss^*=s^*$.  In an inverse semigroup, the idempotents commute and hence form a subsemigroup $E(S)$. Moreover, $e^*=e$ for any idempotent $e$.  Note that $ss^*,s^*s\in E(S)$ for any $s\in S$, as is $ses^*$ for any $e\in E(S)$.  We also observe that $(st)^*=t^*s^*$.  An element $s$ of a semigroup $S$ is (von Neumann) \emph{regular} if $s=ss's$ with $s'\in S$; a semigroup $S$ is (von Neumann) \emph{regular} if all its elements are regular.  It is well known that inverse semigroups are precisely the regular semigroups with commuting idempotents.

 There is a natural partial order on $S$ given by $s\leq t$ if $s=te$ for some idempotent $e\in E(S)$ or, equivalently, $s=ft$ for some $f\in E(S)$.  One can, in fact, take $e=s^*s$ and $f=ss^*$.  The natural partial order is compatible with multiplication and preserved (not reversed) by the involution. If $e,f\in E(S)$, then $ef$ is the meet of $e,f$ in the natural partial order and so $E(S)$ is a meet semilattice.  Any homomorphism $\p\colon S\to T$ of inverse semigroups automatically preserves the involution and order.  We say that $\p$ is \emph{idempotent separating} if $\p|_{E(S)}$ is injective.

A \emph{normal} inverse subsemigroup of an inverse semigroup $S$ is an inverse subsemigroup $K$ with $E(K)=E(S)$ and $sKs^*\subseteq K$ for all $s\in S$.   If $\p\colon S\to T$ is a homomorphism of inverse semigroups, then $\ker \p=\pinv(E(T))$ is a normal inverse subsemigroup and idempotent separating homomorphisms are ``determined'' by their kernels.   See~\cite{Lawson} for basics on inverse semigroup theory.

Many inverse semigroups have a zero element.  If $S$ is an inverse semigroup with zero, then we say that $s,t\in S$ are \emph{orthogonal} if $st^*=0=t^*s$.  A \emph{Boolean inverse semigroup} is an inverse semigroup $S$ with zero  such that $E(S)$ is a Boolean algebra, and which admits joins of orthogonal pairs of elements.  When we say that $E(S)$ is a Boolean algebra, we mean that it admits finite joins, and these joins distribute over meets, and it has relative complements (if $f\leq e$, then there exists $e\setminus f$ with $f(e\setminus f)=0$ and $e=f\vee (e\setminus f)$).  Note that if $s,t$ are orthogonal, then $(s\vee t)(s\vee t)^* = ss^*\vee tt^*$ and $(s\vee t)^*(s\vee t)=s^*s\vee t^*t$.  A homomorphism $\p\colon S\to T$ of Boolean inverse semigroups is called \emph{additive} if it preserves joins of orthogonal idempotents, in which case it preserves all finite joins existing in $S$. There are a number of other axiomatizations of Boolean inverse semigroups and we refer the reader to~\cite{wehrung} for more details.

If $X$ is a topological space, then the set $I_X$ of all homeomorphisms between open subsets of $X$ is an inverse semigroup under composition of partial functions.  An action of an inverse semigroup $S$ on a space $X$ by partial homeomorphisms is just a homomorphism $\alpha\colon S\to I_X$.  The action is \emph{non-degenerate} if $X$ is the union of the domains of the elements of $\alpha(S)$.

\subsection{Groupoids}
A \emph{groupoid} $\mathscr G$ is a small category in which each arrow is invertible.  We take the approach here, popular in analysis, of viewing $\mathscr G$ as a set with a partially defined multiplication and a totally defined inversion.  The objects of $\mathscr G$ are identified with the identity arrows (also called units) and the unit space is denoted $\mathscr G\skel 0$.  We use $\dom\colon \mathscr G\to \mathscr G\skel 0$ and $\ran\colon \mathscr G\to \mathscr G\skel 0$ for the domain and range maps, respectively.

A \emph{topological groupoid} is a groupoid endowed with a topology making the multiplication and inversion maps continuous.  As $\dom(g)=g\inv g$ and $\ran(g)=gg\inv$, the domain and range maps are also continuous where we give here $\mathscr G\skel 0$ the subspace topology.   When working with topological groupoids, we want our functors to be continuous.

A topological groupoid $\mathscr G$ is \emph{\'etale} if the domain map is a local homeomorphism.  This is equivalent to the range map being a local homeomorphism and also to the multiplication map being a local homeomorphism.  The unit space $\mathscr G\skel 0$ of an \'etale groupoid is open.  See~\cite{resendeetale} for details.  A (local) \emph{bisection} of an \'etale groupoid is an open subset $U\subseteq \mathscr G$ such that $\dom|_U$ and $\ran|_U$ are injective; some authors do not required bisections to be open and so we often say ``open bisection'' for emphasis.  The origin of the term ``bisection'' is that they correspond to subsets of $\mathscr G$ that are simultaneously  the image of a local section of $\dom$ and of $\ran$.  If $U,V$ are bisections, then so are $UV=\{gg'\mid g\in U, g'\in V\}$ and $U^*=\{g\inv\mid g\in U\}$. The bisections form an inverse semigroup with respect to this product operation with $U^*$ as the inverse of $U$. The idempotent bisections are the open subsets of $\mathscr G\skel 0$ and the natural partial order is just containment.

An \'etale groupoid $\mathscr G$ is called \emph{ample} (following Paterson~\cite{Paterson}) if $\mathscr G\skel 0$ is Hausdorff and has a basis of compact open sets.  In this case, the compact open bisections form a basis for the topology of $\mathscr G$.  The collection $\Gamma_c(\mathscr G)$ of compact open bisections of $\mathscr G$ is a Boolean inverse semigroup.  References for ample groupoids include~\cite{Paterson,Exel,mygroupoidalgebra}.  Ample groupoids arise in nature as groupoids of germs of actions of inverse semigroups on Hausdorff spaces with bases of compact open sets.

\section{Extensions of groupoids and inverse semigroups}

\subsection{An equivalence of categories}
In this subsection we show that the bijection between isomorphism classes of ample groupoids and Boolean inverse semigroups arising from the work of Exel~\cite{Exel,exelrecon} and Lawson and Lenz~\cite{LawsonLenz} can be turned into a covariant equivalence of categories if we restrict our morphisms suitably.  We will use this equivalence to show that discrete twists over an ample groupoid correspond to certain idempotent-separating extensions of inverse semigroups that are classified by Lausch cohomology~\cite{lausch}.

Let us call a continuous functor $\p\colon \mathscr G\to \mathscr H$ of  topological groupoids \emph{iso-unital} if $\p|_{\mathscr G\skel 0}\colon \mathscr G\skel 0\to \mathscr H\skel 0$ is a homeomorphism.  Note that if $\mathscr G$ and $\mathscr H$ are \'etale and $\p$ is open, then this is equivalent to $\p|_{\mathscr G\skel 0}\colon \mathscr G\skel 0\to \mathscr H\skel 0$ being bijective.  Notice that if $\p$ is iso-unital, then $\dom(\p(g)) = \dom(\p(g'))$ if and only if $\dom(g)=\dom(g')$, and dually $\ran(\p(g))=\ran(\p(g'))$ if and only if $\ran(g)=\ran(g')$.

\begin{Prop}\label{p:local.homeo}
Let $\p\colon \mathscr G\to \mathscr H$ be an iso-unital functor between \'etale groupoids.  Then the restriction of $\p$ to any bisection is injective.    Consequently, $\p$ is open if and only if $\p$ is a local homeomorphism.
\end{Prop}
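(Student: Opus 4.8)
The plan is to establish the injectivity statement first and then bootstrap it to the local homeomorphism characterization using openness.

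First I would fix a bisection $U$ and take $g,g'\in U$ with $\p(g)=\p(g')$. Applying the domain map and using that $\p$ is a functor (hence commutes with $\dom$) gives $\dom(\p(g))=\dom(\p(g'))$. By the observation recorded just before the statement, the iso-unital hypothesis then forces $\dom(g)=\dom(g')$. Since $\dom|_U$ is injective by the very definition of a bisection, this yields $g=g'$, so $\p|_U$ is injective. This is the whole content of the first assertion, and it uses nothing about openness.

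For the equivalence, one direction is immediate: a local homeomorphism is always an open map, so if $\p$ is a local homeomorphism it is open. For the converse, suppose $\p$ is open. Since $\mathscr G$ is \'etale, its topology has a basis of bisections, so it suffices to show that $\p$ restricts to a homeomorphism onto an open set on each basic bisection $U$. On such a $U$, the map $\p|_U$ is continuous, injective by the first part, and open onto its image, the last point because the restriction of an open map to an open subset is open onto its image. A continuous open injection is a homeomorphism onto its image, and $\p(U)$ is open because $\p$ is open; hence $\p|_U\colon U\to \p(U)$ is a homeomorphism onto an open set. As every point of $\mathscr G$ lies in some such $U$, the map $\p$ is a local homeomorphism.

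I do not anticipate a serious obstacle. The one point that must be invoked with care is that an \'etale groupoid carries a basis of open bisections; this is exactly what upgrades the pointwise injectivity on bisections into a genuine local homeomorphism once openness is assumed. The role of the iso-unital hypothesis is confined to the injectivity step, where it transfers equality of domains in $\mathscr H$ back to equality of domains in $\mathscr G$, and this transfer is the crux of the argument.
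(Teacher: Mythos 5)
Your proposal is correct and follows essentially the same route as the paper's proof: injectivity on bisections via the iso-unital transfer of equality of domains, and then the open-map-plus-basis-of-bisections argument for the local homeomorphism equivalence. The only cosmetic difference is that you spell out explicitly why the restriction of an open map to an open bisection is open onto its image, a point the paper leaves implicit.
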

\begin{proof}
Let $U\subseteq \mathscr G$ be a bisection.   If $g_1,g_2\in U$ and $\p(g_1)=\p(g_2)$, then since $\p$ is iso-unital, we must have $\dom(g_1)=\dom(g_2)$ and so $g_1=g_2$ as $U$ is a bisection.  Thus $\p|_U$ is injective.  Clearly, if $\p$ is a local homeomorphism, it is open.  Conversely, if $\p$ is open and $g\in \mathscr G$, then since $\mathscr G$ is \'etale, we can find an open bisection $U$ with $g\in U$. Then $\p|_U\colon U\to \p(U)$ is injective and open and hence a homeomorphism. Therefore, $\p$ is a local homeomorphism.
\end{proof}

The corresponding notion for inverse semigroups is the following.  A homomorphism $\p\colon S\to T$ of inverse semigroups is \emph{idempotent bijective} if $\p|_{E(S)}\colon E(S)\to E(T)$ is a bijection.  Note that if $S$ and $T$ are Boolean inverse semigroups, then an idempotent bijective homomorphism is clearly additive.

The following lemma combines two well-known facts, but we include a proof for completeness.
\begin{Lemma}\label{l:injective}
Let $\p\colon S\to T$ be an idempotent bijective homomorphism of inverse semigroups and $\psi\colon \mathscr G\to \mathscr H$ an iso-unital functor between topological groupoids.
\begin{enumerate}
\item  $\p$ is injective if and only if $\ker \p=\p\inv(E(T))=E(S)$.
\item $\psi$ is injective if and only if $\psi\inv(\mathscr H\skel 0)=\mathscr G\skel 0$.
\end{enumerate}
\end{Lemma}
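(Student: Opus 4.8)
The plan is to prove the two statements in parallel, since they are the inverse-semigroup and groupoid incarnations of the same principle: an idempotent-bijective (resp.\ iso-unital) map is injective precisely when nothing outside the ``unit part'' is sent into the unit part. In each case the forward implication is the easy one, and I would dispose of it first. For (1), if $\p$ is injective then $E(S)\subseteq \ker\p=\pinv(E(T))$ because $\p$ preserves idempotents, while conversely any $s$ with $\p(s)\in E(T)$ satisfies $\p(s)=\p(e)$ for the unique $e\in E(S)$ supplied by idempotent bijectivity, so $s=e\in E(S)$; hence $\ker\p=E(S)$. The forward implication of (2) is the same argument with $\mathscr G\skel 0$ in place of $E(S)$, using that a functor carries units to units and that an iso-unital functor restricts to a bijection $\mathscr G\skel 0\to\mathscr H\skel 0$.

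For the reverse implication of (1), I would assume $\ker\p=E(S)$ and take $s,t$ with $\p(s)=\p(t)$. The key move is to examine $s^*t$: since $\p(s^*t)=\p(s)^*\p(t)=\p(t)^*\p(t)=\p(t^*t)$ is idempotent, the hypothesis forces $e:=s^*t\in\pinv(E(T))=E(S)$, so $e$ is idempotent and $e=e^*=t^*s$. Idempotent bijectivity also makes $\p$ idempotent separating, so $\p(ss^*)=\p(tt^*)$ gives $ss^*=tt^*$. Then $se=ss^*t=tt^*t=t$ and $te=tt^*s=ss^*s=s$, exhibiting $t\le s$ and $s\le t$, whence $s=t$ by antisymmetry of the natural partial order.

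For the reverse implication of (2), I would run the groupoid translation of this argument. Assuming $\psi\inv(\mathscr H\skel 0)=\mathscr G\skel 0$ and $\psi(g)=\psi(h)$, the remark preceding Proposition~\ref{p:local.homeo} gives $\dom(g)=\dom(h)$ and $\ran(g)=\ran(h)$; in particular $\dom(g\inv)=\ran(g)=\ran(h)$, so the product $g\inv h$ is defined. Its image $\psi(g\inv h)=\psi(g)\inv\psi(g)$ lies in $\mathscr H\skel 0$, so the hypothesis places $g\inv h$ in $\mathscr G\skel 0$, i.e.\ $g\inv h$ is a unit. Since $\ran(g\inv h)=\ran(g\inv)=\dom(g)=g\inv g$ and a unit coincides with its range, we get $g\inv h=g\inv g$, and left cancellation in the groupoid yields $g=h$.

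The only step I would treat with care is the reverse implication, where equality of images must be upgraded to equality of elements; the common device is to form $s^*t$ (resp.\ $g\inv h$), use the kernel/preimage hypothesis to land it in $E(S)$ (resp.\ $\mathscr G\skel 0$), and then conclude via the partial order (resp.\ cancellation). I do not anticipate a genuine obstacle, but I would be careful to invoke idempotent separation in part (1) to pass from $\p(ss^*)=\p(tt^*)$ to $ss^*=tt^*$, as this is exactly where the full strength of idempotent bijectivity, beyond mere surjectivity onto idempotents, is used.
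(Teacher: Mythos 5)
Your proof is correct and takes essentially the same approach as the paper: the easy forward direction first, then for the converse forming a single element ($s^*t$, resp.\ $g\inv h$), using the kernel/preimage hypothesis to place it in $E(S)$ (resp.\ $\mathscr G\skel 0$), and concluding via antisymmetry of the natural partial order (resp.\ cancellation). The only cosmetic difference is that the paper works with the mirror-image elements $s_1s_2^*$ and $g_1g_2\inv$ instead of $s^*t$ and $g\inv h$.
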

\begin{proof}
If $\p$ is injective, then $\p(s)=e\in E(T)$ implies $\p(s^*s)=e^*e=e$ and so $s=s^*s\in E(S)$.  Conversely,  if $\p\inv(E(T))=E(S)$ and $\p(s_1)=\p(s_2)$, then since $\p$ is idempotent bijective, $s_1^*s_1=s_2^*s_2$.  Also $\p(s_1s_2^*)=\p(s_1s_1^*)\in E(T)$ and so $s_1s_2^*\in E(S)$.  Therefore, $s_1=s_1s_1^*s_1=s_1s_2^*s_2\leq s_2$ and dually $s_2\leq s_1$ and hence $s_1=s_2$.  Thus $\p$ is injective.

Similarly, if $\psi$ is injective, then $\psi(g)=x\in \mathscr H\skel 0$ implies $\psi(g)=\psi(\dom(g))$  and so $g=\dom(g)\in \mathscr G\skel 0$.  On the other hand, if $\psi\inv(\mathscr H\skel 0)=\mathscr G\skel 0$ and $\psi(g_1)=\psi(g_2)$, then $\dom(g_1)=\dom(g_2)$ and $\ran(g_1)=\ran(g_2)$, as $\psi$ is iso-unital, and $\psi(g_1g_2\inv)=\psi(g_1g_1\inv)\in \mathscr H\skel 0$ and so $g_1g_2\inv \in \mathscr G\skel 0$, whence $g_1=g_2$.
\end{proof}

To every ample groupoid $\mathscr G$, we can associate the Boolean inverse semigroup $\Gamma_c(\mathscr G)$.
The inverse construction is more difficult to describe.  If $S$ is a Boolean inverse semigroup, then $\Spec(E(S))$ denotes the \emph{Stone space} of the Boolean algebra $E(S)$.  It elements are the non-zero Boolean algebra homomorphisms (characters) $\chi\colon E\to \{0,1\}$.  A basis of compact open subsets for $\Spec(E(S))$ consists of the sets of the form \[D(e) = \{\chi\in \Spec(E(S))\mid \chi(e)=1\}\] and in fact $e\mapsto D(e)$ is an isomorphism of Boolean algebras between $E(S)$ and the Boolean algebra of compact open subsets of $\Spec(E(S))$.  There is a natural action of $S$ on $\Spec(E(S))$ by partial homeomorphisms. To each $s\in S$, there is a homeomorphism $\beta_s\colon D(s^*s)\to D(ss^*)$ given by $\beta_s(\chi)(e) = \chi(s^*es)$ and the assignment $s\mapsto \beta_s$ is a non-degenerate action of $S$ by partial homeomorphisms. We often write $s\chi$ instead of $\beta_s(\chi)$.   Put $\mathcal G(S)=S\ltimes \Spec(E(S))$, the groupoid of germs.  Recall that \[\mathcal G(S) = (S\times \Spec(B))/{\sim}\] where $(s,\chi)\sim (t,\lambda)$ if and only if $\chi=\lambda$ and there exists $u\leq s,t$ with $\chi(u^*u)=1$.  We write $[s,\chi]$ for the equivalence class of $(s,\chi)$.  The groupoid multiplication is given by defining $[s,\chi][t,\lambda]$ if and only if $\chi = t\lambda$, in which case the product is $[st,\lambda]$.  A basis for the topology is given by the compact open bisections \[D(s) = \{[s,\chi]\mid \chi\in D(s^*s)\}\] and this topology makes $\mathcal G(S)$ into an ample groupoid. The unit space $\mathcal G(S)\skel 0$ can be identified homeomorphically with $\Spec(E(S))$ via $\chi\mapsto [e,\chi]$ where $e\in E(S)$ with $\chi(e)=1$.  Under this identification, $\dom([s,\chi])=\chi$ and $\ran([s,\chi]) = s\chi$.  Inversion is given by $[s,\chi]^{-1}=[s^*,s\chi]$.  See~\cite{Exel,Paterson,mygroupoidalgebra} for more on groupoids of germs. (In~\cite{LawsonLenz}, they work with a groupoid of ultrafilters on the poset $S$, but it is well known and easy to see that this groupoid is isomorphic to $\mathcal G(S)$.)

The following theorem combines work of Exel~\cite{Exel,exelrecon}, Lawson and Lenz~\cite{LawsonLenz}.

\begin{Thm}[Exel-Lawson-Lenz]\label{t:duality}
Let $\mathscr H$ be an ample groupoid and $S$ a Boolean inverse semigroup.
\begin{enumerate}
\item  There is an isomorphism $\eta_{\mathscr H}\colon \mathscr H\to \mathcal G(\Gamma_c(\mathscr H))$ given by $\eta_{\mathscr H}(h) = [U,\chi_{\dom(h)}]$ where $U$ is any compact open bisection containing $h$ and $\chi_x$ is the characteristic function for the set of compact open subsets of $\mathscr H\skel 0$ containing $x\in \mathscr H\skel 0$.
\item There is an isomorphism $\varepsilon_S\colon S\to \Gamma_c(\mathcal G(S))$ given by $\varepsilon_S(s) = D(s)$.
\end{enumerate}
\end{Thm}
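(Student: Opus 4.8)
The plan is to verify that each of $\eta_{\mathscr H}$ and $\varepsilon_S$ is simultaneously a bijection and a morphism in the appropriate category (an isomorphism of topological groupoids in (1), of Boolean inverse semigroups in (2)), reducing the injectivity half of each to Lemma~\ref{l:injective}. The common engine behind both parts is Stone duality: the assignment $e\mapsto D(e)$ is an isomorphism between $E(S)$ and the Boolean algebra of compact open subsets of $\Spec(E(S))$, and dually, for an ample $\mathscr H$ the map $x\mapsto\chi_x$ identifies $\mathscr H\skel 0$ homeomorphically with $\Spec(E(\Gamma_c(\mathscr H)))$, because $E(\Gamma_c(\mathscr H))$ is precisely the Boolean algebra of compact open subsets of $\mathscr H\skel 0$. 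I would record this homeomorphism first, since it disposes of all unit-space bookkeeping in one stroke and makes both functors iso-unital.

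For part (1), I would first check that $\eta_{\mathscr H}$ is well defined: if $U,V$ are compact open bisections each containing $h$, then $U\cap V$ is a compact open bisection with $U\cap V\leq U,V$ and $\chi_{\dom(h)}((U\cap V)^*(U\cap V))=1$, so $(U,\chi_{\dom(h)})\sim(V,\chi_{\dom(h)})$ and the germ is independent of the choice. Next I would confirm $\eta_{\mathscr H}$ is a continuous functor: compatibility with $\dom$, $\ran$ and inversion is immediate from $\dom([s,\chi])=\chi$, $\ran([s,\chi])=s\chi$ and $[s,\chi]\inv=[s^*,s\chi]$, while multiplicativity follows by choosing bisections $U\ni h$, $V\ni h'$ with $\dom(h)=\ran(h')$, noting $UV\ni hh'$, and using that $\beta_V$ corresponds under $x\mapsto\chi_x$ to $\ran|_V\circ(\dom|_V)\inv$, so that $\beta_V(\chi_{\dom(h')})=\chi_{\ran(h')}=\chi_{\dom(h)}$ and hence $[U,\chi_{\dom(h)}][V,\chi_{\dom(h')}]=[UV,\chi_{\dom(h')}]=\eta_{\mathscr H}(hh')$. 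Surjectivity is then clear: a germ $[U,\chi]$ has $\chi\in D(U^*U)$, hence $\chi=\chi_x$ with $x\in\dom(U)$, and the unique $h\in U$ with $\dom(h)=x$ satisfies $\eta_{\mathscr H}(h)=[U,\chi]$. Since $\eta_{\mathscr H}$ is iso-unital with $\eta_{\mathscr H}\inv(\mathcal G(\Gamma_c(\mathscr H))\skel 0)=\mathscr H\skel 0$, Lemma~\ref{l:injective}(2) gives injectivity. Finally $\eta_{\mathscr H}$ restricts to a bijection $U\to D(U)$ for every compact open bisection $U$, and as these form bases on both sides, $\eta_{\mathscr H}$ is an open continuous bijection, i.e.\ an isomorphism of topological groupoids.

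For part (2), I would verify that $\varepsilon_S(s)=D(s)$ is a homomorphism directly from the definitions of the product of bisections and of the action $\beta$, checking $D(s)D(t)=D(st)$ and $D(s)^*=D(s^*)$ germ by germ. By Stone duality $\varepsilon_S$ restricts to the isomorphism $e\mapsto D(e)$ on idempotents, so it is idempotent bijective, hence additive. For injectivity I would compute $\varepsilon_S\inv(E(\Gamma_c(\mathcal G(S))))=E(S)$: if $D(s)$ lies in the unit space then for each $\chi\in D(s^*s)$ some idempotent $u\leq s$ has $\chi(u)=1$; the resulting cover of the compact set $D(s^*s)$ by such $D(u)$ admits a finite subcover $D(u_1),\ldots,D(u_n)$, whose join $\bigvee u_i=s^*s$ satisfies $\bigvee u_i\leq s$ since each $u_i\leq s$, forcing $s^*s\leq s$ and hence $s=s^*s\in E(S)$; Lemma~\ref{l:injective}(1) then applies. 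Surjectivity is the remaining point: given a compact open bisection $W$ of $\mathcal G(S)$, I would cover it by finitely many basic bisections $D(s_i)\subseteq W$, use relative complements in $E(S)$ to refine to a pairwise orthogonal family $D(t_j)$, and invoke additivity to identify the union with $D\bigl(\bigvee_j t_j\bigr)=W$.

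The step I expect to be the main obstacle is this surjectivity of $\varepsilon_S$, together with the kernel computation feeding Lemma~\ref{l:injective}(1): both require turning a covering of a compact open bisection into a finite \emph{orthogonal} refinement and recognizing that orthogonal joins in $S$ are carried to unions of the corresponding $D(t_j)$, which is exactly where the Boolean-inverse-semigroup axioms (relative complements in $E(S)$ and existence of orthogonal joins) and additivity of $\varepsilon_S$ must be used with care. The functorial and well-definedness checks, by contrast, become routine once the Stone-duality homeomorphism $x\mapsto\chi_x$ is in hand.
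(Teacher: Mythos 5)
The first thing to note is that the paper contains no proof of Theorem~\ref{t:duality} at all: it is imported as known, combining Exel's reconstruction theorem with Lawson--Lenz duality, and the proof lives in \cite{Exel,exelrecon,LawsonLenz}. So there is no internal argument to compare against; your reconstruction follows what is in fact the standard route --- Stone duality identifying $\mathscr H\skel 0$ with $\Spec(E(\Gamma_c(\mathscr H)))$ via $x\mapsto\chi_x$, direct verification that $\eta_{\mathscr H}$ and $\varepsilon_S$ are morphisms, injectivity fed through Lemma~\ref{l:injective}, and surjectivity of $\varepsilon_S$ by compactness plus orthogonal refinement --- and in outline it is correct.

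Two steps need repair, one of which genuinely fails as written. In your well-definedness check for $\eta_{\mathscr H}$ you assert that if $U,V$ are compact open bisections containing $h$, then $U\cap V$ is a compact open bisection. This is false in the generality the theorem requires: the paper's ample groupoids are \emph{not} assumed Hausdorff (only $\mathscr H\skel 0$ is), and in a non-Hausdorff ample groupoid the intersection of two compact open sets need not be compact --- this is exactly the phenomenon that Proposition~\ref{p:ext.hausdorff} and the later Hausdorffness hypotheses are designed around. The fix is immediate: since compact open bisections form a basis, pick $W\in\Gamma_c(\mathscr H)$ with $h\in W\subseteq U\cap V$; the germ relation only asks for \emph{some} $u\leq U,V$ with $\chi_{\dom(h)}(u^*u)=1$, and $W$ serves. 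Second, in the injectivity argument for $\varepsilon_S$ you pass from ``each $u_i\leq s$'' to ``$\bigvee u_i\leq s$'', where $\bigvee u_i$ is the join in the Boolean algebra $E(S)$. This is true but not free: a priori that join is only a least upper bound within $E(S)$, not within $S$. You must disjointify first, replacing the $u_i$ by the pairwise orthogonal idempotents $f_i=u_i\setminus(u_1\vee\cdots\vee u_{i-1})$, which still satisfy $f_i\leq s$ and have the same join, and then use that joins of \emph{orthogonal} pairs are least upper bounds in $S$ (which is what the Boolean inverse semigroup axioms provide, together with the observation that such a join of idempotents is again idempotent). The same orthogonalization appears in your surjectivity sketch, where one additional point deserves emphasis: domain-disjointness of the refined family $D(t_j)\subseteq W$ gives $t_it_j^*=0$ but not yet $t_j^*t_i=0$; you need the bisection property of $W$ to convert disjoint domains into disjoint ranges and hence full orthogonality, and again to see that $\bigcup_j D(t_j)$ exhausts $W$. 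You flagged this as the delicate point, correctly; and your unverified claim that $\eta_{\mathscr H}\inv$ of the unit space is $\mathscr H\skel 0$ is indeed routine. With these repairs the proof is complete.
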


We now show that the constructions $\mathscr H\mapsto \Gamma_c(\mathscr H)$ and $S\mapsto \mathcal G(S)$ can be promoted to inverse equivalences between the category of ample groupoids with open iso-unital functors and the category of Boolean inverse semigroups with idempotent bijective homomorphisms.

\begin{Thm}\label{t:equiv.cats}
The category of ample groupoids with open iso-unital functors is equivalent to the category of Boolean inverse semigroups with idempotent bijective homomorphisms.  More precisely, $\Gamma_c$ is a functor from the category of ample groupoids with open iso-unital functors to the category of Boolean inverse semigroups with idempotent bijective homomorphisms, where  if $\p\colon \mathscr G\to \mathscr H$ is an open iso-unital functor, then $\Gamma_c(\p)\colon \Gamma_c(\mathscr G)\to \Gamma_c(\mathscr H)$ is given by $\Gamma_c(\p)(U)=\p(U)$.  Moreover, the groupoid of germs construction $S\mapsto \mathcal G(S)$ provides a quasi-inverse functor where if $\p\colon S\to T$ is idempotent bijective, then $\mathcal G(\p)([s,\chi]) = [\p(s), \chi\circ (\p|_{E(S)})^{-1}]$.
\end{Thm}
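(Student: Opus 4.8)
The plan is to treat the object-level bijections as already supplied by Theorem~\ref{t:duality} and to reduce the statement to three tasks: showing that $\Gamma_c$ and $\mathcal G$ act functorially on the two chosen classes of morphisms, and showing that the isomorphisms $\eta_{\mathscr H}$ and $\varepsilon_S$ of Theorem~\ref{t:duality} assemble into natural transformations $\mathrm{Id}\Rightarrow \mathcal G\circ\Gamma_c$ and $\mathrm{Id}\Rightarrow \Gamma_c\circ\mathcal G$. Since each component of these transformations is an isomorphism, the two functors are then quasi-inverse and the equivalence follows.

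For $\Gamma_c$ on morphisms, let $\p\colon\mathscr G\to\mathscr H$ be open and iso-unital and $U\in\Gamma_c(\mathscr G)$. By Proposition~\ref{p:local.homeo} the functor $\p$ is a local homeomorphism and $\p|_U$ is injective, so $\p(U)$ is open and is a bisection, and it is compact as the continuous image of a compact set; hence $\Gamma_c(\p)(U)=\p(U)$ indeed lies in $\Gamma_c(\mathscr H)$. That $\Gamma_c(\p)$ is a homomorphism, namely $\p(UV)=\p(U)\p(V)$ and $\p(U^*)=\p(U)^*$, is immediate from $\p$ being a functor. It is idempotent bijective because the idempotents of $\Gamma_c(\mathscr G)$ are precisely the compact open subsets of $\mathscr G\skel 0$ and $\p|_{\mathscr G\skel 0}$ is a homeomorphism; functoriality in the composition-and-identity sense is then clear since $(\p\psi)(U)=\p(\psi(U))$.

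For $\mathcal G$ on morphisms, let $\p\colon S\to T$ be idempotent bijective. By additivity $\p|_{E(S)}$ preserves meets and orthogonal joins, hence also relative complements, so it is an isomorphism of (generalized) Boolean algebras, and I would invoke Stone duality to record that $\wh\chi=\chi\circ(\p|_{E(S)})^{-1}$ defines a homeomorphism $\Spec(E(S))\to\Spec(E(T))$. The first point is well-definedness of $\mathcal G(\p)$ on germs: if $u\leq s,t$ witnesses $(s,\chi)\sim(t,\chi)$, then $\p$ preserves the order, so $\p(u)\leq\p(s),\p(t)$, and $\wh\chi(\p(u)^*\p(u))=\chi(u^*u)=1$, giving $(\p(s),\wh\chi)\sim(\p(t),\wh\chi)$. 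Because $\p$ is a functor with $\p(E(S))\subseteq E(T)$, the map $\mathcal G(\p)$ respects composable products, inversion, and units. I would then compute $\mathcal G(\p)(D(s))=D(\p(s))$ from the equivalence $\wh\chi\in D(\p(s^*s))\iff\chi\in D(s^*s)$ together with surjectivity of $\chi\mapsto\wh\chi$; under the identifications $D(s)\cong D(s^*s)$ and $D(\p(s))\cong D(\p(s^*s))$ by the domain map, $\mathcal G(\p)|_{D(s)}$ is exactly the restriction of the Stone-dual homeomorphism. As the sets $D(s)$ form a basis, this yields at once that $\mathcal G(\p)$ is continuous and open, and its restriction to the unit space is the homeomorphism above, so it is iso-unital.

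Finally I would verify naturality. For $\varepsilon$ the square collapses to a single identity, since both $\Gamma_c(\mathcal G(\p))(\varepsilon_S(s))$ and $\varepsilon_T(\p(s))$ equal $D(\p(s))$ by the computation $\mathcal G(\p)(D(s))=D(\p(s))$ above. For $\eta$, given $\psi\colon\mathscr G\to\mathscr H$ and $g\in\mathscr G$, I would pick a compact open bisection $U\ni g$ and use $\psi(U)$ as a bisection representing $\eta_{\mathscr H}(\psi(g))$; unwinding the two composites then reduces the claim to the equality of characters $\wh{\chi_{\dom(g)}}=\chi_{\dom(\psi(g))}$ on compact open subsets of $\mathscr H\skel 0$, which follows from $\psi(\dom(g))=\dom(\psi(g))$ and the fact that $\psi|_{\mathscr G\skel 0}$ is the homeomorphism Stone-dual to $\Gamma_c(\psi)|_{E(\Gamma_c(\mathscr G))}$. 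I expect this last naturality computation for $\eta$ to be the main obstacle, as it is the one place where one must reconcile the concrete groupoid $\mathscr H$ with its reconstruction $\mathcal G(\Gamma_c(\mathscr H))$ and carefully track the characters $\chi_x$ through the dual of $\Gamma_c(\psi)$; the remaining verifications are routine bookkeeping built on Theorem~\ref{t:duality}.
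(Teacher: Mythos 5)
Your overall architecture is the same as the paper's: establish that $\Gamma_c$ and $\mathcal G$ are functorial on the restricted morphism classes, then show that $\eta$ and $\varepsilon$ from Theorem~\ref{t:duality} are natural, and conclude since their components are isomorphisms. Your treatment of $\mathcal G$ on morphisms and of both naturality squares is sound, and your route to continuity and openness of $\mathcal G(\p)$ is actually a nice variant: the paper proves continuity by computing $\mathcal G(\p)^{-1}(D(t))=\bigcup_{\p(s)\leq t}D(s)$ directly, whereas you prove $\mathcal G(\p)(D(s))=D(\p(s))$ and identify $\mathcal G(\p)|_{D(s)}$, via the domain homeomorphisms $D(s)\cong D(s^*s)$ and $D(\p(s))\cong D(\p(s^*s))$, with a restriction of the Stone-dual homeomorphism; since the $D(s)$ cover $\mathcal G(S)$, this gives continuity, openness, and the local-homeomorphism property all at once.

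There is, however, a genuine gap in your verification that $\Gamma_c$ is a functor, at exactly the point where iso-unitality is indispensable. First, injectivity of $\p|_U$ does not by itself imply that $\p(U)$ is a bisection: one needs that $\dom(\p(g_1))=\dom(\p(g_2))$ forces $\dom(g_1)=\dom(g_2)$, which is iso-unitality, not injectivity of $\p|_U$. Second, and more seriously, the inclusion $\p(U)\p(V)\subseteq \p(UV)$ is \emph{not} ``immediate from $\p$ being a functor'': if $h=\p(g_1)\p(g_2)$ with $g_1\in U$, $g_2\in V$, functoriality says nothing, because $g_1g_2$ need not exist in $\mathscr G$ --- composability in $\mathscr H$ only gives $\dom(\p(g_1))=\ran(\p(g_2))$, and one needs iso-unitality to conclude $\dom(g_1)=\ran(g_2)$. (Only $\p(UV)\subseteq\p(U)\p(V)$ follows from functoriality.) Indeed the claim fails without iso-unitality: for the continuous open functor collapsing the two-unit groupoid $\{x,y\}$ to a one-unit groupoid, with $U=\{x\}$ and $V=\{y\}$, one has $UV=\emptyset$ but $\p(U)\p(V)\neq\emptyset$, so $\Gamma_c(\p)$ would not even be multiplicative. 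The paper spells out both of these steps using iso-unitality; as written, your proposal attributes them to insufficient reasons, so the functoriality of $\Gamma_c$ --- the heart of why this morphism class makes the theorem work --- is not established. The fix is short: insert the iso-unitality argument in each of the two places.
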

\begin{proof}
We begin by showing that $\Gamma_c$ is a functor.  Let $\p\colon \mathscr G\to \mathscr H$ be an open iso-unital functor.
Let $U\in \Gamma_c(\mathscr G)$.  Then $\p(U)$ is compact open; we just need to show that it is a bisection.  Let $g_1,g_2\in U$ with $\dom(\p(g_1))=\dom(\p(g_2))$.  Then  $\dom(g_1)=\dom(g_2)$, as $\p$ is iso-unital, and so $g_1=g_2$ because $U$ is a bisection.  Thus $\dom|_{\p(U)}$ is injective and the same argument applies to $\ran|_{\p(U)}$.  We conclude that $\p(U)\in \Gamma_c(\mathscr G)$.  Next we check that $\p(UV)=\p(U)\p(V)$ for $U,V\in \Gamma_c(\mathscr G)$.  If $g\in UV$, then $g=g_1g_2$ with $g_1\in U$ and $g_2\in V$, and so  $\p(g)=\p(g_1)\p(g_2)\in \p(U)\p(V)$.  Thus $\p(UV)\subseteq \p(U)\p(V)$.  Conversely, suppose that $h\in \p(U)\p(V)$.  So $h=\p(g_1)\p(g_2)$ with $g_1\in U$ and $g_2\in V$. Since $\p$ is iso-unital, this implies $\dom(g_1)=\ran(g_2)$ and so $h=\p(g_1g_2)$ with $g_1g_2\in UV$.  Thus $\p(UV)=\p(U)\p(V)$ as required.
Observe that since $\p|_{\mathscr G\skel 0}\colon \mathscr G\skel 0\to \mathscr H\skel 0$ is a homeomorphism and $E(\Gamma_c(\mathscr G))$ is the set of compact open subsets of $\mathscr G\skel 0$ and $E(\Gamma_c(\mathscr H))$ is the set of compact open subsets of $\mathscr H\skel 0$, it follows that $\Gamma_c(\p)$ is idempotent bijective.
It is clear from the definition that $\Gamma_c$ is functorial.

Next we show that if $\p\colon S\to T$ is an idempotent bijective homomorphism of Boolean inverse semigroups, then $\mathcal G(\p)$ is a well-defined, open iso-unital functor.   To ease notation, we write $\p_*$ for $\mathcal G(\p)$.  To check that $\p_*$ is well defined, suppose that $u\leq s,s'$ with $\chi(u^*u)=1$.  Then $\p(u)\leq \p(s),\p(s')$ and $\chi\circ (\p_{E(S)})^{-1}(\p(u)^*\p(u)) = \chi(u^*u)=1$.  Thus $[\p(s),\chi\circ (\p_{E(S)})^{-1}]=[\p(s'),\chi\circ (\p_{E(S)})^{-1}]$.  It's immediate from $\p$ being a homomorphism that $\p_*$ is a functor.  It is clearly bijective on unit spaces as $\chi\mapsto \chi\circ (\p_{E(S)})^{-1}$ is a bijection of Stone spaces.  It remains to show that $\p_*$ is open and continuous.  For continuity, we claim that $\p_*\inv(D(t)) = \bigcup_{\p(s)\leq t} D(s)$.  Indeed, if $\p(s)\leq t$ and $\chi(s^*s)=1$, then $\p_*([s,\chi]) = [\p(s),\chi\circ (\p_{E(S)})\inv] = [t,\chi\circ (\p|_{E(S)})\inv]\in D(t)$.  Conversely, if $\p_\ast([s,\chi])=[\p(s),\chi\circ (\p_{E(S)})^{-1}]\in D(t)$, then there exists $u\leq \p(s),t$ with $\chi\circ (\p_{E(S)})^{-1}(u^*u)=1$.  Put $e=(\p_{E(S)})^{-1}(u^*u)\leq (\p_{E(S)})^{-1}(\p(s)^*\p(s))=s^*s$ and let $s_0=se$.  Then $\p(s_0) = \p(s)\p(e)= \p(s)u^*u=u\leq t$ and $\chi(s_0^*s_0) = \chi(e) = \chi\circ (\p_{E(S)})^{-1}(u^*u)=1$.  Thus $[s,\chi] = [s_0,\chi]\in D(s_0)$ with $\p(s_0)\leq t$.   To show that $\p_*$ is open, we claim that $\p_*(D(s)) = D(\p(s))$.  From what we have just shown, $\p_*(D(s))\subseteq D(\p(s))$.  Suppose that $[\p(s),\lambda]\in D(\p(s))$. Let $\chi  =\lambda\circ \p|_{E(S)}$.  Then $\chi(s^*s) = \lambda(\p(s^*s)) = 1$ and so $[s,\chi]\in D(s)$.  Moreover, $\p_*([s,\chi]) = [\p(s), \chi\circ (\p|_{E(S)})^{-1}] = [\p(s),\lambda]$, as required.  It is clear that if $\p,\psi$ are composable idempotent bijective homomorphisms, then $\mathcal G(\p\psi) = \mathcal G(\p)\circ\mathcal G(\psi)$.

It now remains to show that $\eta$ and $\varepsilon$ from Theorem~\ref{t:duality} are natural isomorphisms.  Let $\p\colon \mathscr G\to \mathscr H$ be an open iso-unital functor.  Let $g\in \mathscr G$ and $U$ be a compact open bisection containing $g$.  Note that $\p(U)$ is a compact open bisection containing $\p(g)$ and $\mathcal G(\Gamma_c(\p))(\eta_{\mathscr G}(g)) = \mathcal G(\Gamma_c(\p))[U,\chi_{\dom(g)}] = [\p(U),\chi_{\dom(\p(g))}] = \eta_{\mathscr H}(\p(g))$.  Next we check that $\varepsilon$ is a natural transformation. If $\p\colon S\to T$ is an idempotent bijective homomorphism of inverse semigroups, then $\mathcal G(\p)(\varepsilon_S(s)) = \mathcal G(\p)(D(s)) = D(\p(s))=\varepsilon_T(\p(s))$ by the computation of the previous paragraph. This completes the proof.
\end{proof}

We next want to show that our functors preserve injective and surjective maps.  Presumably, there is a categorical description of these morphisms in our categories but we give a direct proof.

\begin{Prop}\label{p:preserve.epimono}
The functors $\Gamma_c$ and $\mathcal G$ preserve injective and surjective morphisms.
\end{Prop}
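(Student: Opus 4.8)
The plan is to check the four assertions one at a time---$\Gamma_c$ preserves injections, $\Gamma_c$ preserves surjections, $\mathcal G$ preserves injections, $\mathcal G$ preserves surjections---and three of these are formal. For $\Gamma_c$ and injections: if $\p\colon\mathscr G\to\mathscr H$ is an injective open iso-unital functor and $\p(U)=\p(V)$ for $U,V\in\Gamma_c(\mathscr G)$, then applying the set-theoretic inverse image of the injection $\p$ gives $U=\pinv(\p(U))=\pinv(\p(V))=V$. For $\mathcal G$ and surjections: writing $\theta=\p|_{E(S)}$, which is bijective and hence a Boolean algebra isomorphism, I would lift a germ $[t,\lambda]$ by choosing $s$ with $\p(s)=t$ and setting $\chi=\lambda\circ\theta$; this is a character of $E(S)$ with $\chi(s^*s)=\lambda(\p(s^*s))=\lambda(t^*t)=1$, so $[s,\chi]$ is a germ and $\mathcal G(\p)([s,\chi])=[\p(s),\chi\circ\theta\inv]=[t,\lambda]$.

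For $\mathcal G$ and injections, suppose $\p$ is injective and $\mathcal G(\p)([s,\chi])=\mathcal G(\p)([s',\chi'])$. Comparing units through the bijection $\theta$ forces $\chi=\chi'$, and the definition of germ equality produces $v\le\p(s),\p(s')$ with $(\chi\circ\theta\inv)(v^*v)=1$. Since $\theta$ is onto $E(T)$, I can write $v=\p(s)\theta(e)=\p(se)$ and $v=\p(s')\theta(e')=\p(s'e')$ for suitable $e,e'\in E(S)$; injectivity of $\p$ then collapses these to a single element $u=se=s'e'$ with $u\le s,s'$ and $\chi(u^*u)=(\chi\circ\theta\inv)(v^*v)=1$, whence $[s,\chi]=[s',\chi']$.

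The one substantive point is that $\Gamma_c$ preserves surjections. Given a surjective open iso-unital $\p\colon\mathscr G\to\mathscr H$ and $W\in\Gamma_c(\mathscr H)$, I must produce a compact open bisection $U$ of $\mathscr G$ with $\p(U)=W$; once this is done, $\p|_U$ is automatically a homeomorphism onto $W$ by Proposition~\ref{p:local.homeo}. Every $h\in W$ has a preimage by surjectivity, and since $\p$ is open, hence a local homeomorphism by Proposition~\ref{p:local.homeo}, and $\mathscr G$ is ample, I can choose a compact open bisection $V_h$ containing that preimage with $\p|_{V_h}$ a homeomorphism onto the open set $\p(V_h)$, shrinking $V_h$ so that $\p(V_h)\subseteq W$. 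The sets $\p(V_h)$ form an open cover of the compact set $W$, so finitely many $\p(V_1),\dots,\p(V_n)$ suffice.

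The step I expect to be the main obstacle is assembling these local pieces into a single bisection, and the device is to disjointify before lifting. Working in the Boolean algebra of compact open subsets, set $W_i=\p(V_i)\setminus(\p(V_1)\cup\cdots\cup\p(V_{i-1}))$; these are pairwise disjoint compact open sets with union $W$ and $W_i\subseteq\p(V_i)$. Lifting through the homeomorphisms gives compact open bisections $U_i=(\p|_{V_i})\inv(W_i)\subseteq V_i$ with $\p(U_i)=W_i$, and I would set $U=U_1\cup\cdots\cup U_n$. Then $U$ is compact open with $\p(U)=W$, and it is a bisection: if $g\in U_i$ and $g'\in U_j$ have equal domains (respectively ranges), then iso-unitality together with $W$ being a bisection give $\p(g)=\p(g')\in W_i\cap W_j$, forcing $i=j$, after which injectivity of $\p$ on the bisection $V_i$ yields $g=g'$. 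This is the only place where more than a formal manipulation is required.
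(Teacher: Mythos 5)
Your proof is correct, and on the one substantive point---that $\Gamma_c$ preserves surjections---it follows essentially the paper's route: cover the compact open bisection $W$ by finitely many compact open sets over which $\p$ admits a lift (using that $\p$ is a local homeomorphism that is injective on bisections, Proposition~\ref{p:local.homeo}), disjointify inside the Boolean algebra of compact open subsets of $W$, and glue. The differences are organizational. First, the paper disjointifies by passing to the atoms of the finite Boolean algebra generated by the covering sets, whereas you use the sequential differences $W_i=\p(V_i)\setminus\bigl(\p(V_1)\cup\cdots\cup\p(V_{i-1})\bigr)$; both are valid, and both rest on the same observation (stated explicitly in the paper, implicit in your write-up) that $W$ is compact \emph{Hausdorff}, being homeomorphic to $\dom(W)\subseteq\mathscr H\skel 0$, so that such differences are again compact open. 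Second, the paper packages the lift as a single continuous section $s\colon V\to\mathscr G$ and sets $U=s(V)$, which has the side benefit of establishing Remark~\ref{rmk:compact.section} (a continuous section of $\p$ exists over every compact open bisection of $\mathscr H$), a fact the paper reuses in Proposition~\ref{p:section.sigma.compact}; your direct union of lifts $U=U_1\cup\cdots\cup U_k$ yields the same set and, since $\p|_U$ is an open continuous bijection onto $W$, the same section. Third, for the three formal parts the paper routes both injectivity statements through the kernel criterion of Lemma~\ref{l:injective}, while you argue directly---via set-theoretic inverse images for $\Gamma_c$, and by producing the common restriction $u=se=s'e'$ below $s$ and $s'$ for $\mathcal G$; your element chases are complete and correct, just bypassing the lemma the paper leans on.
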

\begin{proof}
Let $\p\colon \mathscr G\to \mathscr H$ be an open iso-unital functor.  Suppose first that $\p$ is injective. Then, for $U\in\Gamma_c(\mathscr G)$, we have that $\p(U)\in E(\Gamma_c(\mathscr H))$ implies $\p(U)\subseteq \mathscr H\skel 0$ and hence $U\subseteq \mathscr G\skel 0$ by Lemma~\ref{l:injective}.  Thus $U\in E(\Gamma_c(\mathscr G))$ and so we conclude $\Gamma_c(\p)$ is injective by another application of Lemma~\ref{l:injective}.

Next suppose that $\p$ is a surjective  and let $V\in \Gamma_c(\mathscr H)$.  Note that $V$ is compact Hausdorff, being homeomorphic to the compact Hausdorff space $\dom(V)$. We claim that, for each $h\in V$, there is a compact open neighborhood $V_h\subseteq V$ of $h$ such that there is a continuous section $s\colon V_h\to \mathscr G$ of $\p$.  Choose $g\in \mathscr G$ with $\p(g)=h$.  Then since $\pinv(V)$ is open and the compact open bisections form a basis for the topology of $\mathscr G$, as $\mathscr G$ is ample, we can find a compact open bisection $U_g$ with $g\in U_g$ and $\p(U_g)\subseteq V$.  Then $V_h=\p(U_g)$ is compact open containing $h$ (since $\p$ is open).  Moreover, since $\p$ is iso-unital and $U_g$ is a bisection, $\p|_{U_g}$ is injective by Proposition~\ref{p:local.homeo}. Then $s=(\p|_{U_g})\inv$ is our desired section, as $\p$ is open.  Since $V$ is compact, we can cover $V$ by finitely many compact open subsets $V_1,\ldots, V_n$ such that there is a continuous section $s_i\colon V_i\to \mathscr G$ of $\p$.  Since $V$ is compact Hausdorff, its compact open subsets form a Boolean algebra.  The finitely many compact open subsets $V_1,\ldots, V_n$ of $V$ generate a finite Boolean algebra with maximum element $V=\bigcup_{i=1}^nV_i$.  Let $W_1,\ldots, W_k$ be the atoms of this Boolean algebra.  Then the $W_i$ are pairwise disjoint.  Also since $W_i=V\cap W_i=(V_1\cap W_i)\cup \cdots \cup (V_n\cap W_i)$, we see using $W_i$ is an atom that $W_i\subseteq V_j$ for some $j$.  Hence there is a  continuous section $t_i=s_j|_{W_i}\colon W_i\to \mathscr G$ of $\p$.  Since the maximum of a finite Boolean algebra is the join of its atoms, we deduce that $V=W_1\cup\cdots\cup W_k$ and this is a disjoint union.  Hence we can define a continuous section $s\colon V\to \mathscr G$ by putting $s|_{W_i}=t_i$ for $i=1,\ldots, k$.  Note that since $\p$ is a local homeomorphism by Proposition~\ref{p:local.homeo}, every continuous section of $\p$ defined on an open subset of $\mathscr H$ is an open mapping. Put $U=s(V)$ and note that $U$ is compact open and $\p(U)=V$.  Moreover, $\p|_U$ is injective as $\p\circ s=1_V$.    We claim that $U\in \Gamma_c(\mathscr G)$.    If $g,g'\in U$ with $\dom(g)=\dom(g')$, then $\dom(\p(g))=\dom(\p(g'))$ and so $\p(g)=\p(g')$ since $V$ is a bisection.  Therefore, $g=g'$ as $\p|_U$ is injective.  Similarly,  $\ran|_U$ is injective and so $U\in \Gamma_c(\mathscr G)$.  Thus $\Gamma_c(\p)$ is surjective.

Suppose now that $\p\colon S\to T$ is an idempotent bijective homomorphism of Boolean inverse semigroups.  First assume that $\p$ is injective and suppose that $[\p(s),\chi\circ (\p|_{E(S)})\inv]=\mathcal G(\p)([s,\chi])\in \mathcal G(T)\skel 0$.  Then there is an idempotent $f\in E(T)$ with $f\leq \p(s)$ and $\chi\circ (\p|_{E(S)})\inv (f)=1$.  Putting $e=(\p|_{E(S)})\inv(f)$, we have that $\chi(e)=1$ and $\p(se)=\p(s)f=f$.  Since $\pinv(E(T))=E(S)$ by Lemma~\ref{l:injective}, we have that $se\in E(S)$ and $se\leq s$.  Moreover, $\chi(es^*se) =1$ and so $[s,\chi]=[se,\chi]\in \mathcal G\skel 0$.  Therefore, $\mathcal G(\p)$ is injective by Lemma~\ref{l:injective}.  Next assume that $\p$ is surjective and let $[t,\chi]\in \mathcal G(T)$.  Then $t=\p(s)$ for some $s\in S$ and $\chi\circ \p|_{E(S)}(s^*s)=\chi(t^*t)=1$.  Thus $[s,\chi\circ \p|_{E(S)}]\in \mathcal G(S)$ and  we have $\mathcal G(\p)([s,\chi\circ \p|_{E(S)}]) = [\p(s),\chi]=[t,\chi]$.  Therefore, $\mathcal G(\p)$ is surjective.
\end{proof}

\begin{Rmk}\label{rmk:compact.section}
The proof of Proposition~\ref{p:preserve.epimono} shows that if $\p\colon \mathscr G\to \mathscr H$ is a surjective open iso-unital functor between ample groupoids, then a continuous local section of $\p$ can be defined on any compact open bisection of $\mathscr H$.
\end{Rmk}

Note that since $\Gamma_c$ and $\mathcal G$ are equivalences of categories and isomorphisms in these categories are bijective, it follows that $\p\colon \mathscr G\to \mathscr H$ is injective (respectively, surjective) if and only if $\Gamma_c(\p)\colon \Gamma_c(\mathscr G)\to \Gamma_c(\mathscr H)$ is injective (respectively, surjective) and $\p\colon S\to T$ is injective (respectively, surjective) if and only if $\mathcal G(\p)$ is injective (respectively, surjective).

The question of whether a surjective open iso-unital functor admits a continuous section preserving unit spaces (but not necessarily a functor) is important when determining whether a twist comes from a groupoid $2$-cocycle~\cite{twists}.  We show that in the ample setting, this is equivalent to the question considered in~\cite{Donsig} of when a surjective idempotent bijective inverse semigroup homomorphism admits an order-preserving and idempotent-preserving section.

The following is a minor variation of~\cite[Lemma~4.2]{Donsig}.
\begin{Lemma}\label{l:donsig}
Let $\p\colon S\to T$ be an idempotent bijective surjective homomorphism of inverse semigroups.  Then the following are equivalent.
\begin{enumerate}
\item There is an order-preserving map $j\colon T\to S$ with $j(E(T))\subseteq E(S)$ (i.e., $j$ is idempotent preserving) and $\p\circ j=1_T$.
\item There is a map $j\colon T\to S$ with $\p\circ j=1_T$ and $j(te)=j(t)j(e)$ for all $t\in T$ and $e\in E(T)$.
\item There is a map $j\colon T\to S$ with $\p\circ j=1_T$ and  $j(et)=j(e)j(t)$ for all $t\in T$ and $e\in E(T)$.
\end{enumerate}
\end{Lemma}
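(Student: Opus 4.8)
The plan is to treat condition (1) as a hub and prove $(2)\Rightarrow(1)\Rightarrow(2)$ together with $(3)\Rightarrow(1)\Rightarrow(3)$; in fact each implication will be witnessed by the \emph{same} section $j$, so the real content is that any one of the three conditions, imposed on a given set-theoretic section, forces the other two. Throughout I would exploit that $\p$ is idempotent bijective: the restriction $\p|_{E(S)}\colon E(S)\to E(T)$ is then a bijective homomorphism of commutative idempotent semigroups, hence a semilattice isomorphism, and I write $\theta=(\p|_{E(S)})^{-1}$ for its inverse, which is again a meet-preserving homomorphism. The one observation I would record at the outset is that any idempotent-preserving section must agree with $\theta$ on $E(T)$, since $\theta$ delivers the unique preimage of each idempotent.

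For the easy directions I would first handle $(2)\Rightarrow(1)$. Given $j$ with $\p\circ j=1_T$ and $j(te)=j(t)j(e)$, setting $t=e\in E(T)$ gives $j(e)=j(ee)=j(e)j(e)$, so $j(e)$ is idempotent and thus $j(E(T))\subseteq E(S)$. For order-preservation, if $t_1\leq t_2$ then $t_1=t_2e$ with $e=t_1^*t_1\in E(T)$ (as recalled in the excerpt), whence $j(t_1)=j(t_2e)=j(t_2)j(e)$ with $j(e)\in E(S)$, so $j(t_1)\leq j(t_2)$. Hence the same $j$ already witnesses (1). The implication $(3)\Rightarrow(1)$ is entirely dual, using the factorization $t_1=et_2$ with $e=t_1t_1^*$ together with $j(et)=j(e)j(t)$.

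The substantive step is $(1)\Rightarrow(2)$, and dually $(1)\Rightarrow(3)$. Starting from an order-preserving idempotent-preserving section $j$, I note $j|_{E(T)}=\theta$. Fix $t\in T$ and $e\in E(T)$. Since $te\leq t$, order-preservation yields $j(te)\leq j(t)$, so $j(te)=j(t)f$ with $f=j(te)^*j(te)\in E(S)$. Applying $\p$ gives $\p(f)=(te)^*(te)=t^*te$, and because $f$ is idempotent it is recovered as $f=\theta(\p(f))=\theta(t^*t)\theta(e)=j(t)^*j(t)\,j(e)$, using multiplicativity of $\theta$ and $\theta(t^*t)=j(t)^*j(t)$. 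Substituting back gives $j(te)=j(t)j(t)^*j(t)j(e)=j(t)j(e)$, which is exactly (2); the dual computation, writing $j(et)=gj(t)$ with $g=j(et)j(et)^*$ and $\p(g)=ett^*$, yields (3).

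I expect the only genuine obstacle to be this last step: one must pin down the ``correction idempotent'' $f$ precisely, and this is exactly where idempotent bijectivity is indispensable, since it lets me identify $f$ with the canonical preimage $\theta(t^*te)$ and then factor it as $j(t)^*j(t)\,j(e)$ via multiplicativity of $\theta$. Everything else is formal manipulation with the natural partial order and its two symmetric factorizations $s=t\,s^*s$ and $s=ss^*\,t$ for $s\leq t$.
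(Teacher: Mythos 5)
Your proof is correct and follows essentially the same route as the paper: the easy direction $(2)\Rightarrow(1)$ (and its dual $(3)\Rightarrow(1)$) via $j(e)=j(e)j(e)$ and the factorization $t_1=t_2e$, and the substantive direction $(1)\Rightarrow(2)$ by identifying the correction idempotent $j(te)^*j(te)$ with $j(t)^*j(t)j(e)$ through idempotent bijectivity and then using $j(te)\leq j(t)$. Your explicit use of $\theta=(\p|_{E(S)})^{-1}$ is just a notational repackaging of the paper's argument that two idempotents with the same image $t^*te$ under $\p$ must coincide.
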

\begin{proof}
We prove the equivalence of the first and second items as the equivalence of the first and third items is dual.  Suppose first that $j$ as in (2) exists.  Then if $e\in E(T)$, we have that $j(e) = j(ee)=j(e)j(e)$ and so $j$ preserves idempotents.   Also, if $t_1\leq t_2$, we may write $t_1=t_2e$ with $e\in E(T)$.  Therefore, $j(t_1)=j(t_2e) = j(t_2)j(e)\leq j(t_2)$, as $j$ preserves idempotents, and so $j$ is order preserving.  Thus (2) implies (1).

Suppose that $j$ is as in (1).   Let $t\in T$ and $e\in E(T)$.   First we claim that $j(t)^*j(t)j(e)=j(te)^*j(te)$.  Indeed, applying $\p$ to both sides yields $t^*te$. Consequently, $j(t)^*j(t)j(e)=   j(te)^*j(te)$ as $j(e)\in E(S)$ and $\p$ is idempotent bijective.  Since $te\leq t$, the fact that $j$ is order preserving implies that $j(te)\leq j(t)$ and so
\[j(te)=j(t)j(te)^*j(te) = j(t)j(t)^*j(t)j(e)=j(t)j(e)\] as required.
\end{proof}

\begin{Prop}\label{p:section}
Let $\p\colon \mathscr G\to \mathscr H$ be a surjective open iso-unital functor between ample groupoids.  There there is a continuous mapping $f\colon \mathscr H\to \mathscr G$ with $\p\circ f=1_{\mathscr H}$ and $f(\mathscr H\skel 0)\subseteq \mathscr G\skel 0$ if and only if there is an order-preserving and idempotent-preserving mapping $j\colon \Gamma_c(\mathscr H)\to \Gamma_c(\mathscr G)$ with $\Gamma_c(\p)\circ j=1_{\Gamma_c(\mathscr H)}$.
\end{Prop}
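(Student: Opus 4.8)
The plan is to prove the two implications separately. The forward direction is essentially a direct transport of the section along $\p$, while the reverse direction is a gluing argument whose only delicate point is well-definedness, and that is exactly where order-preservation enters.

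For the direction ($\Rightarrow$), suppose we are given a continuous section $f\colon \mathscr H\to\mathscr G$ with $\p\circ f=1_{\mathscr H}$ and $f(\mathscr H\skel 0)\subseteq\mathscr G\skel 0$. First I would record that $f$ is an open map: since $\p$ is a local homeomorphism by Proposition~\ref{p:local.homeo}, any continuous section of $\p$ defined on an open set is open (this was already used in the proof of Proposition~\ref{p:preserve.epimono}), and $\mathscr H$ is open in itself. Then for $V\in\Gamma_c(\mathscr H)$ I claim $f(V)\in\Gamma_c(\mathscr G)$: it is compact as the continuous image of a compact set, open because $f$ is open, and a bisection because if $g_1,g_2\in f(V)$ satisfy $\dom(g_1)=\dom(g_2)$, then writing $g_i=f(h_i)$ with $h_i\in V$ and applying $\p$ gives $\dom(h_1)=\dom(h_2)$, so $h_1=h_2$ and hence $g_1=g_2$; the argument for $\ran$ is dual. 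Setting $j(V)=f(V)$ gives $\Gamma_c(\p)(j(V))=\p(f(V))=V$, so $\Gamma_c(\p)\circ j=1_{\Gamma_c(\mathscr H)}$. Moreover $j$ is order-preserving because the order is containment and $f$ respects inclusions, and idempotent-preserving because $f(\mathscr H\skel 0)\subseteq\mathscr G\skel 0$ forces $j$ to send compact open subsets of $\mathscr H\skel 0$ to compact open subsets of $\mathscr G\skel 0$.

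For the direction ($\Leftarrow$), suppose we are given an order- and idempotent-preserving section $j\colon\Gamma_c(\mathscr H)\to\Gamma_c(\mathscr G)$ of $\Gamma_c(\p)$. For each $V\in\Gamma_c(\mathscr H)$ the restriction $\p|_{j(V)}\colon j(V)\to\p(j(V))=V$ is a continuous open bijection: injective by Proposition~\ref{p:local.homeo}, surjective by the identity $\Gamma_c(\p)\circ j=1_{\Gamma_c(\mathscr H)}$, and open because $\p$ is open; hence it is a homeomorphism. I would then define $f(h)=(\p|_{j(V)})\inv(h)$ for any compact open bisection $V$ containing $h$. The crux is independence of the choice of $V$. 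If $h\in V_1\cap V_2$, then since the compact open bisections form a basis for the topology of $\mathscr H$ I may choose $V\in\Gamma_c(\mathscr H)$ with $h\in V\subseteq V_1\cap V_2$. Order-preservation gives $j(V)\subseteq j(V_1)\cap j(V_2)$, so the element $(\p|_{j(V)})\inv(h)$ lies in both $j(V_1)$ and $j(V_2)$ and maps under $\p$ to $h$; by uniqueness within each bisection it equals $(\p|_{j(V_i)})\inv(h)$ for $i=1,2$, proving well-definedness.

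Finally I would verify the three properties of $f$. Continuity is local: on any $V\in\Gamma_c(\mathscr H)$ we have $f|_V=(\p|_{j(V)})\inv$, a homeomorphism, and these $V$ cover $\mathscr H$. The identity $\p\circ f=1_{\mathscr H}$ is immediate from the definition. For unit preservation, given $x\in\mathscr H\skel 0$ I would pick a compact open $V\subseteq\mathscr H\skel 0$ with $x\in V$ (an idempotent bisection); since $j$ preserves idempotents, $j(V)\subseteq\mathscr G\skel 0$, and therefore $f(x)=(\p|_{j(V)})\inv(x)\in j(V)\subseteq\mathscr G\skel 0$. I expect the only real obstacle to be the well-definedness of $f$ in the reverse direction, everything else being routine transport along Proposition~\ref{p:local.homeo}. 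It is worth emphasizing that the construction uses only that $j$ is order- and idempotent-preserving, not that it is a homomorphism, which matches the hypothesis and reflects the fact that the continuous section $f$ is required to preserve unit spaces but need not be a functor.
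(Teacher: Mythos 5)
Your proof is correct, and while the forward direction coincides with the paper's (transport $V\mapsto f(V)$, using that a continuous section of the local homeomorphism $\p$ is open), your converse takes a genuinely different route. The paper invokes the equivalence of categories (Theorem~\ref{t:equiv.cats}) to reduce to the germ-groupoid picture: given an order- and idempotent-preserving section $j\colon T\to S$ of Boolean inverse semigroups, it defines $f([t,\chi])=[j(t),\chi\circ\p]$ on $\mathcal G(T)$ and verifies continuity via the identity $f\inv(D(s))=\bigcup_{j(t)\leq s}D(t)$, whose proof needs the multiplicativity $j(te)=j(t)j(e)$ from Lemma~\ref{l:donsig}. You instead stay on the original groupoids and glue the homeomorphisms $(\p|_{j(V)})\inv\colon V\to j(V)$ over the basis of compact open bisections, with order-preservation ensuring agreement on overlaps (shrink to $V\subseteq V_1\cap V_2$ and use injectivity of $\p$ on each bisection, Proposition~\ref{p:local.homeo}) and idempotent-preservation giving unit preservation. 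Your argument is more elementary: it avoids both the categorical transfer and Lemma~\ref{l:donsig}, makes continuity a purely local statement, and isolates exactly which hypothesis on $j$ is responsible for which property of $f$. What the paper's route buys in exchange is uniformity with the rest of the section --- everything is funneled through the germ model $\mathcal G(S)$, and the continuity computation exercises the semigroup-theoretic Lemma~\ref{l:donsig}, which is the bridge to the Donsig--Fuller--Pitts circle of ideas used elsewhere in the paper.
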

\begin{proof}
Suppose first that $f$ exists.  Then since $\p$ is a local homeomorphism by Proposition~\ref{p:local.homeo}, the mapping $f$ is open.  Let $U\in \Gamma_c(\mathscr H)$ and put $j(U)=f(U)$.  Then $j(U)$ is compact open.  It is also a bisection since if $f(h),f(h')\in f(U)=j(U)$ with $\dom(f(h))=\dom(f(h'))$ and $h,h'\in U$, then $\dom(h)=\dom(h')$ because $\p$ is a functor and $\p\circ f=1_{\mathscr H}$.  Thus $h=h'$ because $U$ is a bisection.  Similarly, $\ran|_{j(U)}$ is injective and so $j(U)\in \Gamma_c(\mathscr G)$.  Obviously, $j$ is order preserving and preserves idempotents since $f(\mathscr H\skel 0)\subseteq \mathscr G\skel 0$.

For the converse, it suffices by Theorem~\ref{t:equiv.cats} to show that if $\p\colon S\to T$ is a surjective idempotent bijective homomorphism of Boolean inverse semigroups admitting an order-preserving section $j\colon T\to S$ with $j(E(T))\subseteq E(S)$, then $\mathcal G(\p)\colon \mathcal G(S)\to \mathcal G(T)$ admits a continuous section $f$ preserving units.  Put $f([t,\chi]) =[j(t),\chi\circ \p]$.  This is well defined since if $[t,\chi]=[t',\chi]$, then we can find $u\leq t,t'$ with $\chi(u^*u)=1$ and so $j(u)\leq j(t),j(t')$ as $j$ is order preserving and $\chi(\p(j(u)^*j(u))) = \chi(u^*u)=1$.  Thus $[j(t),\chi\circ \p]=[j(t'),\chi\circ \p]$.  Trivially, $\mathcal G(\p)(f([t,\chi]))= \mathcal G(\p)([j(t),\chi\circ \p])= [\p(j(t)),\chi]=[t,\chi]$.  Also if $[e,\chi]\in \mathcal G(T)\skel 0$ with $e\in E(T)$, then $f([e,\chi]) = [j(e),\chi\circ \p]\in \mathcal G(S)\skel 0$ since $j(e)\in E(S)$.

For continuity, we claim that $f\inv(D(s)) = \bigcup_{j(t)\leq s}D(t)$.  Indeed, if $j(t)\leq s$, then $f([t,\chi]) = [j(t),\chi\circ\p]=[s,\chi\circ \p]\in D(s)$.  Conversely, if $[j(t),\chi\circ \p]=f([t,\chi])\in D(s)$, then there exists $u\leq j(t),s$ with $\chi(\p(u^*u))=1$.  Put $e=u^*u$ and note that $j(t)e=u=se$.
Then $\p(u)\leq \p(j(t))=t$ and $\chi(\p(u)^*\p(u)))=1$ and so $[t,\chi]=[\p(u),\chi]\in D(\p(u))$.  Since $\p$ is idempotent bijective and $j$ is idempotent-preserving, we must have that $j(\p(e))=e$. Thus $t\p(e)=\p(j(t)u^*u) = \p(u)$, and so $j(\p(u))=j(t\p(e)) = j(t)j(\p(e))=j(t)e=se\leq s$ by Lemma~\ref{l:donsig}.  This completes the proof.
\end{proof}

It is shown in~\cite[Proposition~4.6]{Donsig} that if $\p\colon S\to T$ is an idempotent bijective surjective homomorphism of Boolean inverse monoids whose idempotents form a complete Boolean algebra, then an order-preserving and idempotent-preserving section always exists.  In the language of ample groupoids, these conditions mean that the unit spaces of $\mathcal G(S)$ and $\mathcal G(T)$ are Stonean (compact and extremally disconnected).

In~\cite{twists} it was shown that twists over second countable Hausdorff ample groupoids admit a unit-preserving global section; in~\cite[Lemma~2.4]{twistedreconstruction} and the remark thereafter, it was observed that the same proof works for paracompact Hausdorff ample groupoids.  The following proposition generalizes the argument to open iso-unital functors.

\begin{Prop}\label{p:section.sigma.compact}
Let $\p\colon \mathscr G\to \mathscr H$ be a surjective open iso-unital functor between ample groupoids.  If $\mathscr H$ is Hausdorff and $\mathscr H\setminus \mathscr H\skel 0$ is paracompact, then there is a continuous section $s\colon \mathscr H\to \mathscr G$ with $s(\mathscr H\skel 0)\subseteq \mathscr G\skel 0$.
\end{Prop}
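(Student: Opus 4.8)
The plan is to build the section separately on the two clopen pieces $\mathscr H\skel 0$ and $\mathscr H\setminus\mathscr H\skel 0$ and to note that, because these are clopen and disjoint, the pieces glue automatically into a continuous map with no compatibility condition. Since $\mathscr H$ is Hausdorff and \'etale, $\mathscr H\skel 0$ is clopen: it is open because $\mathscr H$ is \'etale, and it is closed because it is the equalizer $\{g\mid g=\dom(g)\}$ of the two continuous maps $g\mapsto g$ and $g\mapsto \dom(g)=g\inv g$ into the Hausdorff space $\mathscr H$. Hence $Y_0:=\mathscr H\setminus\mathscr H\skel 0$ is clopen as well. On the unit space the section is forced: as $\p|_{\mathscr G\skel 0}$ is a homeomorphism onto $\mathscr H\skel 0$, I set $s=(\p|_{\mathscr G\skel 0})\inv$ there, which is continuous and sends units to units. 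It remains to construct a continuous section on $Y_0$, which is open in the ample groupoid $\mathscr H$, hence locally compact Hausdorff with a basis of compact open sets, and paracompact by hypothesis.

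First I would reduce to the $\sigma$-compact case. By the classical characterization of paracompactness for locally compact Hausdorff spaces, $Y_0$ is the topological disjoint sum $Y_0=\bigsqcup_\alpha Y_\alpha$ of $\sigma$-compact clopen subspaces. Since the $Y_\alpha$ are clopen and pairwise disjoint, a map $Y_0\to\mathscr G$ is continuous exactly when each restriction to $Y_\alpha$ is, so it suffices to produce a continuous section on each $Y_\alpha$ separately.

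Next, fix a $\sigma$-compact piece $Y$ and write $Y=\bigcup_n L_n$ with each $L_n$ compact. Using the basis of compact open sets, I build an increasing chain of compact open sets $C_1\subseteq C_2\subseteq\cdots$ with $\bigcup_n C_n=Y$, by covering the compact set $C_{n-1}\cup L_n$ by finitely many compact open sets and taking their union. Setting $D_n=C_n\setminus C_{n-1}$ (with $C_0=\varnothing$) partitions $Y$ into countably many pairwise disjoint compact open sets. Each $D_n$ is compact and $\mathscr H$ is ample, so I cover $D_n$ by finitely many compact open bisections and pass to the atoms of the finite Boolean algebra they generate inside $D_n$ — exactly the disjointification argument used in the proof of Proposition~\ref{p:preserve.epimono}. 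This refines the partition into countably many disjoint compact open sets $E_m$, each contained in a compact open bisection $V_m$ of $\mathscr H$. By Remark~\ref{rmk:compact.section} there is a continuous local section $\sigma_m\colon V_m\to\mathscr G$ of $\p$, and I define $s|_{E_m}=\sigma_m|_{E_m}$.

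Finally I would check continuity and assemble. As the $E_m$ partition $Y$ into disjoint open (hence clopen) sets, each $E_m$ is an open neighborhood of each of its points on which $s$ coincides with the continuous map $\sigma_m$; thus $s\colon Y\to\mathscr G$ is continuous and satisfies $\p\circ s=1_Y$. Combining these over all $Y_\alpha$ together with $(\p|_{\mathscr G\skel 0})\inv$ on $\mathscr H\skel 0$ yields the desired global continuous section with $s(\mathscr H\skel 0)\subseteq\mathscr G\skel 0$. The only genuine subtlety is the passage to $\sigma$-compact clopen summands supplied by paracompactness; once the base is totally disconnected and $\sigma$-compact, disjointifying into compact open pieces makes the local sections of Remark~\ref{rmk:compact.section} glue with no overlap conditions, which is precisely why no partition-of-unity-type argument is needed.
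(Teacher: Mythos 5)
Your proof is correct and follows essentially the same route as the paper's: the unit space is handled by the forced section $(\p|_{\mathscr G\skel 0})\inv$, paracompactness of $\mathscr H\setminus\mathscr H\skel 0$ gives the partition into clopen $\sigma$-compact pieces, each such piece is disjointified into countably many compact open bisections (using Hausdorffness so that differences of compact opens stay compact open), and Remark~\ref{rmk:compact.section} supplies the local sections, which glue freely over disjoint clopen sets. The only cosmetic difference is in the disjointification step: the paper covers the $\sigma$-compact piece directly by compact open bisections $U_n$ and takes $V_n=U_n\setminus\bigcup_{i<n}V_i$, whereas you first build a compact open exhaustion and then pass to atoms of finite Boolean algebras, which accomplishes the same thing with slightly more bookkeeping.
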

\begin{proof}
Since $\mathscr H$ is Hausdorff, we have that $\mathscr H\skel 0$ is clopen in $\mathscr H$.  Therefore, $\mathscr H'=\mathscr H\setminus \mathscr H\skel 0$ is clopen. First we show that every $\sigma$-compact clopen subset $X$ of $\mathscr H'$ admits a continuous section $s_X\colon X\to \mathscr G$. Since $X$ is a countable union of compact sets and each of these compact sets can be covered by finitely many compact open bisections of $\mathscr H$ contained in $X$, as $\mathscr H$ is ample and $X$ is clopen, we have that $X=\bigcup_{n=1}^\infty U_n$ with the $U_n$ compact open bisections of $\mathscr H$.
Put $V_1=U_1$ and $V_n = U_n\setminus \bigcup_{i=1}^{n-1}V_i$.  Then since $\mathscr H$ is Hausdorff, each $V_n$ is a compact open bisection and by construction $\bigcup_{n=1}^\infty V_n=\bigcup_{n=1}^\infty U_n=X$.  Moreover, the $V_n$ are pairwise disjoint.  By Remark~\ref{rmk:compact.section}, there is a continuous section $s_n\colon V_n\to \mathscr G$ of $\p$ and we can then define $s_X$ by $s_X|_{V_n}=s_n$ for $n\geq 1$.

Next we use the well-known fact that a locally compact Hausdorff space is paracompact if and only if it can be partitioned into clopen $\sigma$-compact subspaces to write $\mathscr H'=\coprod_{\alpha\in A}X_{\alpha}$ with $X_{\alpha}$ clopen and $\sigma$-compact.  We now define $s\colon \mathscr H\to \mathscr G$ by $s|_{\mathscr H\skel 0} = (\p|_{\mathscr G\skel 0})\inv$ and $s|_{X_{\alpha}}=s_{X_{\alpha}}$.
This completes the proof.
\end{proof}

\subsection{Extensions of inverse semigroups}
By an \emph{extension} of inverse semigroups we mean a sequence
\begin{equation}\label{eq:extension}
K\xrightarrow{\,\,\iota\,\,} T\xrightarrow{\,\,\p\,\,} S
\end{equation}
 of idempotent bijective homomorphisms with $\iota(K)=\ker \p=\pinv(E(S))$.  We call the extension \emph{abelian} if $K$ is commutative.

Abelian extensions are classified by Lausch's second cohomology group~\cite[Section~7]{lausch}.  We recall the setup.  Let $S$ be an inverse semigroup.  An \emph{$S$-module} consists of a commutative inverse semigroup $K$, an idempotent bijective homomorphism $p\colon K\to E(S)$ and a (total) left action of $S$ on $K$ (by endomorphisms) such that $p(sk) = sp(k)s^*$ and $p(k)k=k$ for all $k\in K$ and $s\in S$.  The category of $S$-modules is an abelian category with enough projectives and injectives and Lausch developed a corresponding cohomology theory based on the derived functors of the functor taking an $S$-module $p\colon K\to E(S)$ to the $S$-equivariant sections $q\colon E(S)\to K$ of $p$ (with respect to the conjugation action on $E(S))$~\cite{lausch}.  We shall only need the second cohomology group, which classifies extensions.  Note that Lausch uses right modules, so we have dualized his results here.

Given an extension \eqref{eq:extension} of $S$ by a commutative inverse semigroup $K$ we can define an $S$-module structure on $K$ by putting $p=\p\iota\colon K\to E(S)$, choosing a set theoretic section $j\colon S\to T$ and setting \[sk = \iota^{-1}(j(s)\iota(k)j(s)^*)\] for $s\in S$ and $k\in K$.  Note that $\iota(K)=\ker \p$ is a normal inverse subsemigroup of $T$ and so this makes sense. Lausch shows that the module structure is independent of the choice of $j$.  Also each element of $t\in T$ can be uniquely written as $t=\iota(k)j(s)$ with $s\in S$ and $k\in K$, namely, $s=\p(t)$ and $k =\iota\inv (tj(\p(t))^*)$.

Given an $S$-module $K$ with $p\colon K\to E(S)$, a \emph{$2$-cocycle} is a mapping $c\colon S\times S\to K$ satisfying the following properties:
\begin{enumerate}
  \item $p(c(s,t)) = stt^*s^*$;
  \item $(sc(t,u))c(s,tu) = c(s,t)c(st,u)$ for $s,t,u\in S$.
\end{enumerate}
We say that $c$ is \emph{normalized} if $c(e,e)\in E(K)$ for all $e\in E(S)$.  The trivial $2$-cocycle is defined by $c(s,t)=(p|_{E(K)})\inv(stt^*s^*)$. The $2$-cocycles form an abelian group under pointwise multiplication with the trivial cocycle as the identity and with inversion done pointwise: $c\inv(s,t) = c(s,t)^*$.  The group of $2$-cocycles will be denoted $Z^2(S,K)$.  The set of all mappings $F\colon S\to K$ with $p(F(s))=ss^*$ is an abelian group $C^1(S,K)$ under pointwise operations with identity $F(s) = (p|_{E(K)})\inv (ss^*)$.  There is a coboundary homomorphism $\delta\colon C^1(S,K)\to Z^2(S,K)$ given by $(\delta F)(s,t) = F(s)(sF(t))F(st)^*$ and the image $B^2(S,K)$ is the group of \emph{$2$-coboundaries}.  We put $H^2(S,K)=Z^2(S,K)/B^2(S,K)$ and call it the second Lausch cohomology group of $S$ with coefficients in $K$\footnote{In Lausch's cohomology theory he calls this $H^2(S^I,K^0)$ where $S^I$ and $K^0$ are obtained by adjoining identities (see~\cite[Section~7]{lausch}), but we avoid this extra notation for simplicity.}.

\begin{Prop}\label{p:normalized.cocycle}
Every $2$-cocycle is cohomologous to a normalized one.
\end{Prop}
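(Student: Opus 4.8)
The plan is to produce a $1$-cochain $F\in C^1(S,K)$ for which $c\cdot\delta F$ is normalized; since $c$ and $c\cdot\delta F$ represent the same class in $H^2(S,K)$, this will suffice. The guiding observation is that the normalization condition constrains a cocycle only on pairs $(e,e)$ with $e\in E(S)$, and that $(\delta F)(e,e)$ depends only on the single value $F(e)$. So I would first pin down $(\delta F)(e,e)$ and then choose $F(e)$ to cancel the idempotent-diagonal values of $c$.

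First I would compute $(\delta F)(e,e)$ for $e\in E(S)$. By definition $(\delta F)(e,e) = F(e)(eF(e))F(ee)^* = F(e)(eF(e))F(e)^*$. Since $p(F(e)) = ee^* = e$, the module axiom $p(k)k=k$ gives $eF(e) = p(F(e))F(e) = F(e)$. Hence $(\delta F)(e,e) = F(e)F(e)F(e)^*$, and because $K$ is commutative this collapses to $F(e)F(e)^*F(e) = F(e)$. Therefore $(c\cdot\delta F)(e,e) = c(e,e)F(e)$ for every idempotent $e$.

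Next I would choose $F$ so that this product lands in $E(K)$. Since $p(c(e,e)) = e e e^* e^* = e$, the element $c(e,e)^*$ satisfies $p(c(e,e)^*) = e$, so setting $F(e) = c(e,e)^*$ is admissible on idempotents and yields $(c\cdot\delta F)(e,e) = c(e,e)c(e,e)^*\in E(K)$. To turn this into a genuine cochain I must define $F$ on all of $S$ with $p(F(s)) = ss^*$; because $p|_{E(K)}\colon E(K)\to E(S)$ is a bijection I can extend $F$ freely off the idempotents (for instance by the uniform formula $F(s) = c(ss^*,s)^*$, which restricts to $c(e,e)^*$ on idempotents and satisfies $p(F(s)) = ss^*$), and since $(c\cdot\delta F)(e,e)$ involves $F$ only at $e$, these extra values are irrelevant to the normalization.

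I expect the only delicate point to be the simplification $(\delta F)(e,e) = F(e)$: it hinges on combining the module identity $p(k)k=k$ with the commutativity of $K$ and with $F(ee)=F(e)$. Once that identity is in hand, the choice $F(e)=c(e,e)^*$ is essentially forced, and the remaining verifications — that $F$ is a well-defined element of $C^1(S,K)$ and that $c\cdot\delta F$ is a cocycle cohomologous to $c$ — are routine.
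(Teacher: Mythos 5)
Your proposal is correct and follows essentially the same route as the paper: both modify $c$ by the coboundary of a cochain $F$ whose value on idempotents is $F(e)=c(e,e)^*$, so that $(c\,\delta F)(e,e)=c(e,e)c(e,e)^*\in E(K)$. The only differences are cosmetic — the paper extends $F$ to all of $S$ by $F(s)=c(ss^*,ss^*)^*$ rather than your $c(ss^*,s)^*$ (either works, since only the values at idempotents matter), and your extra simplification $(\delta F)(e,e)=F(e)$ via commutativity is a slightly cleaner way of seeing what the paper verifies directly.
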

\begin{proof}
Let $c\colon S\times S\to K$ be a $2$-cocycle. Define $F\colon S\to K$ by $F(s) = c(ss^*,ss^*)^*$ and put $c'=c\delta F$.  Then, for $e\in E(S)$, we have $c'(e,e) = c(e,e)F(e)(eF(e))F(e^2)^* = c(e,e)c(e,e)^*c(e,e)^*c(e,e)\in E(K)$.  Thus $c'$ is normalized.
\end{proof}

Given an abelian extension as per \eqref{eq:extension}, we may choose a set theoretic section $j\colon S\to T$ such that $j|_{E(S)} = (\p|_{E(T)})\inv$.  We can then define $c\colon S\times S\to K$ by $c(s,s') = \iota\inv(j(s)j(s')j(ss')^*)\in K$; equivalently, $j(s)j(s')=\iota(c(s,s'))j(ss')$ for all $s,s'\in S$.  One checks, cf.~\cite[Section~7]{lausch}, that $c$ is a $2$-cocycle. Moreover, it is normalized since $c(e,e) = \iota\inv (j(e)j(e)j(ee)^\ast)=\iota\inv(j(e))\in E(K)$, as $j(e)\in E(T)$.  Changing the section $j$ results in a cohomologous $2$-cocycle.  Lausch does not require $j$ to preserve idempotents, which results in a not necessarily normalized $2$-cocycle, but we find normalized $2$-cocycles more convenient to work with.  So to each extension of $K$ by $S$, we obtain an $S$-module structure on $K$ and a normalized $2$-cocycle whose cohomology class is well defined.

Two extensions
\begin{gather*}
K\xrightarrow{\,\,\iota\,\,} T\xrightarrow{\,\,\p\,\,} S,\qquad
K\xrightarrow{\,\,\iota'\,\,} T'\xrightarrow{\,\,\p\,\,} S
\end{gather*}
are \emph{equivalent}  if there is an isomorphism $\psi\colon T\to T'$ such that the diagram
\[\begin{tikzcd}
K\arrow{r}{\iota}\arrow{d}{1_K} & T\arrow{r}{\p}\arrow{d}{\psi} & S\arrow{d}{1_S}\\
K\arrow{r}{\iota'} & T'\arrow{r}{\p'} & S
\end{tikzcd}\]
commutes.

Lausch proves that two extensions of $K$ by $S$ are equivalent if and only if the module structures on $K$ are the same and the corresponding cohomology classes of $2$-cocycles are the same.  Moreover, for each $2$-cocycle, he shows that there is an extension \eqref{eq:extension} of $K$ by $S$ realizing the cohomology class of the $2$-cocycle.  If $c\colon S\times S\to K$ is a normalized $2$-cocycle, we can put $T=\{(k,s)\in K\times S\mid p(k)=ss^*\}$ with multiplication given by $(k,s)(k',s') = (k (sk')c(s,s'),ss')$.  Here, $\iota(k) = (k,p(k))$ and $\p(k,s)=s$.  The inverse is given by  $(k,s)^*=((s^*k^*)c(s,s^*)^*,s^*)$.  The class of the trivial cocycle corresponds to the split extension of $K$ by $S$ and $T$, which, in this case, is what is termed the \emph{full restricted semidirect product} $K\bowtie S$ of $K$ by $S$ in~\cite[Chapter~5.3]{Lawson}.  More details can be found in~\cite[Section~7]{lausch}.

We record here some properties of normalized $2$-cocycles for later use.

\begin{Prop}\label{p:props.normalized.cocycle}
Let $K$ be an $S$-module with idempotent bijective homomorphism $p\colon K\to E(S)$ and $c\colon S\times S\to K$ a normalized $2$-cocycle.
\begin{enumerate}
\item $c(s,s^*s)=c(ss^*,s)\in E(K)$.
\item $c(s,s^*) = sc(s^*,s)$ for all $s\in S$.
\item $c(e,ef)= c(ef,e)\in E(K)$ for all $e,f\in E(S)$.
\item $c(e,f)\in E(K)$ for all $e,f\in E(S)$.
\item $c(s,e)=c(s,s^*se)$ for all $e\in E(S)$, $s\in S$.
\item $c(e,s) = c(ess^*,s)$ for all $e\in E(S)$, $s\in S$.
\item $c(e,es), c(se,e)\in E(K)$ for all $e\in E(S)$ and $s\in S$.
\item $c(e,s) = c(s,s^*es)$ for all $e\in E(S)$ and $s\in S$.
\item $c(s,e) = c(ses^*,s)$ for all $e\in E(S)$ and $s\in S$.
\item $c(u,s)c(ut,s^*u^*us)^* = (uc(t,s^*s)^*)c(u,t)$ if $s\leq t\in S$ and $u\in S$.
\item $c(s,u)c(tu,u^*s^*su)^* = c(t,s^*s)^*c(t,u)$ if $s\leq t\in S$ and $u\in S$.
\end{enumerate}
\end{Prop}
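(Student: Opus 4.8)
The engine is a pair of elementary observations about the $S$-module $K$, both immediate from the standing hypotheses that $K$ is commutative, that $p$ restricts to a bijection $E(K)\to E(S)$, and that $p(k)k=k$. First, an \emph{idempotency criterion}: if $k\in K$ satisfies $k^2=qk$ for some $q\in E(K)$, then $k=k^2k^*=q(kk^*)\in E(K)$, using $kk^*k=k$ and commutativity. Second, a \emph{support--absorption principle}: for $q\in E(K)$ one has $qk=k$ whenever $q\ge kk^*$, and any two idempotents of $K$ with the same image under $p$ coincide; in particular $kk^*$ is the unique idempotent of $K$ lying over $p(k)$ when $p(k)\in E(S)$. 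I will also use constantly that the action of $S$ sends idempotents to idempotents and that normalization gives $c(e,e)\in E(K)$.

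I would treat the idempotency assertions first. For (1), the triple $(s,s^*s,s^*s)$ turns the cocycle identity into $(s\cdot c(s^*s,s^*s))\,c(s,s^*s)=c(s,s^*s)^2$; since $s\cdot c(s^*s,s^*s)\in E(K)$, the criterion gives $c(s,s^*s)\in E(K)$, and $(ss^*,ss^*,s)$ does the same for $c(ss^*,s)$, the two being equal because both lie over $ss^*$. The same recipe---choose the triple that makes the target value square---gives $c(se,e)\in E(K)$ from $(se,e,e)$ and $c(e,es)^2=c(e,e)\,c(e,es)$ from $(e,e,es)$, which is (7); for (3) the triples $(ef,e,e)$ and $(e,e,ef)$ show $c(ef,e),c(e,ef)\in E(K)$, and these agree as both lie over $ef$.

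The equalities are then produced by exhibiting a cocycle instance whose two sides differ only by idempotent factors that absorption removes. For (5) I would use $(s,e,s^*se)$: the inner term $c(e,s^*se)=c(e,e\cdot s^*s)$ is idempotent by (7), the term $c(se,s^*se)=c(se,(se)^*(se))$ is idempotent by (1), every idempotent in sight lies over $ses^*$, and absorbing them collapses the identity to $c(s,e)=c(s,s^*se)$. Property (6) is the left-handed mirror, run through $(e,ss^*,s)$ using (1) and (4); and (4) itself is then immediate, since (5) gives $c(e,f)=c(e,ef)$, which is idempotent by (3). Properties (8) and (9) follow identically from $(e,s,s^*es)$ and $(ses^*,s,e)$, whose idempotent factors all lie over $ess^*$, respectively $ses^*$, and therefore cancel; and (2) comes from $(s,s^*,s)$ together with the equality $c(s,s^*s)=c(ss^*,s)$ supplied by (1).

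The real obstacle is (10)--(11). For (10) a single use of the cocycle identity on $(u,t,s^*s)$ gives $(u\cdot c(t,s^*s))\,c(u,s)=c(u,t)\,c(ut,s^*s)$; I would first rewrite $c(ut,s^*s)=c(ut,s^*u^*us)$ via (5) (the two second arguments have the same product with $(ut)^*(ut)$), then note that $A:=u\cdot c(t,s^*s)$ and $B:=c(u,s)$ satisfy $A^*A=BB^*$ (both lie over $uss^*u^*$), so left multiplication by $A^*$ yields $B=A^*c(u,t)\,c(ut,s^*u^*us)$, and finally right multiplication by $c(ut,s^*u^*us)^*$ together with absorption (the relevant $p$-images agree) produces exactly (10). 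Property (11) is structurally heavier: no single instance of the cocycle identity supplies all four terms, so I expect to combine the instances arising from $(t,s^*s,u)$ and $(t,u,(su)^*(su))$, again reducing the mixed second arguments by (5) and clearing idempotents by absorption. Since the various supports no longer all coincide, controlling which idempotents may legitimately be absorbed is the delicate point, and I expect the verification of (11) to be where most of the work lies.
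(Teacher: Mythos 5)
Your treatment of items (1)--(10) is correct and, modulo a harmless reordering, follows essentially the paper's route: you prove (1), (3) and (7) directly by squaring (the paper instead derives (7) from (5), (6) and (1)), obtain (4) from (5) and (3) (the paper proves (4) directly from the instance $(e,f,ef)$), and your instances for (2), (6), (8), (9), (10) are the same ones the paper uses. Your two preliminary observations --- the idempotency criterion $k^2=qk\Rightarrow k\in E(K)$ and the absorption principle --- are sound and are exactly what the paper uses implicitly, and your star-manipulation in (10) via $A^*A=BB^*$ checks out.

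The genuine gap is (11), and you flagged it yourself. Your plan picks exactly the two instances the paper uses (note $(su)^*(su)=u^*s^*su$), namely
\begin{gather*}
(tc(s^*s,u))\,c(t,s^*su) = c(t,s^*s)\,c(s,u),\\
(tc(u,u^*s^*su))\,c(t,s^*su) = c(t,u)\,c(tu,u^*s^*su),
\end{gather*}
where one uses $ts^*s=s$ and $uu^*s^*su=s^*su$ to simplify the middle factors. To conclude, you must equate the two right-hand sides, which requires identifying the first factors $tc(s^*s,u)$ and $tc(u,u^*s^*su)$. Neither of the tools you propose can do this: (5) only replaces an idempotent second argument $e$ by $s^*se$, and absorption only removes \emph{idempotent} factors, whereas $c(s^*s,u)$ is not an idempotent (its second argument $u$ is arbitrary); the fact that both factors have the same $p$-image $s^*suu^*$ proves nothing for non-idempotents, since $p$ is only injective on $E(K)$. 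The missing ingredient is item (8), the ``flip'' identity $c(e,s)=c(s,s^*es)$, applied with $e=s^*s$ and $s=u$: it gives precisely $c(s^*s,u)=c(u,u^*s^*su)$, so the two left-hand sides above are literally equal, whence $c(t,s^*s)c(s,u)=c(t,u)c(tu,u^*s^*su)$. From there the same star-and-absorb manipulation you carried out in (10) (multiply by $c(t,s^*s)^*$ on the left and by $c(tu,u^*s^*su)^*$ on the right, checking that the relevant $p$-images are $\leq ss^*$, respectively equal to $suu^*s^*$) finishes (11). So the idea you were missing is that (8) is not just another entry on the list: it is the lemma that links the two cocycle instances in (11).
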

\begin{proof}
 For $e\in E(S)$, let $k_e$ denote the unique idempotent of $K$ with $p(k_e)=e$.
By the $2$-cocycle condition, the definition of a module and since $c$ is normalized, $c(s,s^*s)= (sc(s^*s,s^*s))c(s,s^*s)=c(s,s^*s)c(s,s^*s)$.  Also, the $2$-cocycle condition and the definition of a module yields  \[c(ss^*,s)^2=(ss^*c(ss^*,s))c(ss^*,s) =c(ss^*,ss^*)c(ss^*,s)=c(ss^*,s),\] as $c$ is normalized.  Thus $c(s,s^*s),c(ss^*,s)$ are idempotents with image $ss^*$ under $p$ and hence are equal as $p$ is idempotent bijective.  This yields (1).

For (2), we have $(sc(s^*,s))c(s,s^*s)=c(s,s^*)c(ss^*,s)$.  By the first item, we conclude that $c(s,s^*)=sc(s^*,s)$.

 For (3), we compute $c(e,ef)^2=(ec(e,ef))c(e,ef) = c(e,e)c(e,ef)=c(e,ef)$ since $c$ is normalized and \[c(e,ef)=efc(e,ef)=ec(e,ef)=c(e,e)c(e,ef).\]    Similarly, we have \[c(ef,e)=(efc(e,e))c(ef,e)=c(ef,e)c(ef,e)\] as $efc(e,e)=k_{ef}$ because $c$ is normalized.  Since $p(c(e,ef))=ef=p(c(ef,e))$ and $p$ is idempotent bijective, we deduce that $c(e,ef)=c(ef,e)$.

To prove (4), observe that $(ec(f,ef))c(e,ef)=c(e,f)c(ef,ef)=c(e,f)$ since $c$ is normalized, and so $c(e,f)\in E(K)$ by (3).

For (5), we have $(sc(s^*s,e))c(s,s^*se) = c(s,s^*s)c(s,e)$.  By (4), $sc(s^*s,e) = k_{ses^*}$ and, by (1), $c(s,s^*s) = k_{ss^*}\geq k_{ses^*}$, and so the left hand side is $c(s,s^*se)$ and the right hand side is $c(s,e)$.

For (6), we have $(ec(ss^*,s))c(e,s)=c(e,ss^*)c(ess^*,s)$.  As $c(ss^*,s)=k_{ss^*}$, $c(e,ss^*)=k_{ess^*}$ by (1) and (4), we deduce that $c(e,s)=c(ess^*,s)$.

For (7), we have $c(e,es) = c(ess^*,es)\in E(K)$ and $c(se,e)=c(se,es^*s)\in E(K)$ by (6) and (1).

For (8), we note by the $2$-cocycle condition \[(ec(s,s^*es))c(e,es)=c(e,s)c(es,s^*es).\]  But by (1), $c(es,s^*es)=k_{ess^*}$  and by (7) $c(e,es)=k_{ess^*}$.  Also since $p(c(s,s^*es)) = ess^*\leq e$, we have that $ec(s,s^*es)=ess^*c(s,s^*es)=c(s,s^*es)$.  We conclude that $c(s,s^*es)=c(e,s)$ as required.

For (9), we note that $(ses^*c(s,e))c(ses^*,se) = c(ses^*,s)c(se,e)$ by the $2$-cocycle condition.  But $c(ses^*,se)=k_{ses^*}=c(se,e)$  by (1) and (7), and so we deduce, since $p(c(s,e)) = ses^*$, that $c(s,e)=c(ses^*,s)$.

To prove (10), we compute
\begin{align*}
(uc(t,s^*s))c(u,s) &= (uc(t,s^*s))c(u,ts^*s) = c(u,t)c(ut,s^*s)\\ &= c(u,t)c(ut, t^*u^*uts^*s)
\end{align*}
 where the last equality uses (5).  But $t^*u^*uts^*s=s^*st^*u^*uts^*s=s^*u^*us$.  Thus we have $(uc(t,s^*s))c(u,s)=c(u,t)c(ut,s^*u^*us)$.  Using $p(uc(t,s^*s)) = uss^*u^*=p(c(ut,s^*u^*us))$, we deduce that \[c(u,s)c(ut,s^*u^*us)^* = (uc(t,s^*s)^*)c(u,t),\] as required.

We now turn to (11).  From the $2$-cocycle condition we obtain
\begin{align*}
(tc(u,u^*s^*su))c(t,s^*su) &= c(t,u)c(tu,u^*s^*su), \\
(tc(s^*s,u))c(t,s^*su) &= c(t,s^*s)c(ts^*s,u) \\ &= c(t,s^*s)c(s,u)
\end{align*}
Note that $c(s^*s,u) = c(u,u^*s^*su)$ by (8).  Therefore, $c(t,s^*s)c(s,u)= c(t,u)c(tu,u^*s^*su)$.  Then, using that we have
\begin{align*}
p(c(t,s^*s))&=ss^*\geq suu^*s^*=p(c(tu,u^*s^*su))\\ &=p(c(s,u)) = p(c(t,s^*s)^*c(t,u)),
\end{align*}
 we deduce that $c(s,u)c(tu,u^*s^*su)^* = c(t,s^*s)^*c(t,u)$, as required.
\end{proof}

\subsection{Twists and extensions of groupoids}
By an \emph{extension} of ample groupoids we mean an exact sequence
\begin{equation}\label{eq:ext.groupoid}
\mathscr K\xrightarrow{\,\,\iota\,\,} \mathscr H\xrightarrow{\,\,\p\,\,} \mathscr G
\end{equation}
 of open iso-unital functors with $\iota$ injective, $\p$ surjective and $\p^{-1}(\mathscr G\skel 0) = \iota(\mathscr K)$.  A second extension \[\mathscr K\xrightarrow{\,\,\iota'\,\,} \mathscr H'\xrightarrow{\,\,\p'\,\,} \mathscr G\] is \emph{equivalent} to \eqref{eq:ext.groupoid} if there is an isomorphism $\psi\colon \mathscr H\to \mathscr H'$ making the diagram
\[\begin{tikzcd}
\mathscr K\arrow{r}{\iota}\arrow{d}{1_{\mathscr K}} & \mathscr H\arrow{r}{\p}\arrow{d}{\psi} & \mathscr G\arrow{d}{1_{\mathscr G}}\\
\mathscr K\arrow{r}{\iota'} & \mathscr H'\arrow{r}{\p'} & \mathscr G
\end{tikzcd}\]
commute.

We now wish to show that our functors $\Gamma_c$ and $\mathcal G$ are exact in the sense that they preserve extensions.

\begin{Thm}\label{t:preserve.extensions}
The functors $\Gamma_c$ and $\mathcal G$ preserve extensions. More precisely:
\begin{enumerate}
\item If $\mathscr K\xrightarrow{\,\,\iota\,\,} \mathscr H\xrightarrow{\,\,\p\,\,} \mathscr G$ is an extension of ample groupoids, then \[\Gamma_c(\mathscr K)\xrightarrow{\,\,\Gamma_c(\iota)\,\,} \Gamma_c(\mathscr H)\xrightarrow{\,\,\Gamma_c(\p)\,\,} \Gamma_c(\mathscr G)\] is an extension of Boolean inverse semigroups.
\item If $K\xrightarrow{\,\,\iota\,\,} T\xrightarrow{\,\,\p\,\,} S$ is an extension of Boolean inverse semigroups, then \[\mathcal G(K)\xrightarrow{\,\,\mathcal G(\iota)\,\,} \mathcal G(T)\xrightarrow{\,\,\mathcal G(\p)\,\,} \mathcal G(S)\] is an extension of ample groupoids.
\end{enumerate}
\end{Thm}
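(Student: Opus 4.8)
The plan is to lean on the categorical machinery already assembled rather than to argue germ-by-germ. In both parts Theorem~\ref{t:equiv.cats} guarantees that the images of $\iota$ and $\p$ under $\Gamma_c$ (resp.\ $\mathcal G$) are again idempotent bijective homomorphisms (resp.\ open iso-unital functors), and Proposition~\ref{p:preserve.epimono} guarantees that injectivity of $\iota$ and surjectivity of $\p$ are preserved. So in each case the only thing left to check is the exactness (``kernel'') condition, and that is where the work lies.

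For part~(1), I would first observe that $\iota$ is a homeomorphism onto its image. Indeed, $\iota$ is open and, by Proposition~\ref{p:local.homeo}, a local homeomorphism; being injective as well, it carries $\mathscr K$ homeomorphically onto the open set $\iota(\mathscr K)$, which by the iso-unital hypothesis is a wide open subgroupoid of $\mathscr H$ (it contains $\mathscr H\skel 0=\iota(\mathscr K\skel 0)$). The exactness hypothesis $\iota(\mathscr K)=\p^{-1}(\mathscr G\skel 0)$ then lets me pin down $\ker\Gamma_c(\p)$ concretely: a compact open bisection $U$ of $\mathscr H$ lies in $\ker\Gamma_c(\p)$ exactly when $\p(U)\subseteq\mathscr G\skel 0$, i.e.\ when $U\subseteq\p^{-1}(\mathscr G\skel 0)=\iota(\mathscr K)$. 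Since $\iota$ is a homeomorphism onto the open set $\iota(\mathscr K)$ and, via the iso-unital property, carries bisections to bisections and back (as in the bisection argument inside the proof of Theorem~\ref{t:equiv.cats}), such $U$ are precisely the sets $\iota(V)$ with $V\in\Gamma_c(\mathscr K)$. That is, $\ker\Gamma_c(\p)=\Gamma_c(\iota)(\Gamma_c(\mathscr K))$, as required.

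For part~(2), rather than computing with germs and characters, I would deduce it from part~(1) by transport across the equivalence. The key is a converse to the computation above: for any sequence $\mathscr K\xrightarrow{\iota}\mathscr H\xrightarrow{\p}\mathscr G$ of open iso-unital functors with $\iota$ injective and $\p$ surjective, one has $\p^{-1}(\mathscr G\skel 0)=\iota(\mathscr K)$ \emph{if and only if} $\ker\Gamma_c(\p)=\Gamma_c(\iota)(\Gamma_c(\mathscr K))$. The forward implication is part~(1). For the reverse, using that $\mathscr G\skel 0$ is open and that compact open bisections form a basis, every $h$ with $\p(h)\in\mathscr G\skel 0$ lies in some compact open bisection $U\subseteq\p^{-1}(\mathscr G\skel 0)$, whence $U\in\ker\Gamma_c(\p)=\Gamma_c(\iota)(\Gamma_c(\mathscr K))$ and so $h\in\iota(\mathscr K)$; conversely each point $\iota(k)$ sits in a bisection $\iota(V)\in\ker\Gamma_c(\p)$ (take $V\in\Gamma_c(\mathscr K)$ containing $k$), giving $\iota(\mathscr K)\subseteq\p^{-1}(\mathscr G\skel 0)$. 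Granting this equivalence, I apply it to $\mathcal G(K)\xrightarrow{\mathcal G(\iota)}\mathcal G(T)\xrightarrow{\mathcal G(\p)}\mathcal G(S)$: naturality of $\varepsilon$ from Theorems~\ref{t:duality} and~\ref{t:equiv.cats} makes $\Gamma_c$ of this sequence isomorphic (as a sequence) to the original extension $K\xrightarrow{\iota}T\xrightarrow{\p}S$, so $\Gamma_c$ of the $\mathcal G$-sequence is an extension of Boolean inverse semigroups, and the ``if'' direction then forces $\mathcal G(\p)^{-1}(\mathcal G(S)\skel 0)=\mathcal G(\iota)(\mathcal G(K))$.

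The main obstacle I anticipate is the bookkeeping in the exactness condition: establishing that $\ker\Gamma_c(\p)$ is \emph{exactly} the set of compact open bisections contained in $\p^{-1}(\mathscr G\skel 0)$, and that these match bijectively with $\Gamma_c(\mathscr K)$ under $\iota$, requires careful use of the iso-unital hypothesis (to transfer the bisection property through $\iota$ in both directions) together with the fact that $\iota$ is a homeomorphism onto an open subgroupoid. A direct germ-theoretic verification of part~(2) is also possible, but the character manipulations (tracking $\chi\circ(\p|_{E(T)})^{-1}$ through the order relation) are fiddly, which is why I prefer the transport argument; its only extra cost is the routine observation that being an extension is preserved under the natural isomorphism $\varepsilon$.
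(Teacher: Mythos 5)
Your proposal is correct, and part (2) takes a genuinely different route from the paper. For part (1) your argument is essentially the paper's, just carried out by hand: the paper observes that $\p\inv(\mathscr G\skel 0)$ is an open, hence ample, subgroupoid of $\mathscr H$, that $\ker\Gamma_c(\p)=\Gamma_c(\p\inv(\mathscr G\skel 0))$, and then gets surjectivity of $\Gamma_c(\mathscr K)\to\Gamma_c(\p\inv(\mathscr G\skel 0))$ in one stroke by applying Proposition~\ref{p:preserve.epimono} to the surjective open iso-unital functor $\iota\colon\mathscr K\to\p\inv(\mathscr G\skel 0)$, whereas you verify the same correspondence of compact open bisections directly via the homeomorphism-onto-image property of $\iota$; both verifications are sound. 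For part (2) the paper argues germ-by-germ: given $[t,\chi]$ with $\mathcal G(\p)([t,\chi])\in\mathcal G(S)\skel 0$, it produces an idempotent $f\leq\p(t)$ with $\chi\circ(\p|_{E(S)})\inv(f)=1$, pulls it back to $e\in E(T)$, shows $[t,\chi]=[te,\chi]$, and uses $te\in\ker\p=\iota(K)$. You avoid this entirely by proving that, for any sequence of open iso-unital functors with injective $\iota$ and surjective $\p$, the groupoid-level kernel condition $\p\inv(\mathscr G\skel 0)=\iota(\mathscr K)$ is \emph{equivalent} to the semigroup-level condition $\ker\Gamma_c(\p)=\Gamma_c(\iota)(\Gamma_c(\mathscr K))$ (the reverse implication being a short basis argument using that unit spaces are open and compact open bisections form a basis), and then transporting the extension property of $K\to T\to S$ across the natural isomorphism $\varepsilon$ of Theorem~\ref{t:equiv.cats}; applying the reverse implication to $\mathcal G(K)\to\mathcal G(T)\to\mathcal G(S)$ finishes the job. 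What your route buys is conceptual economy: it exhibits ``being an extension'' as a property detected by $\Gamma_c$, so that part (2) follows formally from part (1) together with $\Gamma_c\circ\mathcal G\cong\mathrm{id}$, with no character manipulations at all. What the paper's computation buys is a self-contained, explicit description of how kernels look at the level of germs, which is the kind of concrete information used elsewhere (e.g., in the proof of Proposition~\ref{p:preserve.epimono} itself). One small point to make explicit if you write this up: the transport step needs not just that $\varepsilon$ is an isomorphism objectwise but the naturality squares for both $\iota$ and $\p$, which is exactly what the last paragraph of the proof of Theorem~\ref{t:equiv.cats} provides.
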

\begin{proof}
By Theorem~\ref{t:equiv.cats} and Proposition~\ref{p:preserve.epimono}, all that remains to check is that $\Gamma_c(\iota)(\Gamma_c(\mathscr K))=\Gamma_c(\p)\inv(E(\Gamma_c(\mathscr G)))$ and $\mathcal G(\iota)(\mathcal G(K)) = \mathcal G(\p)\inv (\mathcal G(S)\skel 0)$.  For the first item, note that since $\mathscr G\skel 0$ is open, we have that $\p\inv(\mathscr G\skel 0)$ is an open subgroupoid of $\mathscr H$ and hence ample. Moreover, $\Gamma_c(\p)\inv(E(\Gamma_c(\mathscr G)))=\Gamma_c(\pinv(\mathscr G\skel 0))$ .  Since $\iota\colon \mathscr K\to \p\inv(\mathscr G\skel 0)$ is surjective and open iso-unital, we deduce from Proposition~\ref{p:preserve.epimono} that \[\Gamma(\iota)(\Gamma_c(\mathscr K)) = \Gamma_c(\p\inv(\mathscr G\skel 0))=\Gamma_c(\p)\inv(E(\Gamma_c(\mathscr G))),\] as required.

For the second item, liet $[k,\chi]\in \mathcal G(K)$.  Then \[\mathcal G(\p)\mathcal G(\iota)([k,\chi]) = [\p\iota(k),\chi\circ (\p\iota|_{E(K)})\inv]\in \mathcal G(S)\skel 0\] since $\p\iota(k)\in E(S)$.  Conversely, if $[\p(t),\chi\circ (\p|_{E(T)})\inv]=\mathcal G(\p)([t,\chi])\in \mathcal G(S)\skel 0$, then there is an idempotent $f\in E(S)$ with $f\leq \p(t)$ and \[\chi\circ (\p|_{E(S)})\inv(f)=1.\]  Putting $e=(\p|_{E(S)})\inv (f)$, we have that $\chi(e)=1$ and $\p(te)=\p(t)f=f$.  Since $\chi(et^*te)=1$, we deduce that $[t,\chi]=[te,\chi]$.  As $te\in \pinv(E(S))$, we have that $te=\iota(k)$ with $k\in K$.  Then $[k,\chi\circ \iota|_{E(K)}]\in \mathcal G(K)$ (as $\chi\iota(k^*k) = \chi(et^*te)=1$) and $\mathcal G(K)(\iota)([k,\chi\circ\iota|_{E(K)}]) = [te,\chi]=[t,\chi]$.  Thus $\mathcal G(\iota)(\mathcal G(K))= \mathcal G(\p)\inv(\mathcal G(S)\skel 0)$, as required.  This completes the proof.
\end{proof}

It follows from Theorem~\ref{t:preserve.extensions} that classifying extensions of ample groupoids is equivalent to classifying extensions of Boolean inverse semigroups.  One has to have some care though in applying Lausch cohomology in order to make sure that the extension coming from a $2$-cocycle is Boolean.

We briefly digress to examine how the Hausdorff property behaves under extensions of ample groupoids.  It follows from the results of~\cite{groupoidprimitive}, see also~\cite{LawsonLenz}, that an ample groupoid $\mathscr G$ is Hausdorff if and only if each $U\in \Gamma_c(\mathscr G)$ has a maximum idempotent below it in the natural partial order.  In the case $\mathscr G$ is Hausdorff, $U\cap\mathscr G\skel 0$ is the maximum idempotent below $U$.

\begin{Prop}\label{p:ext.hausdorff}
Consider an extension of ample groupoids as in \eqref{eq:ext.groupoid}.  Then $\mathscr G$ is Hausdorff if and only if each element of $\Gamma_c(\mathscr H)$ has a maximum element of $\Gamma_c(\iota)(\Gamma_c(\mathscr K))$ below it.  Moreover, if $\mathscr G$ and $\mathscr K$ are both Hausdorff, then so is $\mathscr H$.
\end{Prop}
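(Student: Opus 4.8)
The plan is to transport everything to the Boolean inverse semigroup side via Theorem~\ref{t:preserve.extensions} and the characterization of Hausdorffness recalled just before the statement. Write $S=\Gamma_c(\mathscr G)$, $T=\Gamma_c(\mathscr H)$, let $q=\Gamma_c(\p)\colon T\to S$, and set $N=\Gamma_c(\iota)(\Gamma_c(\mathscr K))$. By Theorem~\ref{t:preserve.extensions}, the sequence $\Gamma_c(\mathscr K)\to T\to S$ is an extension of Boolean inverse semigroups, so $N=\ker q=q\inv(E(S))$ is a normal inverse subsemigroup with $E(N)=E(T)$. Since $\mathscr G\cong\mathcal G(S)$ and $\mathscr H\cong\mathcal G(T)$, the recalled fact says $\mathscr G$ is Hausdorff iff every element of $S$ has a maximum idempotent below it, and $\mathscr H$ is Hausdorff iff every element of $T$ does. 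Two observations I will use throughout: every idempotent of $T$ already lies in $N$, and $q|_{E(T)}\colon E(T)\to E(S)$ is an order isomorphism, since a bijective homomorphism of semilattices automatically reflects the order.

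For the first assertion I would prove both implications by direct order manipulations. Assuming $\mathscr G$ Hausdorff, take $v\in T$, put $w=q(v)$, let $e_0\in E(S)$ be the maximum idempotent below $w$, lift it to $\tilde e_0\in E(T)$, and set $n_0=v\tilde e_0\le v$; then $q(n_0)=we_0=e_0\in E(S)$, so $n_0\in N$. If $n\in N$ with $n\le v$, then $q(n^*n)=q(n)\le e_0$ forces $n^*n\le\tilde e_0$ by the order isomorphism on idempotents, whence $n=v(n^*n)=v\tilde e_0(n^*n)=n_0(n^*n)\le n_0$, so $n_0$ is the maximum element of $N$ below $v$. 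Conversely, assuming every $v\in T$ has a maximum $N$-element below it, take $w\in S$, choose $v\in T$ with $q(v)=w$ (surjectivity), and let $n_0$ be the maximum $N$-element below $v$. Then $q(n_0)\in E(S)$ and $q(n_0)\le w$; for any $f\in E(S)$ with $f\le w$, lifting $f$ to $\tilde f\in E(T)$ gives $q(v\tilde f)=wf=f$, so $v\tilde f\in N$ and $v\tilde f\le v$, hence $v\tilde f\le n_0$ and $f=q(v\tilde f)\le q(n_0)$. Thus $q(n_0)$ is the maximum idempotent below $w$, and $\mathscr G$ is Hausdorff.

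For the second assertion, assume $\mathscr G$ and $\mathscr K$ are Hausdorff and take $v\in T$. By the first assertion (using $\mathscr G$ Hausdorff) there is a maximum element $n_0\in N$ below $v$. Since $\iota$ is injective, $\Gamma_c(\iota)$ restricts to an order- and idempotent-preserving isomorphism $\Gamma_c(\mathscr K)\cong N$, so $\mathscr K$ Hausdorff gives a maximum idempotent $e_0\in E(N)=E(T)$ below $n_0$ in $N$; as every idempotent of $T$ lies in $N$, this $e_0$ is also the maximum idempotent of $T$ below $n_0$. I then claim $e_0$ is the maximum idempotent of $T$ below $v$: if $f\in E(T)$ with $f\le v$, then $f\in N$ and $f\le v$ give $f\le n_0$, and hence $f\le e_0$. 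Therefore every element of $T$ has a maximum idempotent below it, so $\mathscr H$ is Hausdorff.

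The only genuinely delicate point is the bookkeeping in the first assertion: because $N=\ker q$ contains non-idempotents, the maximal object below $v$ must be sought in $N$ rather than among idempotents, and the passage between the orders on $S$ and on $T$ relies on $q|_{E(T)}$ being an order isomorphism. Everything else is routine once the extension identity $N=q\inv(E(S))$ from Theorem~\ref{t:preserve.extensions} and the isomorphism $\Gamma_c(\mathscr K)\cong N$ are in hand.
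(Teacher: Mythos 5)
Your proof is correct, but it runs on the other side of the equivalence of categories than the paper's does. The paper argues directly with the topology of the groupoids: for the forward direction it takes the maximum idempotent $V\subseteq \p(U)$, observes that $V$ is \emph{clopen} because $\mathscr G$ is Hausdorff, and uses this to show $W=\pinv(V)\cap U$ is a compact open bisection of $\mathscr K$, which it then verifies is the maximum element of $\Gamma_c(\mathscr K)$ below $U$; the converse is a pointwise argument (pick $x\in V$, a preimage $h\in U$, and a small compact open bisection $W'\ni h$ inside $U\cap\pinv(V)$). You instead transport everything to the Boolean inverse semigroup side via Theorem~\ref{t:preserve.extensions} and work purely order-theoretically with $N=q\inv(E(S))$, the order isomorphism $q|_{E(T)}$, and the identity $n=vn^*n$ for $n\le v$. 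The two constructions actually coincide: your $n_0=v\til e_0$ is literally the paper's $\pinv(V)\cap U$. What your framing buys is that all compactness and bisection verifications are automatic, since products of elements of $\Gamma_c(\mathscr H)$ stay in $\Gamma_c(\mathscr H)$ --- in particular you never need the clopenness observation that the paper's proof requires --- and the converse becomes a two-line lifting argument instead of a point-set one. What it costs is reliance on more machinery (Theorem~\ref{t:preserve.extensions} and Proposition~\ref{p:preserve.epimono} for surjectivity and injectivity of $\Gamma_c(\p)$ and $\Gamma_c(\iota)$), whereas the paper's proof is essentially self-contained apart from the surjectivity statement. Both proofs share the same skeleton for the final assertion (chain the maximum $N$-element below $v$ with the maximum idempotent below it, using $E(N)=E(T)$), so there the difference is only cosmetic.
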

\begin{proof}
Without loss of generality, we may assume that $\iota$ is an inclusion.  Suppose that $\mathscr G$ is Hausdorff and $U\in \Gamma_c(\mathscr H)$.  Then $\p(U)\in \Gamma_c(\mathscr G)$ has a unique maximum idempotent $V\subseteq \p(U)$.  Since $V\subseteq \mathscr G\skel 0$, we have that $\pinv(V)\subseteq \mathscr K$.  Also, since $V$ is clopen (as $\mathscr G$ is Hausdorff), we have that $\pinv(V)\cap U\subseteq\mathscr K$ is clopen and hence compact (since $U$ is compact).  Thus $W=\pinv(V)\cap U\in \Gamma_c(\mathscr K)$ and $W\subseteq U$. Moreover, if $W'\in \Gamma_c(\mathscr K)$ with $W'\subseteq U$, then $\p(W')\subseteq \p(U)\cap \mathscr G\skel 0=V$ and so $W'\subseteq W$.  Thus $W$ is the maximum element of $\Gamma_c(\mathscr K)$ below $U$.  For the converse, by Proposition~\ref{p:preserve.epimono}, every element of $\Gamma_c(\mathscr G)$ is of the form $\p(U)$ with $U\in \Gamma_c(\mathscr H)$.  Let $W$ be the maximum element of $\Gamma_c(\mathscr K)$ below $U$. Then $\p(W)\subseteq\mathscr G\skel 0$ is an idempotent and $\p(W)\subseteq \p(U)$.  Suppose that $V\subseteq\mathscr G\skel 0$ is compact open with $V\subseteq \p(U)$.  Let $x\in V$ and let $h\in U$ with $\p(h)=x$.  Since $U\cap \pinv(V)$ is an open subset of $\mathscr K$ containing $h$, we can find a compact open bisection $W'\in \Gamma_c(\mathscr K)$ with $h\in W'\subseteq U\cap \pinv(V)\subseteq \mathscr K$.  Then $W'\subseteq W$ by choice of $W$ and so $\p(W')\subseteq \p(W)$.  As $x=\p(h)\in \p(W')$, we deduce that $x\in \p(W)$ and hence $V\subseteq \p(W)$.  This shows that $\p(W)$ is the maximum idempotent below $\p(U)$ and hence $\mathscr G$ is Hausdorff.

Suppose now that $\mathscr G$ and $\mathscr K$ are Hausdorff.  We continue to assume that $\iota$ is an inclusion.  If $U\in\Gamma_c(\mathscr H)$, then there is a unique maximum element $V$ of $\Gamma_c(\mathscr K)$ with $V\subseteq U$ by the above paragraph. Since $\mathscr K$ is Hausdorff, there is a unique maximum idempotent $W\subseteq V$ in $\Gamma_c(\mathscr K)$.  Note that $E(\Gamma_c(\mathscr K)) = E(\Gamma_c(\mathscr H))$ as $\mathscr K\skel 0=\mathscr H\skel 0$.  Thus any idempotent $U'\subseteq U$ belongs to $\Gamma_c(\mathscr K)$, and hence is below $V$, and therefore below $W$.  Thus $W$ is the maximum idempotent below $U$.  It follows that $\mathscr H$ is Hausdorff.
\end{proof}

Let $\mathscr G$ be an ample groupoid and $A$ a discrete abelian group with identity $1$ and with binary operation written multiplicatively (which we can think of as a one-object ample groupoid).  In practice, $A=R^\times$ where $R$ is a commutative ring with unity.  A \emph{(discrete) $A$-twist} over $\mathscr G$ is an exact sequence of ample groupoids
\begin{equation}\label{eq:Atwist}
A\times \mathscr G\skel 0\xrightarrow{\,\,\iota\,\,} \Sigma\xrightarrow{\,\,\p\,\,} \mathscr G
\end{equation}
(with $\p\iota(a,x)=x$) which is \emph{central} in the sense that
\begin{equation}\label{eq:central}
\iota(a,\ran(\p(s)))s=s\iota(a,\dom(\p(s)))
\end{equation}
for all $a\in A$ and $s\in \Sigma$.  In this case, we often write $as$ for $\iota(a,\ran(\p(s)))s=s\iota(a,\dom(\p(s)))$ and we note that $(as)(a's')  = (aa')ss'$ for $a,a'\in A$ and $s,s'\in \Sigma$ with $\dom(s)=\ran(s')$.  Also note that $(a,s)\mapsto as$ is a free action of $A$ on $\Sigma$ by homeomorphisms (but not by groupoid automorphisms).  Note that since $A\times \mathscr G\skel 0$ is Hausdorff, if $\mathscr G$ is Hausdorff, then so is $\Sigma$ by Proposition~\ref{p:ext.hausdorff}.  This was already observed in~\cite[Corollary~2.3]{twistedreconstruction} using an argument specific to twists.

Note that a different definition of twists is given in~\cite{twists}, where $\Sigma$ is not required to be ample and $\iota$ is not required to be open, but where $\p$ is required to be a locally trivial fiber bundle with fiber $A$.  However, it is shown in~\cite[Proposition~2.2]{twistedreconstruction} that our definition is equivalent to the one in~\cite{twists}.  The first step in reformulating $A$-twists in terms of inverse semigroups is to identify $\Gamma_c(A\times \mathscr G\skel 0)$ in semigroup theoretic terms.  Note that $A\cup \{0\}$ is an inverse semigroup, where $0a=0=a0=0^2$ for all $a\in A$.  Note $a^*=a^{-1}$ if $a\in A$ and $0^*=0$.

\begin{Prop}\label{p:bisections}
Let $\til A =C_c(\mathscr G\skel 0,A\cup \{0\})$ be the set of locally constant mappings $f\colon \mathscr G\skel 0\to A\cup \{0\}$ with compact support (meaning $\supp(f)=f^{-1}(A)$ is compact).  Then $\til A$ is a commutative inverse semigroup under pointwise multiplication and there is an isomorphism $\gamma\colon \til A\to \Gamma_c(A\times \mathscr G\skel 0)$ given by $f\mapsto U_f=\{(f(x),x)\mid x\in \supp(f)\}$.
\end{Prop}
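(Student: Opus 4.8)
The plan is to verify the two assertions separately: first transport the inverse semigroup structure of $A\cup\{0\}$ to $\til A$ pointwise, and then show $\gamma$ is a bijective homomorphism onto $\Gamma_c(A\times\mathscr G\skel 0)$. For the first assertion, since $A$ is discrete, a map $f\colon\mathscr G\skel 0\to A\cup\{0\}$ is locally constant precisely when it is continuous into the discrete space $A\cup\{0\}$, and then $\supp(f)=f\inv(A)$ is clopen. For $f,g\in\til A$ the pointwise product $fg$ is again continuous, and because $A$ is a group we have $\supp(fg)=\supp(f)\cap\supp(g)$, a closed subset of a compact set and hence compact; similarly $f^*$, given by $f^*(x)=f(x)^*$, lies in $\til A$ with $\supp(f^*)=\supp(f)$. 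The inverse semigroup identities (commutativity, associativity, $ff^*f=f$, $f^*ff^*=f^*$, and uniqueness of the inverse) then all hold because they hold pointwise in $A\cup\{0\}$. I would also record that $E(A\cup\{0\})=\{0,1\}$, so that the idempotents of $\til A$ are exactly the characteristic functions of compact open subsets of $\mathscr G\skel 0$, matching $E(\Gamma_c(A\times\mathscr G\skel 0))$.

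Next I would check that $\gamma$ is a well-defined homomorphism. Writing $F\subseteq A$ for the finite set of nonzero values of $f$ on its compact support, one has $\supp(f)=\bigsqcup_{a\in F}f\inv(a)$ with each $f\inv(a)$ compact open, and correspondingly $U_f=\bigsqcup_{a\in F}(\{a\}\times f\inv(a))$ is a finite disjoint union of compact open subsets of the group bundle $A\times\mathscr G\skel 0$, hence compact open. Since $\dom(a,x)=x=\ran(a,x)$, the maps $\dom$ and $\ran$ restrict on $U_f$ to the bijection $(f(x),x)\mapsto x$ onto $\supp(f)$, so $U_f$ is a bisection and therefore $U_f\in\Gamma_c(A\times\mathscr G\skel 0)$. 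The homomorphism property is then a direct computation: two elements $(f(x),x)$ and $(g(y),y)$ are composable exactly when $x=y\in\supp(f)\cap\supp(g)=\supp(fg)$, with product $(f(x)g(x),x)=((fg)(x),x)$, whence $U_fU_g=U_{fg}$.

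Finally I would prove $\gamma$ is a bijection. Injectivity is immediate, since $U_f$ determines $\supp(f)=\dom(U_f)$ and the value $f(x)$ at each point of the support. The essential point, and the step I expect to be the main obstacle, is surjectivity. Given $U\in\Gamma_c(A\times\mathscr G\skel 0)$, the bisection property says $\dom|_U$ is injective, so $U$ is the graph of a function $x\mapsto a_x$ defined on $\dom(U)$; I would set $f(x)=a_x$ for $x\in\dom(U)$ and $f(x)=0$ otherwise, so that $U=U_f$ by construction. The delicate part is verifying that this $f$ genuinely lies in $\til A$. Here I would use that $U$ is open and $A$ is discrete: each $(a,x)\in U$ has a basic neighborhood $\{a\}\times V\subseteq U$, which forces $f\equiv a$ on $V$, so $f$ is locally constant on $\dom(U)$; and $\dom(U)$ is open (as $\dom$ is a local homeomorphism) and compact (being the continuous image of $U$), hence clopen since $\mathscr G\skel 0$ is Hausdorff. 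Consequently $f$ is locally constant everywhere and has compact support, giving $\gamma(f)=U$ and completing the proof that $\gamma$ is an isomorphism.
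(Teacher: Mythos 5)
Your proof is correct and takes essentially the same route as the paper's: the same pointwise verification of the inverse semigroup structure of $\til A$, the same computation $U_fU_g=U_{fg}$, the same injectivity argument via $\supp(f)=\dom(U_f)$, and the same inverse map $f=\pi\circ(\dom|_U)\inv$ on $\dom(U)$. The extra steps you flag as delicate --- that $U_f$ is genuinely a compact open bisection, and that the function recovered from $U$ is locally constant with compact support (using that $A$ is discrete and $\dom(U)$ is clopen) --- are exactly the verifications the paper compresses into ``clearly'' and ``obviously,'' so your write-up is a more detailed rendering of the same argument rather than a different one.
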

\begin{proof}
Clearly $\til A$ is a commutative inverse semigroup with $f^*(x) = f(x)^\ast$ giving the inversion.  Note that \[\supp(fg) = \supp(f)\cap \supp(g)=\supp(f)\supp(g)\] since $A$ is a group.  The map $\gamma$ is a homomorphism because
\begin{align*}
U_fU_g &= \{(f(x),x)(g(x),x)\mid x\in \supp(f)\cap \supp(g)\} \\ &= \{(f(x)g(x),x)\mid  x\in \supp(fg)\}=U_{fg}.
\end{align*}
  It is obvious that $\gamma$ is injective since $\supp(f)=\dom(U_f)$ and $f = \pi\circ (\dom|_{U_f})^{-1}$ on its support, where $\pi$ is the projection to $A$.  Conversely, given $U\in \Gamma_c(A\times \mathscr G\skel 0)$, we can define $f$ by
\[f(x) = \begin{cases}\pi\circ (\dom|_U)\inv (x), & \text{if}\ x\in \dom(U)\\ 0, & \text{else}\end{cases}\] and $U=U_f$.  Thus $\gamma$ is an isomorphism.
\end{proof}

We can make $\til A$ into a $\Gamma_c(\mathscr G)$-module as follows.  Define $p\colon \til A\to E(\Gamma_c(\mathscr G))$ by $p(f)= \supp(f)$.  The action is given by
\[(Uf)(x) = \begin{cases} f(\dom((\ran|_U)\inv(x))), & \text{if}\ x\in \ran(U)\\ 0, & \text{else}\end{cases}\] for $U\in \Gamma_c(\mathscr G)$ and $x\in \mathscr G\skel 0$.

\begin{Prop}\label{p:ismodule}
The commutative inverse semigroup $\til A$ is a $\Gamma_c(\mathscr G)$-module with respect to $p$ and the action $(U,f)\mapsto Uf$.
\end{Prop}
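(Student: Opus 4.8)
The plan is to verify the three defining features of a $\Gamma_c(\mathscr G)$-module in turn: that $p$ is an idempotent bijective homomorphism, that $(U,f)\mapsto Uf$ is a total left action by endomorphisms, and that the two compatibility identities $p(Uf)=Up(f)U^*$ and $p(f)f=f$ hold. Since $\til A$ was already shown to be a commutative inverse semigroup in Proposition~\ref{p:bisections}, these are all that remain.

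First I would record the bookkeeping device that organizes every computation. Each $U\in\Gamma_c(\mathscr G)$ induces a homeomorphism $\theta_U\colon \dom(U)\to\ran(U)$ between compact open subsets of $\mathscr G\skel 0$, namely $\theta_U(\dom(g))=\ran(g)$ for $g\in U$; this is well defined precisely because $U$ is a bisection. In these terms the action is transport of functions, $(Uf)(x)=f(\theta_U\inv(x))$ for $x\in\ran(U)$ and $0$ otherwise, and $U\mapsto\theta_U$ is a homomorphism from $\Gamma_c(\mathscr G)$ into the symmetric inverse monoid $I_{\mathscr G\skel 0}$ (one checks $\theta_{UV}=\theta_U\theta_V$ and $\theta_{U^*}=\theta_U\inv$ directly from the bisection product). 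With this description well-definedness is immediate: $Uf$ is locally constant, being a composite of $f$ with the homeomorphism $\theta_U\inv$, and its support is $\theta_U(\supp(f)\cap\dom(U))$, which is compact, so $Uf\in\til A$. The endomorphism property $U(fg)=(Uf)(Ug)$ is a pointwise identity holding because $A$ is a group (so $fg$ vanishes exactly where $f$ or $g$ does) and $Uf$ is obtained from $f$ by precomposition with a bijection; preservation of the involution is then automatic for a homomorphism of inverse semigroups.

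For the homomorphism $p$, I would note $\supp(fg)=\supp(f)\cap\supp(g)=\supp(f)\supp(g)$ (again because $A$ is a group), so $p(fg)=p(f)p(g)$; and $p$ is idempotent bijective because the idempotents of $\til A$ are exactly the $\{0,1\}$-valued functions, that is, the indicators of compact open subsets of $\mathscr G\skel 0$, which $p$ carries bijectively onto $E(\Gamma_c(\mathscr G))$. The action identity $(UV)f=U(Vf)$ then reduces to $\theta_{UV}=\theta_U\theta_V$ together with associativity of composition, once the regions on which the two sides are nonzero are matched. The identity $p(Uf)=Up(f)U^*$ is the equality of the two compact open subsets $\supp(Uf)=\theta_U(\supp(f)\cap\dom(U))$ and $Up(f)U^*$ of $\mathscr G\skel 0$, the latter being the standard description of the conjugate of the idempotent $p(f)$ by the bisection $U$. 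Finally $p(f)f=f$ holds because $p(f)=\supp(f)$ acts as the identity partial homeomorphism on its own domain, so $p(f)f$ is just $f$ restricted to $\supp(f)$, which is $f$.

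The only step calling for genuine care is the action identity $(UV)f=U(Vf)$: the remaining verifications are pointwise and essentially formal, but here the ``$0$ elsewhere'' convention must be reconciled with the domains of $\theta_U$, $\theta_V$, and $\theta_{UV}$. Concretely, I would check that a point $x$ lies in $\ran(UV)=\theta_U(\ran(V)\cap\dom(U))$ if and only if $x\in\ran(U)$ and $\theta_U\inv(x)\in\ran(V)$, which is exactly the condition under which both sides are nonzero; on that set the two sides agree by associativity, and off it both vanish.
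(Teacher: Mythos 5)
Your proposal is correct and follows essentially the same route as the paper: the paper's definition of $Uf$ is already transport of $f$ along the partial homeomorphism $\ran\circ(\dom|_U)\inv$ (your $\theta_U$), and both arguments verify the module axioms by the same pointwise support computations — the paper's unique factorization $k=gh$ of arrows in $UV$ is exactly your identity $\theta_{UV}=\theta_U\theta_V$, and its description of $U\supp(f)U^*$ via arrows $g\colon y\to x$ in $U$ is your ``standard description'' of the conjugated idempotent. Your packaging via the homomorphism $U\mapsto\theta_U$ into $I_{\mathscr G\skel 0}$ is a tidy bookkeeping device, but the mathematical content is the same.
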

\begin{proof}
The idempotents of $\til A$ are the characteristic functions $1_U$ with $U\subseteq \mathscr G\skel 0$ compact open and $p(1_U)=U$.  Since $\supp(fg)=\supp(f)\cap \supp(g)=\supp(f)\supp(g)$, we have that $p$ is an idempotent bijective semigroup homomorphism.  If $W\subseteq \mathscr G\skel 0$ is compact open and $U\in \Gamma_c(\mathscr G)$, then $UWU^*$ consists of those $x\in \mathscr G\skel 0$ such that there is an arrow $g\colon y\to x$ in $U$ with $y\in W$.  The support of $Uf$ consists of those $x\in \mathscr G\skel 0$ such that there is an arrow $g\colon y\to x$ with $f(y)\neq 0$.  Hence $\supp(Uf) = U\supp(f)U^*$.  Also notice that if $U\subseteq \mathscr G\skel 0$ is compact open, then $Uf = 1_Uf$.  It follows that $p(f)f= 1_{\supp(f)}f=f$.  It remains to check that $(U,f)\mapsto Uf$ is a semigroup action by endomorphisms.

Clearly, $U(fg)=(Uf)(Ug)$ from the definition.  If $U,V\in \Gamma_c(\mathscr G)$, then $x\in \supp(U(Vf))$ if and only if there is $g\colon y\to x$ with $g\in U$ and $h\in V$ with $h\colon z\to y$ with $f(z)\neq 0$, in which case $(U(Vf))(x)=f(z)$.  But $(UV)f(x)$ is non-zero if and only if there is $k\colon z\to x$ in $UV$ with $f(z)\neq 0$, in which case $(UV)f(x) = f(z)$.  But then $k=gh$ for unique $g\in U$ and $h\in V$ with, say, $g\colon y\to x$ and $h\colon z\to y$.  We deduce that $(UV)f(x) = U(Vf(x))$.  This concludes the proof that $\til A$ is a $\Gamma_c(\mathscr G)$-module.
\end{proof}

Given an extension \eqref{eq:Atwist}, the question of whether it is central (i.e., satisfies \eqref{eq:central}) can be determined from the corresponding $\Gamma_c(\mathscr G)$-module structure on $\Gamma_c(A\times \mathscr G\skel 0)$ in the inverse semigroup extension
\begin{equation}\label{eq:twist.inverse.version}
\Gamma_c(A\times\mathscr G\skel 0)\xrightarrow{\Gamma_c(\iota)} \Gamma_c(\Sigma)\xrightarrow{\Gamma_c(\p)} \Gamma_c(\mathscr G).
\end{equation}

\begin{Prop}\label{p:is.central}
An extension \eqref{eq:Atwist} is central if and only if the semigroup isomorphism $\gamma\colon \til A\to \Gamma_c(A\times \mathscr G\skel 0)$ of Proposition~\ref{p:bisections} is an isomorphism of $\Gamma_c(\mathscr G)$-modules with respect to the module structure on $\Gamma_c(A\times \mathscr G\skel 0)$ coming from the extension \eqref{eq:twist.inverse.version}.
\end{Prop}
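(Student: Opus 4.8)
The plan is to reduce both directions to a single element-wise computation of the conjugation that defines the $\Gamma_c(\mathscr G)$-module action on $\Gamma_c(A\times\mathscr G\skel 0)$ coming from the extension \eqref{eq:twist.inverse.version}. Without loss of generality I would assume $\iota$ is an inclusion, so that each $\iota(a,x)$ is a loop at $x$ in $\Sigma$ and $\pinv(\mathscr G\skel 0) = A\times\mathscr G\skel 0$. Recall that this module structure is obtained by fixing a set-theoretic section $j$ of $\Gamma_c(\p)$ and setting $U\cdot K = \Gamma_c(\iota)\inv(j(U)\,\Gamma_c(\iota)(K)\,j(U)^*)$, and that it is independent of the choice of $j$. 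Since $\gamma$ is already a semigroup isomorphism and one checks at once that $\Gamma_c(\p)\Gamma_c(\iota)(\gamma(f)) = \supp(f) = p(f)$, the map $\gamma$ is an isomorphism of $\Gamma_c(\mathscr G)$-modules exactly when it intertwines the two actions, i.e.\ when $\gamma(Uf) = U\cdot\gamma(f)$ for all $U\in\Gamma_c(\mathscr G)$ and $f\in\til A$. Thus the whole statement comes down to comparing $\gamma(Uf)$ with the conjugation $j(U)\,\iota(U_f)\,j(U)^*$.

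First I would treat the forward direction. Assuming \eqref{eq:Atwist} is central, fix $U$ and $f$, and for $g\in U$ let $s_g$ be the unique element of $j(U)$ lying over $g$ (unique because $j(U)$ is a bisection and $\p|_{j(U)}$ is injective by Proposition~\ref{p:local.homeo}). Expanding $j(U)\,\iota(U_f)\,j(U)^*$ as a product of bisections, a term $s_g\,\iota(f(z),z)\,s_{g'}^*$ is defined only when $z=\dom(g)$ and $\dom(g)=\dom(g')$; since $U$ is a bisection the latter forces $g=g'$. Hence every surviving term has the form $s_g\,\iota(f(x),x)\,s_g^*$ with $x=\dom(g)\in\supp(f)$, a loop at $y=\ran(g)$. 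Centrality \eqref{eq:central} applied to $s_g$ with $a=f(x)$ gives $s_g\,\iota(f(x),x)=\iota(f(x),y)\,s_g$, so the term collapses to $\iota(f(x),y)\,s_g s_g^*=\iota(f(x),y)$. Reparametrising the resulting set by $y\in\ran(U)$ via $g=(\ran|_U)\inv(y)$ and comparing with the definition of the action on $\til A$ identifies $j(U)\,\iota(U_f)\,j(U)^*$ with $\iota(U_{Uf})$, whence $U\cdot\gamma(f)=\gamma(Uf)$.

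For the converse I would run the same computation without invoking centrality and exploit the freedom in $j$. Given $s\in\Sigma$ and $a\in A$, choose a compact open bisection $W\ni s$ of $\Sigma$, put $U=\p(W)$, and, using that the module structure does not depend on the section, choose $j$ with $j(U)=W$; also take $f=a\cdot 1_V$ for a compact open $V\ni\dom(\p(s))$. Then $s$ is the element $s_g$ over $g=\p(s)$, and the computation above shows that the surviving term over $y=\ran(\p(s))$ is $s\,\iota(a,x)\,s^*=\iota(b,y)$ for the unique $b\in A$ so determined, where $x=\dom(\p(s))$. Consequently $\gamma\inv(U\cdot\gamma(f))$ takes the value $b$ at $y$, whereas $Uf$ takes the value $a$ there; the hypothesis $\gamma(Uf)=U\cdot\gamma(f)$ forces $b=a$, that is $s\,\iota(a,x)\,s^*=\iota(a,y)$. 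Multiplying on the right by $s$ and using $s^*s=x$ yields $s\,\iota(a,x)=\iota(a,y)\,s$, which is precisely \eqref{eq:central}. As $s$ and $a$ were arbitrary, the extension is central.

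I expect the main obstacle to be the bookkeeping in the element-wise conjugation: correctly deciding when the triple products in $j(U)\,\iota(U_f)\,j(U)^*$ are defined (the domain/range matching that forces $g=g'$) and then matching the surviving values against the piecewise formula for the $\til A$-action. The other delicate point is the converse's appeal to independence of the module structure from $j$, which is what lets me arrange $j(U)=W$ so that an arbitrary $s\in\Sigma$ appears as some $s_g$; this is exactly the ingredient that converts the pointwise value-matching back into the centrality identity \eqref{eq:central}.
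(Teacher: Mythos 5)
Your proposal is correct and takes essentially the same route as the paper's own proof: both fix an idempotent-preserving section $j$, compute the conjugation $j(U)\,\iota(U_f)\,j(U)^*$ element-wise to see that equivariance of $\gamma$ amounts to the identity $\iota(f(\dom(\p(s))),\ran(\p(s)))\,s = s\,\iota(f(\dom(\p(s))),\dom(\p(s)))$ for all $s\in j(U)$ with $\dom(\p(s))\in\supp(f)$, and for the converse both exploit the independence of the module structure from the choice of $j$ to arrange that an arbitrary $s\in\Sigma$ lies in $j(U)$ (the paper phrases this as ``we may assume that $j(V)$ was chosen to contain $s$''), testing against $f=a1_{\dom(V)}$. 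The only cosmetic difference is that you pick the bisection $W\ni s$ upstairs in $\Sigma$ and set $U=\p(W)$, whereas the paper picks $V\ni\p(s)$ downstairs first; this does not change the argument.
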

\begin{proof}
Note that $\p\iota(\gamma(f))=\p(\iota(U_f)) = \supp(f)$ by construction.  So we just need to check that $\Gamma_c(\mathscr G)$-equivariance is equivalent to centrality.  Fix a section $j\colon \Gamma_c(\mathscr G)\to \Gamma_c(\Sigma)$ of $\Gamma_c(\p)$ with $j|_{E(\Gamma_c(\mathscr G))}= (\Gamma_c(\p)|_{E(\Gamma_c(\Sigma))})\inv$.  Then the action of $V\in \Gamma_c(\mathscr G)$ on $\gamma(f)=U_f$ is given by \[VU_f = \iota\inv(j(V)\iota(U_f)j(V)^*).\]  This set consists of all $(a,x)$ such that there is $y\in \supp(f)$ and $s\in j(V)$ with $\p(s)\colon y\to x$ and $(a,x)=\iota\inv(s\iota(f(y),y)s\inv)$.  On the other hand, $(Vf)(x)\neq 0$ if and only if there is $g\in V$ and $y\in \supp(f)$, with $g\colon y\to x$ and then $(Vf)(x)=f(y)$.  Since $\p|_{j(V)}\colon j(V)\to V$ is a homeomorphism by Proposition~\ref{p:local.homeo}, this is equivalent to there being $s\in j(V)$ and $y\in \supp(f)$ with $\p(s)\colon y\to x$ and then $(Vf)(x)=f(y)$.  Thus $VU_f=U_{Vf}=\gamma(Vf)$ if and only if $\iota(f(\dom(\p(s))),\ran(\p(s)))=s\iota(f(\dom(\p(s))),\dom(\p(s)))s\inv$ for all $s\in j(V)$ with $\dom(\p(s))\in \supp(f)$.  This equivalent to
\begin{equation}\label{eq:check.equiv}
\iota(f(\dom(\p(s))),\ran(\p(s)))s= s\iota(f(\dom(\p(s))),\dom(\p(s)))
\end{equation}
for all $s\in j(V)$ with $\dom(\p(s))\in \supp(f)$, which clearly is satisfied if the extension is central.  Hence if the extension is central, $\gamma$ is an isomorphism of modules.

Conversely, if $\gamma$ is a module isomorphism and $s\in \Sigma$, $a\in A$, then we can choose a bisection $V$ containing $\p(s)$ and we may assume that $j(V)$ was chosen to contain $s$.  We can define $f = a1_{\dom(V)}$. Then, by \eqref{eq:check.equiv}, the equivariance of $\gamma$ implies that $\iota(a,\ran(\p(s)))s= \iota(f(\dom(\p(s))),\ran(\p(s)))s =s\iota(f(\dom (\p(s))),\dom(\p(s))) = s\iota(a,\dom(\p(s)))$.  We conclude that the extension is central.
\end{proof}

We may now deduce from Theorem~\ref{t:equiv.cats}, Theorem~\ref{t:preserve.extensions} and Proposition~\ref{p:is.central} the following.

\begin{Cor}\label{c:classify.part}
There is a bijection between equivalence classes of $A$-twists over $\Gamma$ and extensions $\til A\to T\to\Gamma_c(\mathscr G)$ with $T$ a Boolean inverse semigroup and with the $\Gamma_c(\mathscr G)$-module structure on $\til A$ as per Proposition~\ref{p:ismodule}.
\end{Cor}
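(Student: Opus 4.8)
The plan is to read off the bijection directly from the categorical equivalence together with the centrality criterion we have just proved, so that the corollary becomes an assembly of Theorem~\ref{t:equiv.cats}, Theorem~\ref{t:preserve.extensions} and Proposition~\ref{p:is.central}. Given an $A$-twist as in \eqref{eq:Atwist}, I would first apply $\Gamma_c$ and invoke Theorem~\ref{t:preserve.extensions}(1) to obtain an extension of Boolean inverse semigroups
\[\Gamma_c(A\times\mathscr G\skel 0)\to \Gamma_c(\Sigma)\to \Gamma_c(\mathscr G),\]
where $\Gamma_c(\Sigma)$ is Boolean because $\Sigma$ is ample (which holds by the extension axioms, cf.\ Proposition~\ref{p:ext.hausdorff} and the remarks preceding it). Precomposing with the isomorphism $\gamma\inv$ of Proposition~\ref{p:bisections} identifies the left-hand term with $\til A$, yielding an extension $\til A\to \Gamma_c(\Sigma)\to \Gamma_c(\mathscr G)$ of the desired shape. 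By Proposition~\ref{p:is.central}, the twist is central precisely when $\gamma$ is an isomorphism of $\Gamma_c(\mathscr G)$-modules, i.e.\ exactly when the module structure this extension induces on $\til A$ is the one of Proposition~\ref{p:ismodule}.

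For the reverse passage, I would start from an extension $\til A\xrightarrow{\iota}T\xrightarrow{\p}\Gamma_c(\mathscr G)$ with $T$ Boolean and carrying the module structure of Proposition~\ref{p:ismodule}, and apply $\mathcal G$, using Theorem~\ref{t:preserve.extensions}(2) to get an extension of ample groupoids $\mathcal G(\til A)\to \mathcal G(T)\to \mathcal G(\Gamma_c(\mathscr G))$. The natural isomorphism $\eta$ of Theorem~\ref{t:duality} identifies $\mathcal G(\Gamma_c(\mathscr G))$ with $\mathscr G$, while $\mathcal G(\til A)\cong \mathcal G(\Gamma_c(A\times\mathscr G\skel 0))\cong A\times\mathscr G\skel 0$ via $\mathcal G(\gamma)$ and $\eta_{A\times\mathscr G\skel 0}$. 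This produces an exact sequence $A\times\mathscr G\skel 0\to\mathcal G(T)\to\mathscr G$, and Proposition~\ref{p:is.central} shows that the hypothesis on the module structure is exactly what forces this extension to be central, hence an $A$-twist.

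It then remains to check that these two assignments descend to mutually inverse bijections on equivalence classes. Since $\Gamma_c$ and $\mathcal G$ are quasi-inverse equivalences by Theorem~\ref{t:equiv.cats}, any isomorphism realizing an equivalence of $A$-twists is carried to an isomorphism of the corresponding inverse semigroup extensions, and conversely; the naturality of $\eta$ and $\varepsilon$ guarantees that the resulting squares commute and that each round trip returns the original extension up to equivalence. The one point requiring genuine care is that an equivalence of $A$-twists fixes the outer terms $A\times\mathscr G\skel 0$ and $\mathscr G$ \emph{by the identity}, so I must verify that applying $\Gamma_c$ and then $\gamma$ turns the identity on $A\times\mathscr G\skel 0$ into the identity on $\til A$ (and the identity on $\mathscr G$ into the identity on $\Gamma_c(\mathscr G)$), and dually under $\mathcal G$; this uses the explicit formula for $\gamma$ together with the naturality statements.

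The main obstacle is bookkeeping rather than a new idea: all the substantive content---exactness of the functors and the translation of centrality into the module condition---already resides in Theorems~\ref{t:equiv.cats} and~\ref{t:preserve.extensions} and Proposition~\ref{p:is.central}. The delicate step is confirming that the chosen identifications of the outer terms ($\gamma$ on the left, $\eta$ on the right) are compatible with the two notions of equivalence, so that the correspondence is genuinely well defined on equivalence classes and not merely a bijection between isomorphism classes of middle objects.
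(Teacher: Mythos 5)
Your proposal is correct and follows exactly the paper's route: the paper states this corollary as an immediate consequence of Theorem~\ref{t:equiv.cats}, Theorem~\ref{t:preserve.extensions} and Proposition~\ref{p:is.central}, which is precisely the assembly you carry out (applying $\Gamma_c$ and $\mathcal G$, identifying the outer terms via $\gamma$ and $\eta$, and translating centrality into the module condition). Your write-up is in fact more detailed than the paper's, which leaves the bookkeeping about equivalence classes implicit.
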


To complete our classification of $A$-twists by cohomology classes, we need that every extension of $\til A$ by $\Gamma_c(\mathscr G)$ is a Boolean inverse semigroup. In fact, it turns out every extension of a commutative Boolean inverse semigroup by a Boolean inverse semigroup is Boolean.

%
%

\begin{Prop}\label{p:all.good}
Let $S$ be a Boolean inverse semigroup and $K$ a commutative Boolean inverse semigroup.  If $K\xrightarrow{\,\,\iota\,\,} T\xrightarrow{\,\,\p\,\,} S$ is any extension of $S$ by $K$, then $T$ is a Boolean inverse semigroup.
\end{Prop}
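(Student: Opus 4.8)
The plan is to verify the three defining properties of a Boolean inverse semigroup for $T$: a zero, a Boolean algebra of idempotents, and joins of orthogonal pairs. First I would record the idempotent structure. Since $\p$ and $\iota$ are homomorphisms, $\p|_{E(T)}$ and $\iota|_{E(K)}$ are bijective homomorphisms of meet semilattices, hence order isomorphisms (the inverse of a bijective meet-homomorphism is again a meet-homomorphism). Consequently $E(K)\cong E(T)\cong E(S)$ as posets, and as order isomorphisms preserve all existing finite joins, the distributive law, and relative complements, the (generalized) Boolean algebra structure of $E(S)$ transports to $E(T)$. Moreover, because $K$ is commutative and $p=\p\iota$ is idempotent bijective, every $k\in K$ has $kk^*=k^*k=k_{p(k)}$, so $K$ is a Clifford semigroup: a semilattice of abelian groups $K_e=p^{-1}(e)$ with identity $k_e$, in which the semigroup involution computes group inverses. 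I will use this structure repeatedly.

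Next I would produce the zero. Let $0_T=\iota(0_K)$, the least idempotent of $E(T)$, where $0_K$ is the zero of the Boolean semigroup $K$. For $t\in T$ one has $\p(0_Tt)=0_S$, so $0_Tt=\iota(k)$ for some $k\in K$, and $(0_Tt)(0_Tt)^*=0_Ttt^*0_T=0_T$ since $0_T\le tt^*$. Thus $\iota(kk^*)=\iota(0_K)$, forcing $kk^*=0_K$ and hence $k=kk^*k=0_K$; therefore $0_Tt=0_T$, and dually $t0_T=0_T$. So $0_T$ is a multiplicative zero and, with the previous paragraph, $T$ is an inverse semigroup with zero whose idempotents form a Boolean algebra.

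The substantive step is the existence of orthogonal joins, and here I would work in coordinates. Fix a normalized section $j\colon S\to T$ with $j|_{E(S)}=(\p|_{E(T)})\inv$ and normalized cocycle $c$, so that $j(s)j(s')=\iota(c(s,s'))j(ss')$ and every $t\in T$ is uniquely $\iota(k)j(\p(t))$. Given orthogonal $a,b\in T$, applying $\p$ to $ab^*=0=b^*a$ shows $\sigma:=\p(a)$ and $\tau:=\p(b)$ are orthogonal in $S$, so $\rho:=\sigma\vee\tau$ exists. Write $a=\iota(\kappa)j(\sigma)$, $b=\iota(\lambda)j(\tau)$ with $\kappa\in K_{\sigma\sigma^*}$, $\lambda\in K_{\tau\tau^*}$. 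Since $p(c(\rho,\sigma^*\sigma))=\rho\sigma^*\sigma\rho^*=\sigma\sigma^*$ and likewise $p(c(\rho,\tau^*\tau))=\tau\tau^*$, the elements $\kappa\,c(\rho,\sigma^*\sigma)^*\in K_{\sigma\sigma^*}$ and $\lambda\,c(\rho,\tau^*\tau)^*\in K_{\tau\tau^*}$ lie over the orthogonal idempotents $\sigma\sigma^*\perp\tau\tau^*$, so their join $\mu\in K_{\rho\rho^*}$ exists in $K$ (with $p(\mu)=\sigma\sigma^*\vee\tau\tau^*=\rho\rho^*$). I claim $x:=\iota(\mu)j(\rho)$ is the join $a\vee b$. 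Using that $a^*a$ is the idempotent over $\sigma^*\sigma$, i.e. $a^*a=j(\sigma^*\sigma)$, together with $j(\rho)j(\sigma^*\sigma)=\iota(c(\rho,\sigma^*\sigma))j(\sigma)$, the identity $\mu k_{\sigma\sigma^*}=\kappa\,c(\rho,\sigma^*\sigma)^*$, and Clifford arithmetic, one computes $xa^*a=\iota(\kappa)j(\sigma)=a$, and symmetrically $xb^*b=b$; hence $a,b\le x$.

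For minimality, suppose $y\ge a,b$. Then $\p(y)\ge\sigma,\tau$, so $\p(y)\ge\rho$, and the restriction $y_0:=y(x^*x)$ satisfies $\p(y_0)=\rho$ and still $y_0\ge a,b$ (as $a^*a,b^*b\le x^*x$). Writing $y_0=\iota(\nu)j(\rho)$, the equalities $y_0a^*a=a=xa^*a$ and $y_0b^*b=b=xb^*b$, together with uniqueness of the decomposition, give $\nu k_{\sigma\sigma^*}=\mu k_{\sigma\sigma^*}$ and $\nu k_{\tau\tau^*}=\mu k_{\tau\tau^*}$; since $\rho\rho^*=\sigma\sigma^*\vee\tau\tau^*$ and $K$ is Boolean, an element of $K_{\rho\rho^*}$ is determined by these two restrictions, so $\nu=\mu$ and $x=y_0\le y$. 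Thus $x=a\vee b$ and $T$ admits orthogonal joins. I expect the main obstacle to be the correct definition of $\mu$: the naive guess $\kappa\vee\lambda$ fails because $c(\rho,\sigma^*\sigma)$ need not be idempotent (only $c(e,f)$ with both arguments idempotent is, by Proposition~\ref{p:props.normalized.cocycle}), so each piece must be twisted by the appropriate inverse cocycle value inside $K_{\sigma\sigma^*}$ (resp. $K_{\tau\tau^*}$). It is precisely the Clifford structure of $K$ that supplies these inverses, and the torsor structure of $\p\inv(\rho)$ that yields the uniqueness needed for minimality.
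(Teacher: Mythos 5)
Your proof is correct and follows essentially the same route as the paper: both construct the candidate join in coordinates as $\iota\bigl(\kappa\,c(\rho,\sigma^*\sigma)^*\vee\lambda\,c(\rho,\tau^*\tau)^*\bigr)j(\rho)$, using exactly the twisted orthogonal join in $K$ that you identify as the key point, and verify by the same cocycle computation that it lies above $a$ and $b$. The only differences are minor: the paper omits the routine check that $\iota(0_K)$ is a multiplicative zero, and instead of your direct minimality argument it invokes Lawson's Proposition~1.4.18, by which any common upper bound $y$ already produces the join as $y(a^*a\vee b^*b)$.
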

\begin{proof}
Choose a section $j\colon S\to T$ with $j|_{E(S)} = (\p|_{E(T)})\inv$.  We can then turn $K$ into an $S$-module (with $p=\p\iota\colon K\to E(S)$);   denote by $c\colon S\times S\to K$ the corresponding normalized $2$-cocycle.  Since $E(T)\cong E(S)$ is a Boolean algebra, we just need to show that if $t,u\in T$ are orthogonal, then they have a join.  In fact, it is enough to show they have a common upper bound $y$ for then, by~\cite[Proposition~1.4.18]{Lawson}, one has $y(t^*t\vee u^*u) = yt^*t\vee yu^*u = t\vee u$. Put $\til t=\p(t)$ and $\til u=\p(u)$.  Then the fact that $t,u$ are orthogonal is equivalent to $t^*t,u^*u$ and $tt^*,uu^*$ being orthogonal.  This in turn is equivalent to $\til t,\til u$ being orthogonal since $\p$ is idempotent bijective.  Thus $x=\til t\vee \til u$ exists.  We can write $t=\iota(k_t)j(\til t)$ and $u=\iota(k_u)j(\til u)$ for unique $k_t,k_u\in K$ with $p(k_t) = \til t\til t^*$ and $p(k_u)=\til u\til u^*$.  Then $p(k_tc(x,\til t^*\til t)^*) = \til t\til t ^*$ and $p(k_uc(x,\til u^*\til u)^*)=\til u\til u^*$ and hence, since $p$ is idempotent bijective, we have $k=k_tc(x,\til t^*\til t)^* \vee k_uc(x,\til u^*\til u)^*$ is defined and $p(k)=\til t\til t^*\vee \til u\til u^*=  xx^*$ (since $p$ is additive, being idempotent bijective).  Let $y=\iota(k)j(x)$.  We claim that $t,u\leq y$.  The argument is symmetric, so we just handle the case of $t$.

Note that $\iota(K)$ centralizes $E(T)$ as $\iota(k)e=\iota (kf)=\iota(fk)=e\iota(k)$ where $f\in E(K)$ is unique with $\iota(f)=e$ (using that $\iota$ is idempotent bijective).  We then have \[yt^*t = \iota(k)j(x)j(\til t^*\til t)=\iota(k)\iota(c(x,\til t^*\til t))j(x\til t^*\til t) = \iota(kc(x,\til t^*\til t))j(\til t).\]  Therefore, it suffices to show that $kc(x,\til t^*\til t) = k_t$.  Note that $p(c(x,\til t^*\til t))= \til t^*\til t$ and so $k_uc(x,\til u^*\til u)^*c(x,\til t^*\til t) =0$. Therefore, $kc(x,\til t^*\til t) = k_tc(x, \til t^*\til t)^*c(x, \til t^*\til t) =k_t$, since multiplication distributes over joins in a Boolean inverse semigroup, as required.  This completes the proof.
\end{proof}

Putting together Theorem~\ref{t:equiv.cats}, Theorem~\ref{t:preserve.extensions}, Corollary~\ref{c:classify.part}, Proposition~\ref{p:all.good},  and~\cite[Section~7]{lausch}, we have the following classification of $A$-twists.

\begin{Thm}\label{t:classify}
Let $\mathscr G$ be an ample groupoid and $A$ a discrete abelian group.  Then there is a bijection between equivalence classes of discrete $A$-twists and $H^2(\Gamma_c(\mathscr G), C_c(\mathscr G\skel 0, A\cup \{0\}))$.
\end{Thm}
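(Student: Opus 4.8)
The plan is to assemble the equivalences already established, thereby reducing the classification of twists to Lausch's cohomological classification of abelian extensions. By Corollary~\ref{c:classify.part}, equivalence classes of $A$-twists over $\mathscr G$ are in bijection with equivalence classes of extensions $\til A\xrightarrow{\,\,\iota\,\,} T\xrightarrow{\,\,\p\,\,}\Gamma_c(\mathscr G)$ of Boolean inverse semigroups in which the induced $\Gamma_c(\mathscr G)$-module structure on $\til A$ is exactly the one of Proposition~\ref{p:ismodule}. This is where centrality of the twist is used: Proposition~\ref{p:is.central} is precisely what pins down the module structure to the prescribed one. So the first step is merely to record that the left-hand side of the desired bijection has already been transported to a purely inverse-semigroup-theoretic problem, namely classifying extensions of the fixed $\Gamma_c(\mathscr G)$-module $\til A$ by $\Gamma_c(\mathscr G)$.

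The second step is to match these Boolean extensions with the abelian extensions classified by Lausch. The key input here is Proposition~\ref{p:all.good}: any extension of $\Gamma_c(\mathscr G)$ by the commutative Boolean inverse semigroup $\til A$ has its middle term $T$ automatically Boolean. Consequently the Boolean extensions appearing in Corollary~\ref{c:classify.part} are precisely the abelian extensions in the sense of~\eqref{eq:extension} realizing the fixed module structure, with nothing lost by dropping the Boolean hypothesis. One also observes that the two notions of equivalence coincide: an isomorphism of inverse semigroups between two Boolean inverse semigroups automatically preserves the Boolean structure, since joins of orthogonal elements and relative complements are determined by the natural partial order, hence by the multiplication. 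Thus an equivalence of extensions in the Boolean setting is the same datum as an equivalence in Lausch's setting.

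Finally, I would invoke Lausch's second cohomology classification~\cite[Section~7]{lausch}: for the fixed $\Gamma_c(\mathscr G)$-module $\til A$, equivalence classes of abelian extensions of $\Gamma_c(\mathscr G)$ by $\til A$ are in bijection with $H^2(\Gamma_c(\mathscr G),\til A)$, the class attached to an extension being the cohomology class of the normalized $2$-cocycle obtained from a section $j$ with $j|_{E(\Gamma_c(\mathscr G))}=(\p|_{E(T)})\inv$. Since $\til A=C_c(\mathscr G\skel 0,A\cup\{0\})$ by Proposition~\ref{p:bisections}, composing the three bijections yields the asserted correspondence between equivalence classes of discrete $A$-twists and $H^2(\Gamma_c(\mathscr G),C_c(\mathscr G\skel 0,A\cup\{0\}))$.

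The genuinely substantive step is the middle one, namely knowing that Lausch's extensions are automatically Boolean, and this has already been secured by Proposition~\ref{p:all.good}. Without it there would be a real gap: Lausch's abstract realization $T=\{(k,s)\in \til A\times\Gamma_c(\mathscr G)\mid p(k)=ss^*\}$ of a cohomology class produces only an inverse semigroup a priori, and one would have to verify by hand that $E(T)$ is a Boolean algebra and that $T$ admits joins of orthogonal pairs. Everything else is bookkeeping, namely checking that the module structures and the equivalence relations line up across the three translations, and the cited results dispatch each of these checks.
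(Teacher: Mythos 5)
Your proposal is correct and takes essentially the same route as the paper: the paper's proof of Theorem~\ref{t:classify} is exactly the assembly of Corollary~\ref{c:classify.part} (itself obtained from the categorical equivalence, exactness of the functors, and the centrality criterion of Proposition~\ref{p:is.central}), Proposition~\ref{p:all.good}, and Lausch's classification of abelian extensions with a fixed module structure. In particular, your identification of Proposition~\ref{p:all.good} as the substantive step---guaranteeing that Lausch's realization of a cohomology class, a priori only an inverse semigroup, is automatically Boolean so that nothing is lost in passing between the two classification problems---is precisely the point the paper relies on.
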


Note that the class of the trivial $2$-cocycle corresponds to the trivial extension $A\times \mathscr G\skel 0\to A\times \mathscr G\to \mathscr G$.  Indeed, $\Gamma_c(A\times \mathscr G)$ can be identified with the full restricted semidirect product $\til A\bowtie \Gamma_c(\mathscr G)$ via the mapping that sends $(f,U)\in \til A\bowtie \Gamma_c(\mathscr G)$ with $\supp(f)=\ran(U)$ to the compact open bisection $\{(f(\ran(g)),g)\mid g\in U\}$ of $A\times \mathscr G$, as is easily checked.     The abelian group structure on $H^2(\Gamma_c(\mathscr G),C_c(\mathscr G\skel 0, A\cup \{0\}))$ corresponds to the Baer sum operation on $A$-twists over $\mathscr G$.
 We state the result, but omit many of the routine verifications as it will not be needed in the sequel.

If
\[\begin{tikzcd}
A\times \mathscr G\skel 0\arrow{r}{\iota} & \Sigma\arrow{r}{\p} & \mathscr G,  &
A\times \mathscr G\skel 0 \arrow{r}{\iota'}& \Sigma'\arrow{r}{\p'} & \mathscr G
\end{tikzcd}\]
are  $A$-twists,  then since $\p$ and $\p'$ are local homeomorphisms by Proposition~\ref{p:local.homeo}, the projections from the pullback  $\Sigma\times_{\p,\p'}\Sigma'$ along $\p,\p'$ to $\Sigma$ and $\Sigma'$ are local homeomorphisms and hence the pullback is an ample groupoid.
The \emph{Baer sum} $\Sigma\oplus \Sigma'$ is the groupoid $\Sigma\times _{\p,\p'}\Sigma'/A$ where $A$ acts on $\Sigma\times_{\p,\p'}\Sigma'$ by $a(s,s') = (a\inv s, as')$ and we use the quotient topology. This a properly discontinuous action of a discrete group and so the quotient map $(\Sigma\times _{\p,\p'}\Sigma')\to \Sigma\oplus \Sigma'$ is a covering map, hence a local homeomorphism.
This equivalence relation is compatible with multiplication and so gives a groupoid structure which makes $\Sigma\oplus \Sigma'$ an ample groupoid. We leave the straightforward verifications to the interested reader.   The inclusion $\kappa$ of $A\times \mathscr G\skel 0$ into $\Sigma\oplus \Sigma'$ sends $(a,x)$ to $[x,ax]$ and the quotient map $\rho$ from $\Sigma\oplus \Sigma'$ to $\mathscr G$ sends $[s,s']$ to $\p(s)=\p'(s')$. 

\begin{Thm}
The set of equivalence classes of $A$-twists over $\mathscr G$ under Baer sum is an abelian group isomorphic to $H^2(\Gamma_c(\mathscr G),C_c(\mathscr G\skel 0, A\cup \{0\}))$.
\end{Thm}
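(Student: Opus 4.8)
The plan is as follows. Theorem~\ref{t:classify} already provides a bijection $\Phi$ between equivalence classes of $A$-twists over $\mathscr G$ and $H^2(\Gamma_c(\mathscr G),\til A)$, and the latter is an abelian group. Transporting this group structure along $\Phi$ makes the set of twist classes an abelian group, so all that needs proving is that $\Phi$ carries Baer sum to the product in $H^2$, i.e.\ $\Phi(\Sigma\oplus \Sigma') = \Phi(\Sigma)\cdot \Phi(\Sigma')$; once this single identity is established, associativity, commutativity, the identity (the trivial twist) and inverses are all inherited from $H^2$. Since $\Gamma_c$ is an equivalence of categories (Theorem~\ref{t:equiv.cats}) taking extensions to extensions (Theorem~\ref{t:preserve.extensions}) and the class $\Phi(\Sigma)$ is by definition the Lausch class of the inverse semigroup extension $\Gamma_c(A\times\mathscr G\skel 0)\to \Gamma_c(\Sigma)\to \Gamma_c(\mathscr G)$ of Corollary~\ref{c:classify.part}, I would work entirely with normalized $2$-cocycles of these inverse semigroup extensions.

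Concretely, I would fix idempotent-preserving set-theoretic sections $j\colon \Gamma_c(\mathscr G)\to \Gamma_c(\Sigma)$ and $j'\colon \Gamma_c(\mathscr G)\to \Gamma_c(\Sigma')$, giving normalized cocycles $c,c'$ with $j(U)j(V)=\iota(c(U,V))j(UV)$ and likewise for $j'$, where I abusively write $\iota(k)$ for $\Gamma_c(\iota)(\gamma(k))$ via the identification $\gamma$ of Proposition~\ref{p:bisections}. The main construction is a ``product section'' of the Baer sum: for $U\in \Gamma_c(\mathscr G)$ the set $\{(s,s')\in j(U)\times j'(U)\mid \p(s)=\p'(s')\}$ is a compact open bisection of the pullback $\Sigma\times_{\p,\p'}\Sigma'$ mapping homeomorphically onto $U$, and I let $J(U)$ be its image in $\Sigma\oplus\Sigma'$ under the covering map. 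Using that $\p,\p'$ are local homeomorphisms (Proposition~\ref{p:local.homeo}), that $A$ acts freely, and that $j,j'$ preserve idempotents, I would check that $J(U)\in \Gamma_c(\Sigma\oplus\Sigma')$, that $\rho(J(U))=U$, and that $J$ is idempotent preserving, so that the resulting cocycle is normalized.

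The heart of the proof is the cocycle computation. Using the groupoid product $[s_1,s_1'][s_2,s_2']=[s_1s_2,s_1's_2']$ on the Baer sum together with the defining relations for $j,j'$, one finds fiberwise that $J(U)J(V)$ is represented by pairs of the form $(a\,t,\,a'\,t')$ with $t\in j(UV)$, $t'\in j'(UV)$, $a=c(U,V)(x)$ and $a'=c'(U,V)(x)$ for the relevant unit $x$. The antidiagonal quotient gives the identity $[a s,s']=[s,a s']$ for $a\in A$, and the inclusion satisfies $\kappa(a,\ran([s,s']))\,[s,s']=[s,a s']$; applying these pointwise over $\mathscr G\skel 0$ lets me slide the $A$-valued factor from the first coordinate into the second, yielding $J(U)J(V)=\kappa\big(c(U,V)\,c'(U,V)\big)\,J(UV)$. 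Hence the normalized cocycle of the Baer-sum extension relative to $J$ is exactly the pointwise product $c\cdot c'$, which is the group operation in $H^2(\Gamma_c(\mathscr G),\til A)$, proving $\Phi(\Sigma\oplus\Sigma')=\Phi(\Sigma)\Phi(\Sigma')$.

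I expect the main obstacle to be organizational bookkeeping rather than a deep idea: one must carefully track the three inclusions $\iota,\iota',\kappa$ and the identification $\gamma$ so that ``multiplication by $c(U,V)$'' really is the central $A$-action used to define the twists—this is where centrality, via Proposition~\ref{p:is.central}, and the description $\kappa(a,x)=[x,ax]$ enter—and one must confirm that sliding $c(U,V)$ across the antidiagonal is legitimate pointwise even though $c(U,V)\in\til A$ is a function rather than a single element of $A$. The secondary verifications, namely that $\Sigma\oplus\Sigma'$ is again a \emph{central} extension (so that $\Phi$ is defined on it), that $J$ is well defined and idempotent preserving, and that changing $j,j'$ alters $c\cdot c'$ only by a coboundary, are routine and can be dispatched with the tools already developed.
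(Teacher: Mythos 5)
Your proposal is correct and follows essentially the same route as the paper: both reduce the theorem to showing the bijection of Theorem~\ref{t:classify} sends Baer sum to the cocycle product, both build the same ``diagonal'' section $U\mapsto \{[g,g']\mid g\in j(U),\ g'\in j'(U)\}$ of the Baer-sum extension from idempotent-preserving sections $j,j'$, and both compute its normalized cocycle to be the pointwise product $c\cdot c'$ by sliding the $A$-factor across the antidiagonal identification $[as,s']=[s,as']$ and invoking $\kappa(a,x)=[x,ax]$. The only cosmetic difference is that the paper extracts the cocycle from the product $\til j(U)\til j(V)(\til j(UV))^*$ while you state the equivalent defining relation $J(U)J(V)=\kappa(\gamma(cc'(U,V)))J(UV)$ directly.
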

\begin{proof}
Using Theorem~\ref{t:classify} we just need to show that the cohomology class corresponding to the equivalence class of $\Sigma\oplus \Sigma'$ is the product of the cohomology classes corresponding to $\Sigma$ and $\Sigma'$.  Let $j\colon \Gamma_c(\mathscr G)\to \Gamma_c(\Sigma)$ and $j'\colon \Gamma_c(\mathscr G)\to \Gamma_c(\Sigma')$ be set theoretic sections with $j$ and $j'$ preserving idempotents.  Then the cohomology classes corresponding to $\Sigma$ and $\Sigma'$ are given by normalized $2$-cycles $c,c'$, respectively, with $j(U)j(V) = \iota(\gamma(c(U,V)))j(UV)$ and $j'(U)j'(V)=\iota'(\gamma(c'(U,V)))j'(UV)$ where $\gamma\colon \til A\to \Gamma_c(A\times \mathscr G\skel 0)$ is the isomorphism of Proposition~\ref{p:is.central}.  Define $\til j\colon \Gamma_c(\mathscr G)\to \Gamma_c(\Sigma\oplus \Sigma')$ by \[\til j(U) = \{[g,g']\mid g\in j(U), g'\in j'(U)\}.\]  This is a compact open set since it is the image of $j(U)\times_{\p,\p'} j'(U)$, which is compact open in $\Sigma\times_{\p,\p'}\Sigma'$.  It is a bisection because if $\dom([g,g']) = \dom([h,h'])$ with $g,h\in j(U)$ and $g',h'\in j'(U)$, then $\dom (g)=\dom(h)$ and $\dom(g')=\dom(h')$ and so $g=g'$ and $h=h'$.  Similarly, $\ran|_{\til j(U)}$ is injective.  By construction $\rho(\til j(U))=U$.  Also, if $U\subseteq \mathscr G\skel 0$, then $\til j(U)\subseteq (\Sigma\oplus \Sigma')\skel 0$.  Let us compute the normalized $2$-cocycle $\til c$ corresponding to $\til j$.

Let $U,V\in \Gamma_c(\mathscr G)$.  Then $\til j(U)\til j(V)(\til j(UV))^*$ consists of all composable products $[g,g'][h,h'][k\inv, (k')\inv]$ with $g\in j(U)$, $h\in j(V)$, $k\in j(UV)$, $g'\in j'(U)$, $h'\in j'(V)$ and $k'\in j'(UV)$.  Then since $U,V$ are bisections, it follows from composability that $\p(g)\p(h)=\p(k)$ and $\p'(g')\p'(h')=\p'(k')$.  Therefore, $gh=ak$ and $g'h'=a'k'$ for some $a,a'\in A$.  But using that $j(U)j(V) = \iota(\gamma(c(U,V)))j(UV)$ and $j'(U)j'(V)=\iota'(\gamma(c'(U,V)))j'(UV)$, we deduce that $a=c(U,V)(\ran(gh))$ and $a'=c'(U,V)(\ran(gh))$.  Thus
\begin{align*}
[g,g'][h,h'][k\inv, (k')\inv] &= [c(U,V)(\ran(gh))\ran(gh),c'(U,V)(\ran(gh)\ran(gh)]\\ &= [\ran(gh),(c(U,V)c'(U,V))(\ran(gh))\ran(gh)]\\ &= \kappa((c(U,V)c'(U,V))(\ran(gh)),\ran(gh)).
\end{align*}
 It follows that $\til c(U,V) = c(U,V)c'(U,V)$, as required.
\end{proof}

We remark that the easiest way to build an $A$-twist over $\mathscr G$ is to begin with a normalized locally constant $2$-cocycle $c\colon \mathscr G\skel 2\to A$ (where $\mathscr G\skel 2$ is the space of composable pairs of elements of $\mathscr G$).  Being a $2$-cocycle means that $c(h,k)c(g,hk)=c(g,h)c(gh,k)$ and being normalized means $c(x,x)=1$ for all units $x\in \mathscr G\skel 0$.  From a normalized $2$-cocycle, you can build a twist $\Sigma = A\times \mathscr G$ with the product topology, $\dom(a,g)=(1,\dom(g))$, $\ran(a,g)=(1,\ran(g))$ and $(a,g)(b,h) = (abc(g,h),gh)$.  The inverse is given by $(a,g)\inv = (a\inv c(g,g\inv)\inv, g\inv)$.  The corresponding exact sequence is
\[A\times \mathscr G\skel 0\to \Sigma\to \mathscr G\] where the first map is the inclusion and the second is the projection.  One easily adapts~\cite[Proposition~4.8]{twists} to general $A$ to show that an  $A$-twist as in \eqref{eq:Atwist} is equivalent to one coming from a locally constant $2$-cocycle on $\mathscr G$ if and only if there is a continuous section $s\colon \mathscr G\to \Sigma$ with $s(\mathscr G\skel 0)\subseteq \Sigma\skel 0$.  By Proposition~\ref{p:section} this occurs if and only if $\Gamma_c(\p)\colon \Gamma_c(\Sigma)\to \Gamma_c(\mathscr G)$ admits an order-preserving and idempotent-preserving section.  Proposition~\ref{p:section.sigma.compact} recovers the folklore result that any twist over a second countable Hausdorff ample groupoid comes from a $2$-cocycle~\cite{twists}, and extends it to the paracompact case. 

\section{Inverse semigroup crossed products}
It was observed in~\cite{GonRoy2017a} that Steinberg algebras of Hausdorff ample groupoids are skew inverse semigroup rings (see~\cite{Demeneghi} for the non-Hausdorff case).  We will now define a notion of inverses semigroup crossed product that captures twisted Steinberg algebras.  Even without a twist, our definition will look different than the definition of a skew inverse semigroup ring found in the literature~\cite{GonRoy2017a,simpleskew}, but it coincides with that definition for the case of so-called spectral actions~\cite{Skewasconv}, which is what arises in the ample groupoid setting.

\subsection{Actions}

Let $R$ be a ring (associative,  but not necessarily unital or commutative).  We say that a ring endomorphism $\psi\colon R\to R$ is \emph{proper} if $\psi(R)=Re$ with $e$ a central idempotent of $R$ (necessarily unique).  Note that if $e,f\in E(Z(R))$ are central idempotents, then $Re\cap Rf=Ref=ReRf$.

\begin{Prop}\label{p:proper.endos}
The proper ring endomorphisms of $R$ form a semigroup $\End_c(R)$ under composition.  The idempotent proper endomorphisms are those of the form  $\p_e(r)=re$ with $e\in E(Z(R))$ and hence $E(\End_c(R))$ is a commutative subsemigroup isomorphic to the Boolean algebra $E(Z(R))$ of central idempotents of $R$ (under meet).  Moreover, $\p(Z(R))\subseteq Z(R)$ for any proper endomorphism $\p$.
\end{Prop}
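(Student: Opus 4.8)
The plan is to prove the four assertions in a logically convenient order, establishing the centrality claim before the semigroup closure since the former is used in the latter. Throughout, if $\psi$ is a proper endomorphism with $\psi(R)=Re$, I would keep in mind that $e$ being a central idempotent makes $Re$ a ring with two-sided identity $e$, so that $ex=xe=x$ for every $x\in Re$; moreover $e$ is uniquely determined by $\psi$, since $Re=Rf$ forces $e=ef=f$ for central idempotents $e,f$.

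First I would verify directly that each $\p_e$ is a proper idempotent endomorphism: additivity is clear, multiplicativity $\p_e(rs)=rse=(re)(se)=\p_e(r)\p_e(s)$ uses that $e$ is central and idempotent, the image is $Re$, and $\p_e\circ\p_e=\p_e$ because $re^2=re$. The same computation gives $\p_e\circ\p_f=\p_{ef}=\p_f\circ\p_e$, which records commutativity of these maps.

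The key step is the centrality claim. For proper $\psi$ with $\psi(R)=Re$ and $z\in Z(R)$, I would first observe that $\psi(z)$ commutes with every element of $\psi(R)=Re$, because $\psi$ is a homomorphism and $z$ is central. To promote this to commuting with all of $R$, I would exploit the centrality of $e$: for arbitrary $r\in R$ one has $er=re\in Re$, and since $\psi(z)=e\psi(z)=\psi(z)e\in Re$ is central in $Re$, both $\psi(z)r=\psi(z)(er)$ and $r\psi(z)=(er)\psi(z)$ reduce to $er\psi(z)$, whence $\psi(z)r=r\psi(z)$. I expect this transfer of commutativity from the corner $Re$ to all of $R$ to be the main obstacle, as it is precisely where the centrality hypothesis on $e$ is indispensable.

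Granting centrality, closure under composition is short: for proper $\psi,\theta$ with images $Re,Rf$ I would compute $(\psi\circ\theta)(R)=\psi(Rf)=(Re)\psi(f)=R\psi(f)$, the last equality because $\psi(f)\in Re$ gives $e\psi(f)=\psi(f)$; as $\psi(f)$ is idempotent ($\psi(f)^2=\psi(f^2)=\psi(f)$) and central by the previous step, $\psi\circ\theta$ is proper, and associativity being automatic, $\End_c(R)$ is a semigroup. To pin down its idempotents, given idempotent $\psi\in\End_c(R)$ with $\psi(R)=Re$ I would note that $\psi$ fixes its image pointwise (since $\psi(\psi(y))=\psi(y)$) and then, for any $r$, compare $\psi(re)=re$ (as $re\in Re$) with $\psi(re)=\psi(r)\psi(e)=\psi(r)e=\psi(r)$ (using $\psi(e)=e$ and $\psi(r)\in Re$), concluding $\psi(r)=re$, i.e.\ $\psi=\p_e$. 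Since $\p_e=\p_f$ forces $Re=Rf$ and hence $e=f$, the assignment $e\mapsto\p_e$ is a bijection $E(Z(R))\to E(\End_c(R))$; combined with $\p_e\circ\p_f=\p_{ef}$ and $\p_{ef}(R)=Ref=Re\cap Rf$, this exhibits it as an isomorphism onto a commutative subsemigroup, matching the meet structure on $E(Z(R))$.
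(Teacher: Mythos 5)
Your proof is correct and takes essentially the same route as the paper: centrality of $\psi(z)$ is proved first (your two-step ``commutes with the image, then transfer to all of $R$ via the central idempotent'' argument is exactly the paper's inline computation $\p(z)r=(\p(z)e_{\p})r=\p(z)(re_{\p})=\cdots=re_{\p}\p(z)=r\p(z)$, repackaged), then closure under composition via $\psi(\theta(R))=R\psi(f)$ with $\psi(f)$ a central idempotent, then identification of the idempotents by noting that an idempotent endomorphism fixes its image pointwise. The only difference is that you explicitly verify the bijectivity and multiplicativity of $e\mapsto\p_e$, which the paper leaves implicit after the word ``hence.''
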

\begin{proof}
For $\p\in \End_c(R)$, we put $\p(R)=Re_{\p}$ with $e_{\p}\in E(Z(R))$.  First we claim that if $\p\in \End_c(R)$ and $z\in Z(R)$, then $\p(z)\in Z(R)$.  Indeed, if $r\in R$, then $re_{\p}\in Re_{\p}=\p(R)$ and so $re_{\p} = \p(r')$ with $r'\in R$.  Thus $\p(z)r = (\p(z)e_{\p})r = \p(z)(re_{\p})=\p(z)\p(r') = \p(zr')=\p(r'z) = \p(r')\p(z) = re_{\p}\p(z)=r\p(z)$.  So if $\p,\psi\in \End_c(R)$, then $\p(\psi(R)) = \p(Re_{\psi}) = \p(R)\p(e_{\psi}) = Re_{\p}\p(e_{\psi})$ and $e_{\p}\p(e_{\psi})$ is a central idempotent as $\p(Z(R))\subseteq Z(R)$.

Clearly, $\p_e(r)=re$ with $e$ a central idempotent is a proper endomorphism with image $Re$.  Conversely, if $\p\in \End_c(R)$ is an idempotent, then $\p$ fixes $\p(R) = Re_{\p}$.  So if $r\in R$, then $\p(r)=\p(r)e_{\p}=\p(r)\p(e_{\p}) = \p(re_{\p}) =re_{\p}$, as required.  This completes the proof.
\end{proof}

Since $\End_c(R)$ has commuting idempotents, the von Neumann regular elements of $\End_c(R)$ form an inverse semigroup. Thus it is natural to consider actions of inverse semigroups on rings by proper endomorphisms.

\begin{Def}
We define an action of an inverse semigroup $S$  on a ring $R$ to be a homomorphism $\alpha\colon S\to \End_c(R)$, written $s\mapsto \alpha_s$.  Often we write $sr$ for $\alpha_s(r)$.  If $e\in E(S)$, let $1_e$ be the central idempotent with $\alpha_e(R) = R1_e$.    We say that action is \emph{non-degenerate} if $\alpha$ is idempotent separating and $R=\sum_{e\in E(S)}R1_e$.
\end{Def}

A set $E$ of idempotents of a ring $R$ is called a \emph{set of local units} if, for all finite subsets $F$ of $R$, there is an idempotent $e\in E$ with $F\subseteq eRe$.
If $S$ admits a non-degenerate action on $R$, then it is easy to check that the Boolean algebra generated by the $1_e$ with $e\in E(S)$ is a set of local units for $R$.  If $S$ is a Boolean inverse semigroup and if $\alpha$ is additive, then the set of $1_e$ with $e\in E(S)$ already is a set of local units.

\begin{Prop}\label{p:where.it.lies}
Let $S$ have a non-degenerate action $\alpha$ on $R$.
\begin{enumerate}
\item $\alpha_s(R) = R1_{ss^*}$.
\item  $\alpha_s(R1_e) = R1_{ses^*}$.
\end{enumerate}
\end{Prop}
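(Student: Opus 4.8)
The plan is to reduce both statements to the homomorphism property of $\alpha$ together with the structure of idempotents in $\End_c(R)$ recorded in Proposition~\ref{p:proper.endos}. The key observation is that for an idempotent $f\in E(S)$ the endomorphism $\alpha_f$ is idempotent in $\End_c(R)$ (since $\alpha_f\alpha_f=\alpha_{ff}=\alpha_f$), so by Proposition~\ref{p:proper.endos} it is the multiplication map $\p_{1_f}$ with $\p_{1_f}(r)=r1_f$, and its image is exactly $R1_f$; for $f=ss^*$ this is just the definition of $1_{ss^*}$. I would first prove (1) by showing $\alpha_s(R)=\alpha_{ss^*}(R)$, after which (2) follows almost formally from (1).

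For part (1), I would use that $\alpha$ is a homomorphism, so $\alpha_{ss^*}=\alpha_s\circ\alpha_{s^*}$, and that $\alpha_{ss^*}$ is the idempotent endomorphism with image $R1_{ss^*}$. To identify $\alpha_s(R)$ with this image I would run the standard sandwiching argument coming from $s=ss^*s$: on one hand $\alpha_{ss^*}(R)=\alpha_s(\alpha_{s^*}(R))\subseteq \alpha_s(R)$ since $\alpha_{s^*}(R)\subseteq R$; on the other hand $\alpha_s(R)=\alpha_{ss^*s}(R)=\alpha_{ss^*}(\alpha_s(R))\subseteq\alpha_{ss^*}(R)$. Combining the two inclusions gives $\alpha_s(R)=\alpha_{ss^*}(R)=R1_{ss^*}$, which is (1).

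Part (2) is then immediate. By definition $R1_e=\alpha_e(R)$, so $\alpha_s(R1_e)=\alpha_s(\alpha_e(R))=\alpha_{se}(R)$, using once more that $\alpha$ is a homomorphism. Applying part (1) to the element $se$ gives $\alpha_{se}(R)=R1_{(se)(se)^*}$, and since $e$ is idempotent we have $(se)(se)^*=see^*s^*=ses^*$, whence $\alpha_s(R1_e)=R1_{ses^*}$ as desired.

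The argument is short and presents no real obstacle; the only point deserving care is the bookkeeping around the composition convention, namely that writing $sr$ for $\alpha_s(r)$ forces $\alpha_{st}=\alpha_s\circ\alpha_t$, so that the homomorphism identities $\alpha_{ss^*}=\alpha_s\circ\alpha_{s^*}$ and $\alpha_{se}=\alpha_s\circ\alpha_e$ are correctly oriented. I would also note that $\alpha_s$ is a regular element of $\End_c(R)$, which is automatic from $\alpha_{ss^*s}=\alpha_s$; this is what legitimizes treating $\alpha_{ss^*}$ as a genuine idempotent and invoking Proposition~\ref{p:proper.endos}. No use of non-degeneracy of the action is needed for either claim.
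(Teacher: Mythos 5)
Your proof is correct and takes essentially the same route as the paper: both arguments rest on $s=ss^*s$, the homomorphism identity $\alpha_{st}=\alpha_s\circ\alpha_t$, and the identification (via Proposition~\ref{p:proper.endos}) of the idempotent $\alpha_{ss^*}$ with multiplication by $1_{ss^*}$, with part (2) deduced by applying part (1) to $se$ exactly as in the paper. Your closing observation that non-degeneracy is never needed is also accurate, since the paper's own argument likewise makes no use of it.
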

\begin{proof}
We have that $\alpha_s(R) = \alpha_{ss^*}(\alpha_s(R))\subseteq R1_{ss*}$.  On the other hand, since $\alpha_{ss^*}(r) = r1_{ss^*}$  by Proposition~\ref{p:proper.endos}, we have \[r1_{ss^*} = \alpha_{ss^*}(r) = \alpha_s(\alpha_{s^*}(r))\in \alpha_s(R).\]  This establishes the first item.  The second follows from the first because $\alpha_s(R1_e) = \alpha_s\alpha_e(R)=\alpha_{se}(R)=R1_{ses^*}$.
\end{proof}

We typically are interested in the case that $S$ has a zero and $\alpha$ preserves $0$ (so $\alpha_0(r)=0$ for all $r\in R$).  We call such an action \emph{zero-preserving}.

One should note that $\alpha_s$ restricts to an isomorphism $R1_{s^*s}\to R1_{ss^*}$, as is easily checked.

To define a notion of crossed product, we need to next consider twists.  So let $S$ have a non-degenerate action on a ring $R$.  Put $\til R = \bigcup_{e\in E(S)}(Z(R)1_e)^{\times}$.

\begin{Prop}\label{p:unit.sheaf}
The set $\til R$ is a commutative inverse semigroup under multiplication with $E(\til R) =\{1_e\mid e\in E(S)\}$.  Moreover, if $s\in S$, then $\alpha_s(\til R)\subseteq \til R$ and hence $S$ acts on $\til R$ by endomorphisms.
\end{Prop}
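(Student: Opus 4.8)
The plan is to dispatch the algebraic structure of $\til R$ first, and then to treat compatibility with the action, which is where the real work lies.

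First I would record that $e\mapsto 1_e$ is multiplicative: since $\alpha$ is a homomorphism and $\alpha_e=\p_{1_e}$ in the notation of Proposition~\ref{p:proper.endos}, one has $\alpha_{ef}=\alpha_e\alpha_f=\p_{1_e}\p_{1_f}=\p_{1_e1_f}$, so $1_{ef}=1_e1_f$. Consequently, if $z\in (Z(R)1_e)^\times$ and $w\in (Z(R)1_f)^\times$ with ring inverses $z',w'$ (so $zz'=1_e$, $ww'=1_f$), then $zw\in Z(R)1_{ef}$, $z'w'\in Z(R)1_{ef}$, and $(zw)(z'w')=1_{ef}$; hence $zw\in (Z(R)1_{ef})^\times\subseteq\til R$. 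This gives closure under multiplication, and commutativity is immediate because every element lies in $Z(R)$. For the inverse semigroup structure I would take $z^*$ to be the ring inverse $z'$ of $z$ inside $Z(R)1_e$: then $zz^*z=1_ez=z$ and $z^*zz^*=z^*$, so $\til R$ is regular; a commutative regular semigroup has commuting idempotents and is therefore inverse. Finally, an idempotent in $(Z(R)1_e)^\times$ must equal the ring identity $1_e$ (multiply $z^2=z$ by $z^{-1}$), and each $1_e$ is a unit of $Z(R)1_e$; this identifies $E(\til R)=\{1_e\mid e\in E(S)\}$.

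The substantive step is showing $\alpha_s(\til R)\subseteq\til R$. The key computation I would carry out is that $\alpha_s(1_e)=1_{ses^*}$ for all $e\in E(S)$ and $s\in S$. To see this, note $\alpha_s(1_e)$ is a central idempotent by Proposition~\ref{p:proper.endos} (proper endomorphisms preserve $Z(R)$ and are multiplicative), and from $\alpha_s(1_e r)=\alpha_{se}(r)$ one gets $\alpha_s(1_e)\,\alpha_s(R)=\alpha_{se}(R)=R1_{ses^*}$; since $\alpha_s(R)=R1_{ss^*}$ by Proposition~\ref{p:where.it.lies} and $\alpha_s(1_e)\leq 1_{ss^*}$, this forces $R\alpha_s(1_e)=R1_{ses^*}$, whence $\alpha_s(1_e)=1_{ses^*}$ because equal principal ideals of central idempotents have equal generators. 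Granting this, for $z\in(Z(R)1_e)^\times$ with ring inverse $z'$ I would apply $\alpha_s$ to $zz'=1_e$ to get $\alpha_s(z)\alpha_s(z')=1_{ses^*}$, observe that $\alpha_s(z)=\alpha_s(z)\alpha_s(1_e)=\alpha_s(z)1_{ses^*}$ and likewise for $\alpha_s(z')$, and conclude that $\alpha_s(z)\in (Z(R)1_{ses^*})^\times\subseteq\til R$ (here $ses^*\in E(S)$). Since each $\alpha_s$ is a ring homomorphism it restricts to a semigroup endomorphism of $\til R$, and $s\mapsto \alpha_s$ being a homomorphism then yields an action of $S$ on $\til R$ by endomorphisms.

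I expect the main obstacle to be the identity $\alpha_s(1_e)=1_{ses^*}$: everything downstream is a short unit-element manipulation, but this equality is what correctly pins down the idempotent $ses^*$ witnessing that $\alpha_s(z)$ lies in $\til R$, and it requires combining the image computation $\alpha_{se}(R)=R1_{ses^*}$ with the fact that a central idempotent is determined by the principal ideal it generates. A secondary point to handle carefully is that $R$ need not be unital, so $1_e$ must be treated as a central idempotent element rather than a global identity; but since each $Z(R)1_e$ is a unital commutative ring with identity $1_e$, the unit group $(Z(R)1_e)^\times$ and all the inverse manipulations make sense locally.
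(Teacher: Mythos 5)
Your proposal is correct and follows essentially the same route as the paper: establish closure and von Neumann regularity of $\til R$ via the ring inverses (so commutativity forces it to be inverse), identify $E(\til R)=\{1_e\}$, and then apply $\alpha_s$ to the unit equation $zz'=1_e$ together with the identity $\alpha_s(1_e)=1_{ses^*}$ (which the paper obtains by citing Propositions~\ref{p:proper.endos} and~\ref{p:where.it.lies}, and which you derive explicitly from $\alpha_s(R1_e)=R1_{ses^*}$). Your extra care in proving $1_{ef}=1_e1_f$ and in justifying that equal principal ideals of central idempotents have equal generators only fills in details the paper leaves implicit.
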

\begin{proof}
Given $r,r'\in (Z(R)1_e)^\times$ and $u,u'\in (Z(R)1_f)^\times$ with $rr'=1_e$ and $uu'=1_f$, we have that $ru,u'r'\in Z(R)1_{ef}$ and $ruu'r' = r1_fr'=rr'1_f=1_e1_f=1_{ef}$.  Thus $ru,u'r'\in \til R$ and $(ru)(u'r')(ru)=ru$. Therefore, $\til R\subseteq Z(R)$ is a commutative von Neumann regular semigroup and hence inverse. Clearly, $E(\til R) =\{1_e\mid e\in E(S)\}$.

If $s\in S$ and $r,r'\in (Z(R)1_e)^\times$ with $rr'=1_e$, then $\alpha_s(r)\alpha_s(r') = \alpha_s(1_e) = 1_{ses^*}$, and so $\alpha_s(r)\in (Z(R)1_{ses^*})^\times$ (using Propositions~\ref{p:proper.endos} and~\ref{p:where.it.lies}).
\end{proof}

We now have that if the action is non-degenerate, then $\til R$ is an $S$-module where we put $p(r)=e$ if $r\in (Z(R)1_e)^\times$ by Proposition~\ref{p:where.it.lies}(2).

\subsection{Crossed products}
We continue to work in the context of an inverse semigroup $S$ with a non-degenerate action $\alpha\colon S\to \End_c(R)$ on $R$.  Fix a  normalized $2$-cocycle $c\colon S\times S\to \til R$.
To define the crossed product $R\rtimes_{\alpha,c} S$, we proceed in two steps.

\begin{Prop}\label{p:define.crossed.prod}
Let $S$ be an inverse semigroup  and let $\alpha$ be a  non-degenerate action of $S$ on a ring $R$. Let $c\colon S\times S\to \til R$ be a normalized $2$-cocycle.
\begin{enumerate}
\item The abelian group $\bigoplus_{s\in S} R\delta_s$ (here $\delta_s$ is an indexing symbol) is a ring with product defined by $r\delta_s\cdot r'\delta_t = r(sr')c(s,t)\delta_{st}$.
\item The additive subgroup $I$ generated by $r\delta_s-rc(t,s^*s)^*\delta_t$ with $s\leq t$ is a two-sided ideal.  If $S$ has a zero and the action is zero-preserving, then $I$ contains $R\delta_0$.
\end{enumerate}
\end{Prop}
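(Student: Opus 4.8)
The plan is to verify directly that the product is well defined and associative in (1), and then in (2) that the generators of $I$ are sent into $I$ upon multiplication by the elements $a\delta_u$ on either side.

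For (1), I would first observe that the formula $r\delta_s\cdot r'\delta_t = r(sr')c(s,t)\delta_{st}$ is additive in each of $r$ and $r'$ separately, since $\alpha_s$ is additive; hence it extends uniquely to a biadditive multiplication on $\bigoplus_{s\in S}R\delta_s$, and distributivity is then automatic. The only real content is associativity, which it suffices to check on basis elements $r\delta_s,r'\delta_t,r''\delta_u$. Expanding $(r\delta_s\cdot r'\delta_t)\cdot r''\delta_u$ gives the coefficient $r(sr')((st)r'')\,c(s,t)c(st,u)$ on $\delta_{stu}$, while $r\delta_s\cdot(r'\delta_t\cdot r''\delta_u)$ gives $r(sr')((st)r'')\,(sc(t,u))c(s,tu)$; here I would use that $\alpha_s$ is a ring endomorphism to distribute $s$ over the product $r'(tr'')c(t,u)$, and that $\alpha$ is a homomorphism so $\alpha_s\alpha_t=\alpha_{st}$. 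Since every cocycle value lies in $\til R\subseteq Z(R)$ and $\alpha_s(Z(R))\subseteq Z(R)$ by Proposition~\ref{p:proper.endos}, these central factors may be collected at the end, and the two coefficients agree precisely because of the $2$-cocycle identity $(sc(t,u))c(s,tu)=c(s,t)c(st,u)$. Thus $\bigoplus_{s\in S}R\delta_s$ is an associative (not necessarily unital) ring.

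For (2), because $I$ is an additive subgroup and multiplication is biadditive, it is enough to show that the product of a basis element $a\delta_u$ with a generator $g=r\delta_s-rc(t,s^*s)^*\delta_t$ (where $s\leq t$) again lies in $I$, on either side. For the left action, expanding $a\delta_u\cdot g$ and using that $\alpha_u$ is an involution-preserving endomorphism of $\til R$ yields $a(ur)c(u,s)\delta_{us}-a(ur)(uc(t,s^*s))^*c(u,t)\delta_{ut}$; since $us\leq ut$ and $(us)^*(us)=s^*u^*us$, property (10) of Proposition~\ref{p:props.normalized.cocycle} identifies this with the generator $r(sa)$-free analogue on indices $us\le ut$, namely the generator attached to $(a(ur)c(u,s),us,ut)\in I$. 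The right action is the delicate case: expanding $g\cdot a\delta_u$ produces two terms with the \emph{different} coefficients $r(sa)$ and $r(ta)$, so before comparing with the single generator attached to $(r(sa)c(s,u),su,tu)$ (note $su\leq tu$) I would first record, from $s=ts^*s$ together with Propositions~\ref{p:proper.endos} and~\ref{p:where.it.lies}, the identity $\alpha_s(a)=\alpha_t(a)1_{ts^*st^*}$. Combining this with the support $p(c(t,s^*s))=ts^*st^*$ and property (11) of Proposition~\ref{p:props.normalized.cocycle} collapses the discrepancy and identifies the product with a generator of $I$. Finally, to see $R\delta_0\subseteq I$ when $S$ has a zero and $\alpha$ is zero-preserving, I would note that $1_0=0$, so $c(0,0)$, lying in $(Z(R)1_0)^\times=\{0\}$, is $0$; hence the generator $r\delta_0-rc(0,0)^*\delta_0$ attached to $(r,0,0)$ is simply $r\delta_0$.

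The main obstacle will be the right-multiplication computation in (2): unlike the left case, the two summands are twisted by the distinct endomorphisms $\alpha_s$ and $\alpha_t$, so the argument hinges on recognizing that $\alpha_s=\alpha_t\circ\alpha_{s^*s}$ forces the missing central factor $1_{ts^*st^*}$, which is exactly the support of the cocycle value $c(t,s^*s)$ occurring in the generator. Reconciling these supports with the technical cocycle identity (11) — and matching indices correctly against the intended generator — is where essentially all of the bookkeeping lives.
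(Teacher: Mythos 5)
Your proposal is correct and takes essentially the same route as the paper's own proof: associativity in (1) via centrality of the cocycle values together with the $2$-cocycle identity, and in (2) matching the left and right products of a generator with $a\delta_u$ against generators indexed by $us\leq ut$ and $su\leq tu$ using Proposition~\ref{p:props.normalized.cocycle}(10) and (11), including the same key reduction $c(t,s^*s)^*(ta)=c(t,s^*s)^*(sa)$ coming from $p(c(t,s^*s))=ss^*$. Your treatment of the zero case ($1_0=0$ forces $c(0,0)=0$, so the generator at $0\leq 0$ is just $r\delta_0$) is also identical to the paper's.
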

\begin{proof}
For the first item, it suffices to check associativity.  We have
\begin{align*}
((r_1\delta_s)(r_2\delta_t))r_3\delta_u &= r_1(sr_2)c(s,t)(str_3)c(st,u)  \delta_{stu}, \\
r_1\delta_s(r_2\delta_t r_3\delta_u) &= r_1s(r_2(tr_3)c(t,u))c(s,tu)\delta_{stu}\\ &= r_1(sr_2)(str_3)(sc(t,u))c(s,tu)\delta_{stu}.
\end{align*}
  Since $c$ takes values in the center of $R$, associativity follows from $c$ being a $2$-cocycle.

The second item is more technical.    Consider $s\leq t\in S$ and $r\in R$.  If $u\in S$ and $a\in R$, then
\[a\delta_u(r\delta_s-rc(t,s^*s)^*\delta_t)= a(ur)c(u,s)\delta_{su} - a(ur)(uc(t,s^*s)^*)c(u,t)\delta_{ut}.\]  We have $su\leq ut$ and so it suffices to show that $c(u,s)c(ut, s^*u^*us)^* = (uc(t,s^*s)^*)c(u,t)$, which is the content of Proposition~\ref{p:props.normalized.cocycle}(10).
On the other hand,
\[(r\delta_s-rc(t,s^*s)^*\delta_t)a\delta_u = r(sa)c(s,u)\delta_{su}-rc(t,s^*s)^*(ta)c(t,u)\delta_{tu}.\]  Note that $c(t,s^*s)\in R1_{ss^*}$ and so $c(t,s^*s)^*(ta) = c(t,s^*s)^*1_{ss^*}(ta) = c(t,s^*s)^*(ta)1_{ss^*} = c(t,s^*s)^*(ss^*(ta)) = c(t,s^*s)^*(sa)$.  Thus it suffices to show that
$r(sa)c(s,u)\delta_{su}-r(sa)c(t,s^*s)^*c(t,u)\delta_{tu}\in I$.  Since $su\leq tu$, it suffices to show that $c(s,u)c(tu,u^*s^*su) = c(t,s^*s)^*c(t,u)$, which follows from Proposition~\ref{p:props.normalized.cocycle}(11).  This completes the proof that $I$ is an ideal.

 If $\alpha$ is zero preserving, then since $0\leq 0$, we have that $r\delta_0-rc(0,0)^*\delta_0\in I$. But $c(0,0)=0$ as $1_0=0$ and so $(Z(R)1_0)^\times =0$. Thus  $r\delta_0\in I$.
\end{proof}

We can now define the crossed product.
\begin{Def}
Let $R$ be a ring, $S$ an inverse semigroup and $\alpha\colon S\to \End_c(R)$ a non-degenerate action.  Let $c\colon S\times S\to \til R$ be a normalized $2$-cocycle.  Then the \emph{crossed product} $R\rtimes_{\alpha,c} S$ is the ring $\bigoplus_{s\in S}R\delta_s/I$ where we retain the notation of Proposition~\ref{p:define.crossed.prod}.
\end{Def}

It is relatively straightforward to verify that two cohomologous normalized $2$-cocycles yield isomorphic crossed products.
\begin{Prop}\label{p:same.crossed}
Let $\alpha\colon S\to \End_c(R)$ be a non-degenerate action of $S$ on $R$.  Let $c,c'\colon S\times S\to \til R$ be normalized two-cocycles.  If $c$ and $c'$ are cohomologous, then $R\rtimes_{\alpha,c} S\cong R\rtimes_{\alpha,c'} S$.
\end{Prop}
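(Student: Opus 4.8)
The plan is to realize a coboundary relating $c$ and $c'$ as a twisting automorphism of the ambient graded ring $\bigoplus_{s\in S}R\delta_s$ that intertwines the two products and respects the defining ideals. Since $c$ and $c'$ are cohomologous, write $c'=c\,\delta F$ for some $F\in C^1(S,\til R)$, so that $c'(s,t)=c(s,t)F(s)(sF(t))F(st)^*$ with $p(F(s))=ss^*$. The first thing I would record is that normalization forces $F(e)=1_e$ for every $e\in E(S)$: evaluating the coboundary at $(e,e)$ and using that both $c(e,e)$ and $c'(e,e)$ equal the idempotent $1_e$ (as $c,c'$ are normalized) collapses to $F(e)=1_e$. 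This normalization of $F$ on idempotents is exactly what makes the ideal computation work.

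Next I would define $\Phi$ on each summand by $\Phi(r\delta_s)=rF(s)^*\delta_s$ and introduce the candidate inverse $\Psi(r\delta_s)=rF(s)\delta_s$, and verify that $\Phi$ is a ring homomorphism from $(\bigoplus R\delta_s,\cdot_c)$ to $(\bigoplus R\delta_s,\cdot_{c'})$. Because $F(s)^*$, $c(s,t)$ and $c'(s,t)$ all lie in $Z(R)$ and $\alpha_s$ is a homomorphism preserving the involution of $\til R$, comparing $\Phi(r\delta_s\cdot_c r'\delta_t)$ with $\Phi(r\delta_s)\cdot_{c'}\Phi(r'\delta_t)$ reduces to the single central identity $F(s)^*(sF(t)^*)c'(s,t)=c(s,t)F(st)^*$. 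Substituting the coboundary formula, using $F(s)^*F(s)=1_{ss^*}$ and $(sF(t))^*(sF(t))=1_{stt^*s^*}$, and invoking $p(c(s,t))=stt^*s^*$ (so that $c(s,t)=c(s,t)1_{stt^*s^*}$) makes both sides equal to $c(s,t)F(st)^*$.

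The crucial step, which I expect to be the main obstacle, is showing that $\Phi$ carries the ideal $I$ of Proposition~\ref{p:define.crossed.prod} for $c$ into the corresponding ideal for $c'$. For $s\leq t$ I would use the order reductions $ts^*s=s$ and $st^*=ss^*$ to evaluate $c'(t,s^*s)$: here $F(ts^*s)=F(s)$, while $F(s^*s)=1_{s^*s}$ by the idempotent normalization and $tF(s^*s)=\alpha_t(1_{s^*s})=1_{t(s^*s)t^*}=1_{ss^*}$ by Proposition~\ref{p:where.it.lies}(2). Combined with Proposition~\ref{p:props.normalized.cocycle}(1), which gives $p(c(t,s^*s))=ss^*$, these reductions show that the generator $r\delta_s-rc(t,s^*s)^*\delta_t$ of $I$ is sent to $rF(s)^*\delta_s-rF(s)^*c'(t,s^*s)^*\delta_t$, a generator of the ideal for $c'$. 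Thus $\Phi$ descends to a homomorphism $R\rtimes_{\alpha,c}S\to R\rtimes_{\alpha,c'}S$, and by symmetry (writing $c=c'\,\delta F^{-1}$ with $F^{-1}(s)=F(s)^*$) so does $\Psi$ in the reverse direction. Finally I would compute $\Psi\Phi(r\delta_s)=r1_{ss^*}\delta_s$; the relation coming from $s\leq s$, together with $c(s,s^*s)=1_{ss^*}$ from Proposition~\ref{p:props.normalized.cocycle}(1), yields $r\delta_s\equiv r1_{ss^*}\delta_s$ in the crossed product, so $\Psi\Phi$ and likewise $\Phi\Psi$ are the identity. Hence $\Phi$ is the desired isomorphism.
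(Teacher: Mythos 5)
Your proposal is correct and follows essentially the same route as the paper's proof: twist each summand by the coboundary via $r\delta_s\mapsto rF(s)^{*}\delta_s$, check this intertwines the two products, show it carries the ideal $I$ into $I'$ using $F(e)=1_e$ and the reductions $ts^*s=s$, $ts^*st^*=ss^*$, and invert it by symmetry, finishing with $r\delta_s\equiv r1_{ss^*}\delta_s$ from Proposition~\ref{p:props.normalized.cocycle}(1). The only differences are cosmetic: you write $c'=c\,\delta F$ where the paper writes $c=c'\,\delta F$ (so your $F$ is the paper's pointwise inverse of $F$, and your forward map multiplies by $F(s)^*$ instead of $F(s)$), and the fact $p(c(t,s^*s))=ss^*$ that you attribute to Proposition~\ref{p:props.normalized.cocycle}(1) is really just the cocycle axiom $p(c(s,t))=stt^*s^*$ combined with $ts^*st^*=ss^*$.
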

\begin{proof}
Let $F\colon S\to \til R$ be a mapping with $F(s)\in (Z(R)1_{ss^*})^\times$, for all $s\in S$, and with $c=c'\cdot \delta F$.  So $c(s,t) = c'(s,t)F(s)(sF(t))F(st)^*$.  Since $c,c'$ are normalized, $c(e,e)=1_e=c'(e,e)$ for $e\in E(S)$.  Thus  we have that \[F(e)= c'(e,e)F(e)(eF(e))F(ee)^* = c(e,e)=1_e\] and so $F$ is idempotent preserving.

 We put $A=\bigoplus_{s\in S}R\delta_s$ with product $r\delta_s\cdot r'\delta_t = r(sr')c(s,t)\delta_{st}$ and let $I$ be the additive subgroup generated by all $r\delta_s-rc(t,s^*s)^*\delta_t$ with $s\leq t$.  Similarly, we let $A'=\bigoplus_{s\in S}R\delta_s$ with multiplication $r\delta_s\cdot r'\delta_t = r(sr')c'(s,t)\delta_{st}$ and $I'$ be the additive subgroup  generated by all $r\delta_s-rc'(t,s^*s)^*\delta_t$ with $s\leq t$.  We first construct a homomorphism $\Phi\colon A\to A'$ with $\Phi(I)\subseteq I'$.  Define $\Phi(r\delta_s) = rF(s)\delta_s$.  Note that $\Phi(r\delta_s\cdot r'\delta_t) = r(sr')c(s,t)F(st)\delta_{st}$, whereas $\Phi(r\delta_s)\Phi(r'\delta_t) = rF(s)\delta_s\cdot r'F(t)\delta_t = r(sr')F(s)(sF(t))c'(s,t)\delta_{st}$.  But $F(st)^*F(st) = 1_{stt^*s^*}$ and $c'(s,t)\in R1_{stt^*s^*}$, whence \[c'(s,t) = F(st)^*F(st)c'(s,t).\]  Therefore, we have that
 \begin{align*}
 \Phi(r\delta_s)\Phi(r'\delta_s) &= r(sr')F(s)(sF(t))F(st)^*c'(s,t)F(st)\delta_{st} \\ &= r(sr')c(s,t)F(st)\delta_{st}=\Phi(r\delta_s\cdot r'\delta_t).
 \end{align*}
   We conclude that $\Phi$ is a homomorphism.

Suppose now that $s\leq t$ and note that $\Phi(r\delta_s-rc(t,s^*s)^*\delta_t) = rF(s)\delta_s - rc(t,s^*s)^*F(t)\delta_t$.  But \[c(t,s^*s)^*=c'(t,s^*s)^*F(t)^*(tF(s^*s))^*F(ts^*s) = c'(t,s^*s)^*F(t)^*1_{ss^*}F(s)\] as $F(s^*s) =1_{s^*s}$ and $ts^*s=s$.  Thus, using $1_{tt^*}\geq 1_{ss^*}$, we have \[rc(t,s^*s)^*F(t)=rF(s)c'(t,s^*s)^*F(t)^*F(t)1_{ss^*} = rF(s)c'(t,s^*s)^*.\]   Therefore, $ rF(s)\delta_s - rc(t,s^*s)^*F(t)\delta_t=rF(s)\delta_s-rF(s)c'(t,s^*s)^*\delta_t\in I'$, and so we have a well-defined homomorphism $\p\colon R\rtimes_{\alpha,c}S\to R\rtimes_{\alpha,c'} S$ given by $\p(r\delta_s+I) = \Phi(r\delta_s)+I' = rF(s)\delta_s+I'$.  Similarly, we have a well-defined homomorphism $\psi\colon R\rtimes_{\alpha,c'}S\to R\rtimes_{\alpha,c} S$ given by $\psi(r\delta_s+I') = rF(s)^*\delta_s+I$.  We show that these two homomorphisms are inverse to each other.  By symmetry, it suffices to consider $\psi\circ \p$.  We have that $\psi\p(r\delta_s+I) = \psi(rF(s)\delta_s+I') = rF(s)F(s)^*\delta_s+I = r1_{ss^*}\delta_s+I$.  But since $s\leq s$ and $c(s,s^*s)= 1_{ss^*}$ by Proposition~\ref{p:props.normalized.cocycle}(1), it follows that $r\delta_s-r1_{ss^*}\delta_s = r\delta_s-rc(s,s^*s)^*\delta_s\in I$.  Thus $\psi\p(r\delta_s+I) = r\delta_s+I$.  This completes the proof.
\end{proof}

 When the $2$-cocycle $c$ is trivial, then $R\rtimes_{\alpha,c} S$ is called a \emph{skew inverse semigroup ring}.  Although it looks superficially different, this notion of a skew inverse semigroup ring coincides with the standard one for what are called spectral actions in~\cite{Skewasconv}.  Note that when $c$ is trivial, $I$ is generated by all differences $r\delta_s-r1_{ss^*}\delta_t$ with $s\leq t$.

The following two results yield a partial generalization of a result for skew inverse semigroup rings~\cite[Proposition~3.1]{simpleskew}.

\begin{Prop}\label{p:diagonalsub}
Let $\alpha\colon S\to \End_c(R)$ be a non-degenerate action and $c\colon S\times S\to \til R$ a normalized $2$-cocycle.  Then the additive subgroup of the ring $R\rtimes_{\alpha,c} S$ generated by the elements $r\delta_e+I$ with $r\in R$ and $e\in E(S)$ is a subring isomorphic to a quotient of $R$.   If there is an additive group homomorphism $\tau\colon R\rtimes_{\alpha,c}S\to R$ with $\tau(r\delta_e+I) = r$ for all $r\in R1_e$ and $e\in E(S)$, then this subring is isomorphic to $R$.
\end{Prop}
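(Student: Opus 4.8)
The plan is to realize the subring in question, call it $D$, as a quotient of an explicit \emph{diagonal} subring of $\bigoplus_s R\delta_s$, and then to compare that subring with $R$ through a summation map. First I would normalize the generators: taking $s=t=e$ in the generators of $I$ and using $c(e,e)=1_e$ (normalization) gives $r\delta_e-r1_e\delta_e\in I$, so that $r\delta_e+I=(r1_e)\delta_e+I$ with $r1_e\in R1_e$. Hence, writing $\Delta=\bigoplus_{e\in E(S)}R1_e\delta_e$, one has $D=(\Delta+I)/I$. A direct computation shows $\Delta$ is a subring of $\bigoplus_s R\delta_s$: using $\alpha_e(r')=r'1_e$ from Proposition~\ref{p:proper.endos}, together with $c(e,f)=1_{ef}$ (which holds because $c(e,f)\in E(\til R)$ by Proposition~\ref{p:props.normalized.cocycle}(4) and $p(c(e,f))=ef$), the product of $r\delta_e$ with $r\in R1_e$ and $r'\delta_f$ with $r'\in R1_f$ equals $rr'1_{ef}\delta_{ef}$, whose coefficient lies in $R1_{ef}$. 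Consequently $D\cong\Delta/(\Delta\cap I)$ by the second isomorphism theorem, and $D$ is automatically a subring.

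Next I would introduce the summation map $\epsilon\colon\Delta\to R$, $\epsilon(\sum_e r_e\delta_e)=\sum_e r_e$. The same computation shows $\epsilon$ is a ring homomorphism, since $\epsilon(r\delta_e\cdot r'\delta_f)=rr'1_{ef}=rr'=\epsilon(r\delta_e)\epsilon(r'\delta_f)$, where the middle equality uses $r=r1_e$, $r'=r'1_f$ and centrality of the $1_e$. Non-degeneracy of $\alpha$ gives $R=\sum_{e}R1_e$, so $\epsilon$ is surjective and $R\cong\Delta/\ker\epsilon$. Thus, to present $D$ as a quotient of $R$, it suffices to prove $\ker\epsilon\subseteq\Delta\cap I$.

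The diagonal relations, obtained by taking $s=e\le t=f$ in $E(S)$ (where $c(f,e)=1_e$ again by Proposition~\ref{p:props.normalized.cocycle}(4), as $p(c(f,e))=fe=e$), show $r\delta_e-r\delta_f\in I$ for $r\in R1_e\subseteq R1_f$; let $N$ be the subgroup they generate, so $N\subseteq\Delta\cap I$. The heart of the argument is the inclusion $\ker\epsilon\subseteq N$: if $r_i\in R1_{e_i}$ with $\sum_{i=1}^n r_i=0$, then $\sum_i r_i\delta_{e_i}\in N$. I would prove this by induction on $n$. For the inductive step, write $r_n=-\sum_{i<n}r_i1_{e_n}$ (using $r_n=r_n1_{e_n}$) and substitute, obtaining $\sum_i r_i\delta_{e_i}=\sum_{i<n}\big(r_i\delta_{e_i}-(r_i1_{e_n})\delta_{e_n}\big)$. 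Routing each term through the meet $e_ie_n$ (which satisfies $e_ie_n\le e_i$ and $e_ie_n\le e_n$) and applying the $N$-relations twice yields $(r_i1_{e_n})\delta_{e_n}\equiv(r_i1_{e_n})\delta_{e_i}\pmod N$, so the whole sum is congruent to $\sum_{i<n}(r_i-r_i1_{e_n})\delta_{e_i}$; these new coefficients lie in $R1_{e_i}$ and sum to $0$, so the inductive hypothesis applies. This gives $\ker\epsilon=N\subseteq\Delta\cap I$ and proves the first assertion. I expect this inductive reduction — in effect, showing that the kernel of the summation map is generated by the comparable-idempotent relations even though $E(S)$ need not admit joins — to be the only genuinely non-routine point.

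Finally, for the second statement, I would check that $\tau$ splits the induced surjection $\theta\colon R\cong\Delta/N\twoheadrightarrow D$. For $r=\sum_i r_i$ with $r_i\in R1_{e_i}$ one has $\theta(r)=\sum_i r_i\delta_{e_i}+I$, and additivity of $\tau$ together with $\tau(r_i\delta_{e_i}+I)=r_i$ gives $\tau(\theta(r))=\sum_i r_i=r$. Hence $\tau\circ\theta=1_R$, so $\theta$ is injective; being a surjective ring homomorphism, it is then an isomorphism $R\cong D$.
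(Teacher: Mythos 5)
Your proposal is correct and is essentially the paper's own proof in different packaging: the heart of both arguments is the identical induction showing that $\sum_{i}r_i=0$ with $r_i\in R1_{e_i}$ forces $\sum_i r_i\delta_{e_i}\in I$, by routing each term through the meet $e_ie_n$ using $c(e,f)=1_{ef}$ and the comparable-idempotent generators of $I$, and the second statement is handled in both by observing $\tau$ splits the resulting surjection $R\twoheadrightarrow D$. The only differences are organizational — you factor through the subring $\Delta=\bigoplus_e R1_e\delta_e$, the summation homomorphism $\epsilon$, and the second isomorphism theorem, where the paper directly defines $\rho(r)=\sum_i r_i\delta_{e_i}+I$ and checks well-definedness and multiplicativity — so no new idea is gained or lost.
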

\begin{proof}
Let $A$ be the additive subgroup generated by the $r\delta_e+I$ with $r\in R$ and $e\in E(S)$.  Since $r\delta_e\cdot r'\delta_f+I = r(er')c(e,f)\delta_{ef}+I$, it is clear that $R$ is a subring.  Suppose that $r\in R$.  Since the action is non-degenerate, we may write $r=\sum_{i=1}^n r_i$ with $r_i\in R1_{e_i}$ and $e_i\in E(S)$, for $i=1,\ldots, n$.  Define $\rho(r) = \sum_{i=1}^n r_i\delta_{e_i}+I$.  We claim that $\rho$ is a well-defined  homomorphism from $R$ to $A$.  To show that $\rho$ is well defined, it suffices to show that if $0=\sum_{i=1}^n r_i$ with $r_i\in R1_{e_i}$, then $\sum_{i=1}^nr_i\delta_{e_i}\in I$.  We proceed by induction on $n$, with the case $n=1$ being trivial, as then $r_1=0$ and so $r_1\delta_{e_1}=0\in I$.  Suppose now that the result is true for $n-1$ and  $0=\sum_{i=1}^n r_i$ with $r_i\in R1_{e_i}$ with $n\geq 2$.  Then $r_n = -\sum_{i=1}^{n-1}r_i$ and so $r_n=r_n1_{e_n} = -\sum_{i=1}^{n-1}r_i1_{e_n}$.  Note that $r_i-r_i1_{e_n}\in R1_{e_i}$ and $0=\sum_{i=1}^{n-1} (r_i-r_i1_{e_n})$.  By induction, we deduce that $\sum_{i=1}^{n-1} (r_i-r_i1_{e_n})\delta_{e_i}\in I$, i.e.,
\begin{equation}\label{eq:embed.eq}
\sum_{i=1}^{n-1}r_i\delta_{e_i}+I = \sum_{i=1}^{n-1}r_i1_{e_n}\delta_{e_i}+I.
\end{equation}
 Since $e_ie_n\leq e_i$, we have that $r_i\delta_{e_ie_n}-r_i1_{e_n}\delta_{e_i}\in I$, as $r_ic(e_i, e_ie_n)^* = r_i1_{e_i}1_{e_n}=r_i1_{e_n}$ by Proposition~\ref{p:props.normalized.cocycle}(4).  On the other hand, since $e_ie_n\leq e_n$, we have that $r_i\delta_{e_ie_n} - r_i1_{e_n}\delta_{e_n}\in I$ as $r_ic(e_n,e_ie_n)^* = r_i1_{e_i}1_{e_n} = r_i1_{e_n}$.   Therefore, $r_i1_{e_n}\delta_{e_i}+I=r_i1_{e_n}\delta_{e_n}+I$.   Thus, we deduce from \eqref{eq:embed.eq} that
 \[\sum_{i=1}^{n-1}r_i\delta_{e_i}+I = \sum_{i=1}^{n-1}r_i1_{e_n}\delta_{e_n}+I  = -r_n\delta_{e_n}+I. \]
 It follows that $\sum_{i=1}^nr_i\delta_{e_i}\in I$, as required.

To verify that $\rho$ is a homomorphism, let $r=\sum_{i=1}^n r_i$ and $r'=\sum_{j=1}^m r'_j$ with $r_i\in R1_{e_i}$ and $r_j\in R1_{f_j}$ with $e_i, f_j\in E(S)$ for $1\leq i\leq n$, $1\leq j\leq m$.  Then $rr' = \sum_{i=1}^n\sum_{j=1}^m r_ir'_j$ and $r_ir'_j\in R1_{e_if_j}$. Therefore, $\rho(rr') = \sum_{i=1}^n\sum_{j=1}^m r_ir'_j\delta_{e_if_j}+I$.  On the other hand, \[\rho(r)\rho(r') =\sum_{i=1}^nr_i\delta_{e_i}\sum_{j=1}^mr_j\delta_{f_j}+I=\sum_{i=1}^n\sum_{j=1}^m r_i(e_ir_j)c(e_i,f_j)\delta_{e_if_j}+I.\]  But $e_ir_j = r_j1_{e_i}$ and $c(e_i,f_j) =1_{e_if_j}$ by Proposition~\ref{p:props.normalized.cocycle}(4) and so \[r_i(e_ir_j)c(e_i,f_j)=r_ir_j1_{e_i}1_{e_if_j} = r_ir_j.\]  Therefore, $\rho$ is a homomorphism.


To check that $\rho$ is surjective, notice that since $e\leq e$, we have that $r\delta_e+I = rc(e,e)^*\delta_e+I = r1_e\delta_e+I = \rho(r1_e)$.   
Suppose that $\tau$, as in the final statement, exists.  If $r\in R$ with $r=\sum_{i=1}^n r_i$ with $r_i\in R1_{e_i}$, for $i=1,\ldots, n$, then $\tau(\rho(r)) = \tau\left(\sum_{i=1}^n r_i\delta_{e_i}+I\right)=\sum_{i=1}^n r_i = r$ and so $\rho$ is injective.
\end{proof}

We suspect that $\rho$ is injective in general, but we could only prove the existence of the additive homomorphism $\tau$ in certain cases.

\begin{Prop}\label{p:good.cases}
Let $\alpha\colon S\to \End_c(R)$ be a non-degenerate action and $c\colon S\times S\to \til R$ a normalized $2$-cocycle.  Then an additive group homomorphism $\tau\colon R\rtimes_{\alpha,c}S\to R$ with $\tau(r\delta_e+I) = r$ for all $r\in R1_e$ exists in the following cases.
\begin{enumerate}
\item If $c(t,s^*s)=1_{ss^*}$ whenever $s\leq t$ (i.e., if $c$ is trivial, that is, $R\rtimes_{\alpha,c} S$ is a skew inverse semigroup ring).
\item If, for each $s\in S$, either the set $E(s)=\{e\in E(S)\mid e\leq s\}$ is empty or has a unique maximum element $e(s)$.
\end{enumerate}
\end{Prop}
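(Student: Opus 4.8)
The plan is to reduce both cases to producing a single function $\beta\colon S\to Z(R)$ satisfying (i) $\beta(e)=1_e$ for all $e\in E(S)$, and (ii) $\beta(s)=c(t,s^*s)^*\beta(t)$ whenever $s\leq t$. Indeed, given such a $\beta$, I would set $\hat\tau(r\delta_s)=r\beta(s)$; since $\beta(s)\in Z(R)$ this is a well-defined additive map $\bigoplus_{s\in S}R\delta_s\to R$, and property (ii) gives $\hat\tau(r\delta_s-rc(t,s^*s)^*\delta_t)=r(\beta(s)-c(t,s^*s)^*\beta(t))=0$ for $s\leq t$, so $\hat\tau$ annihilates the generators of $I$ and descends to an additive $\tau\colon R\rtimes_{\alpha,c}S\to R$. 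Property (i) then yields $\tau(r\delta_e+I)=r1_e=r$ for $r\in R1_e$, as required. Thus everything reduces to exhibiting $\beta$ in each case.

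For case (1) I would simply take $\beta(s)=1_{ss^*}$. Property (i) is immediate, and for $s\leq t$ the hypothesis gives $c(t,s^*s)^*=1_{ss^*}$, while $ss^*\leq tt^*$ forces $1_{ss^*}1_{tt^*}=1_{ss^*}$; hence $c(t,s^*s)^*\beta(t)=1_{ss^*}1_{tt^*}=1_{ss^*}=\beta(s)$, which is (ii). Note that the resulting $\hat\tau(r\delta_s)=r1_{ss^*}$ is the restriction of $r$ to the meet of the domain and range supports of $s$, \emph{not} the naive restriction to the unit space; this is precisely why no maximum-idempotent (Hausdorff-type) hypothesis is needed here.

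For case (2) I would put $\beta(s)=c(s,e(s))$ when $E(s)$ has a maximum $e(s)$ and $\beta(s)=0$ when $E(s)=\varnothing$; verifying (ii) is where the work lies. First I would record that for $f\in E(S)$ with $f\leq s$ one has $f=sf=fs$ and $sfs^*=f$ (a short argument from the natural partial order), so that $p(c(s,e(s)))=se(s)s^*=e(s)$, whence $c(s,e(s))\in(Z(R)1_{e(s)})^\times$ and, by normalization, $\beta(e)=c(e,e)=1_e$, giving (i). Next, if $s\leq t$ and $e\in E(S)$ with $e\leq t$, then $h=s^*s\,e\leq e\leq t$ gives $th=h$, and $sh=t(s^*s)h=th=h$ (using $(s^*s)h=h$ and $s=t(s^*s)$), so $h\leq s$, i.e.\ $h\in E(s)$; combined with the trivial inclusion $E(s)\subseteq E(t)$ this shows that when $E(t)$ has maximum $e(t)$ the element $s^*s\,e(t)$ is the maximum of $E(s)$, so $e(s)=s^*s\,e(t)$, while $E(t)=\varnothing$ forces $E(s)=\varnothing$. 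The verification of (ii) then splits: if $E(t)=\varnothing$ both sides vanish, and otherwise I would invoke Proposition~\ref{p:props.normalized.cocycle}(11) with $u=e(t)$, using $c(s,e(t))=c(s,e(s))$ from part~(5), $te(t)=e(t)$, $e(t)s^*se(t)=e(s)$, and $c(e(t),e(s))=1_{e(s)}$ from part~(4); these collapse the identity of~(11) to exactly $c(s,e(s))=c(t,s^*s)^*c(t,e(t))$, which is (ii). The main obstacle I anticipate is precisely this matching step: aligning the arguments of the cocycle identity~(11) with the idempotents $e(s)$ and $e(t)$ and checking the attendant support computations, for which the order-theoretic facts above together with the normalized-cocycle identities of Proposition~\ref{p:props.normalized.cocycle} are the essential tools.
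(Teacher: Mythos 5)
Your proposal is correct and follows essentially the same route as the paper: the paper likewise defines $\tau'(r\delta_s)=r1_{ss^*}$ in case (1) and $\tau'(r\delta_s)=rc(s,e(s))$ (or $0$ when $E(s)=\emptyset$) in case (2), establishes the same order-theoretic facts ($E(s)=\emptyset$ iff $E(t)=\emptyset$, and $e(s)=s^*se(t)=ss^*e(t)$ for $s\leq t$), and checks that the generators of $I$ are annihilated, so your $\beta$-formulation is just a clean repackaging of that argument. The only real difference is in how the closing identity $c(s,e(s))=c(t,s^*s)^*c(t,e(t))$ is obtained: the paper re-derives it by two fresh applications of the raw $2$-cocycle condition together with Proposition~\ref{p:props.normalized.cocycle}(1) and (4), whereas you get it in one step by specializing Proposition~\ref{p:props.normalized.cocycle}(11) at $u=e(t)$ (with parts (4) and (5)), which is legitimate and slightly more economical.
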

\begin{proof}
Suppose the first item holds.  Define a homomorphism of additive groups $\tau'\colon \bigoplus_{s\in S}R\delta_s\to R$ by $r\delta_s\mapsto r1_{ss^*}$.  Then a generator for $I$ is of the form $r\delta_s-r1_{ss^*}\delta_t$ with $s\leq t$, as $c(t,s^*s)^*=1_{ss^*}$,  and $\tau'(r\delta_s-r1_{ss^*}\delta_t) = r1_{ss^*}-r1_{ss^*}1_{tt^*} =0$ since $s\leq t$.  Therefore, $\tau'$ induces an additive group homomorphism $\tau\colon R\rtimes_{\alpha,c}S\to R$. Moreover, $\tau(r\delta_e+I) = r1_e=r$ for $r\in R1_e$ with  $e\in E(S)$.

Now, suppose that each set $E(s)$ is either empty or has a maximum $e(s)$.  Define a homomorphism of additive groups $\tau'\colon \bigoplus_{s\in S}R\delta_s\to R$ by \[\tau'(r\delta_s) = \begin{cases} rc(s,e(s)), & \text{if}\ E(s)\neq \emptyset \\ 0, & \text{else.}\end{cases} \]  We need to check that $I\subseteq \ker \tau'$.  Suppose that $s\leq t$. Obviously $E(s)\subseteq E(t)$.  On the other hand, if $e\in E(t)$, then $es^*s\leq ts^*s=s$ and so $es^*s\in E(s)$.  Thus $E(s)=\emptyset$ if and only if $E(t)=\emptyset$.  In particular, if $s\leq t$ and $E(s)=\emptyset=E(t)$, then $\tau'(s\delta_s-sc(t,s^*s)^*\delta_t) =0$.  So let us assume that $E(s)$ and $E(t)$ are non-empty.  Then $e(s)\leq s\leq t$ implies $e(s)\leq e(t)$ and $e(s)\leq s^*s$.  Thus $e(s)\leq e(t)s^*s$.  But $e(t)s^*s\leq ts^*s=s$, and so $e(t)s^*s\leq e(s)$. Therefore, $e(t)s^*s=e(s)$.  A similar argument shows that $ss^*e(t)=e(s)$.

We now compute $(tc(s^*s,e(s)))c(t,s^*se(s)) = c(t,s^*s)c(ts^*s,e(s))$, which yields $c(t,e(s))=c(t,s^*s)c(s,e(s))$  since $c$ is normalized (using Proposition~\ref{p:props.normalized.cocycle}(4)).  But \[(tc(e(t),s^*s))c(t,e(t)s^*s)=c(t,e(t))c(te(t),s^*s) = c(t,e(t))c(e(t),s^*s).\]  As $e(t)s^*s=e(s)$, we deduce, using Proposition~\ref{p:props.normalized.cocycle}(4), that $c(t,e(s))=c(t,e(t))1_{e(s)}=e(s)c(t,e(t))$.  Therefore, $e(s)c(t,e(t))=c(t,s^*s)c(s,e(s))$. Also note that $p(c(t,e(t))1_{ss^*})= ss^*e(t)=e(s)$ and so $c(t,e(t))1_{ss^*}=e(s)(c(t,e(t))1_{ss^*}) =e(s)(ss^*c(t,e(t))) = e(s)c(t,e(t))= c(t,s^*s)c(s,e(s))$.
Thus we have
\begin{align*}
\tau'(r\delta_s-rc(t,s^*s)^*\delta_t)& = rc(e,s)-rc(t,s^*s)^*c(t,e(t))\\ &= rc(e,s)-rc(t,s^*s)^*1_{ss^*}c(t,e(t))  \\&= rc(e,s)-rc(t,s^*s)^*c(t,s^*s)c(s,e(s))\\ &=0
\end{align*}
 as $ss^*\geq e(s)$.  Therefore, $\tau'$ induces a well defined map $\tau\colon R\rtimes_{\alpha,c}S\to R$ with $\tau(r\delta_e+I)=\tau'(r\delta_e) =rc(e,e)=r1_e=r$ if $r\in R1_e$, as $c$ is normalized.
\end{proof}

In addition to the case that $c$ is trivial, we note that if \eqref{eq:extension} is an extension of inverse semigroups which admits an order-preserving and idempotent-section $j\colon S\to T$, then the corresponding normalized $2$-cocycle $c$ satisfies $c(t,s^*s)=1_{ss^*}$ when $s\leq t$ since $j(t)j(s^*s)=j(ts^*s)=j(s)$ by Lemma~\ref{l:donsig}.
Recall that an inverse semigroup $S$ is $E$-unitary if $s\geq e$ with $e\in E(S)$ implies $s\in E(S)$ and $S$ is $0$-$E$-unitary if $S$ has a zero and $s\geq e\neq 0$ with $e\in E(S)$ implies $s\in E(S)$; see~\cite{Lawson} for details.
Examples of inverse semigroups  satisfying the second condition of Proposition~\ref{p:good.cases} include  $E$-unitary and $0$-$E$-unitary inverse semigroups  and the inverse semigroup of compact open bisections of a Hausdorff ample groupoid $\mathscr G$. 

We now establish a universal property for the crossed product, generalizing that of skew inverse semigroup rings~\cite{Skewasconv}.  Let $S$ be an inverse semigroup with a non-degenerate action $\alpha\colon S\to \End_c(R)$ on a ring $R$ and let $c\colon S\times S\to \til R$ be a normalized $2$-cocycle.  Then a \emph{covariant representation} of $(\alpha,c)$ in a ring $A$ consists of a ring homomorphism $\rho\colon R\to A$ and a map $\psi\colon S\to A$ such that:
\begin{itemize}
\item [(C1)] $\psi(s)\rho(r) = \rho(sr)\psi(s)$;
\item [(C2)] $\rho(1_e) = \psi(e)$;
\item [(C3)] $\psi(s)\psi(t) = \rho(c(s,t))\psi(st)$; and
\item [(C4)] $\psi(ss^*)\psi(s) =\psi(s)$
\end{itemize}
for all $s,t\in S$, $r\in R$ and $e\in E(S)$.  Note that (C4) is equivalent to
\begin{itemize}
\item [(C4')] $\psi(s)\psi(s^*s)=\psi(s)$
\end{itemize}
in the presence of (C1), (C2).
Indeed, \[\psi(s)\psi(s^*s)=\psi(s)\rho(1_{s^*s})=\rho(1_{ss^*})\psi(s)=\psi(ss^*)\psi(s)=\psi(s)\] by (C1), (C2) and (C4).  So (C4) implies (C4') and the reverse implication is dual.  In the case that $c$ is trivial, the conjunction of (C3) and (C4) is equivalent to $\psi$ being a semigroup homomorphism, and so the notion of covariant representation for skew inverse semigroup rings given here coincides with that considered in~\cite{Skewasconv}.

\begin{Prop}\label{p:universal.prop}
Let $S$ be an inverse semigroup with a non-degenerate action $\alpha\colon S\to \End_c(R)$ on a ring $R$ and $c\colon S\times S\to \til R$ a normalized $2$-cocycle. Let $\rho\colon R\to R\rtimes_{\alpha,c} S$ be the homomorphism from the proof of Proposition~\ref{p:diagonalsub} and let $\psi\colon S\to R\rtimes_{\alpha,c} S$ be given by $\psi(s) = 1_{ss^*}\delta_s+I$.  Then $(\rho,\psi)$ is a covariant representation and it is the universal covariant representation, that is, if $\rho'\colon R\to A$ and $\psi'\colon S\to A$ give a covariant representation, then there is a unique homomorphism $\pi\colon R\rtimes_{\alpha,c}S\to A$ such that $\rho'=\pi\rho$ and $\psi'=\pi\psi$.  Namely, one has  $\pi(r\delta_s+I) = \rho'(r)\psi'(s)$.
\end{Prop}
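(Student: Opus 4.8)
The plan is to split the argument into two halves. First I would verify that the specific pair $(\rho,\psi)$ satisfies the covariance axioms (C1)--(C4), so that it is at least one covariant representation. Second, I would establish the universal property by defining the obvious additive map on the free object $\bigoplus_{s\in S}R\delta_s$, checking it is multiplicative using (C1) and (C3) of the \emph{given} representation, and then showing it annihilates the ideal $I$; the bulk of the work sits in this last step.

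For the first half I would compute each axiom directly in $R\rtimes_{\alpha,c}S$, writing $\psi(s)=1_{ss^*}\delta_s+I$ and $\rho(r)=r\delta_e+I$ for $r\in R1_e$ and extending additively via $R=\sum_e R1_e$. Axiom (C2) is immediate from $\rho(1_e)=1_e\delta_e+I=\psi(e)$, and (C4) reduces to the product $(1_{ss^*}\delta_{ss^*})(1_{ss^*}\delta_s)+I$, which collapses to $\psi(s)$ once one invokes $c(ss^*,s)=1_{ss^*}$ from Proposition~\ref{p:props.normalized.cocycle}(1). For (C3) I would expand both $\psi(s)\psi(t)$ and $\rho(c(s,t))\psi(st)$; each simplifies to $c(s,t)\delta_{st}+I$ after using $\alpha_s(1_{tt^*})=1_{stt^*s^*}$, the support identity $p(c(s,t))=stt^*s^*$, and Proposition~\ref{p:props.normalized.cocycle}(1) applied to the pair involving $st$. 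Finally (C1), after reducing to $r\in R1_e$, boils down to the coefficient identity $c(s,e)=c(ses^*,s)$, which is precisely Proposition~\ref{p:props.normalized.cocycle}(9).

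For the universal property I would set $\widehat\pi\bigl(r\delta_s\bigr)=\rho'(r)\psi'(s)$ and extend additively. That $\widehat\pi$ respects multiplication follows by expanding $\widehat\pi(r\delta_s\cdot r'\delta_t)$, then rewriting $\rho'(r)\psi'(s)\rho'(r')\psi'(t)$ by pushing $\psi'(s)$ past $\rho'(r')$ with (C1) and merging $\psi'(s)\psi'(t)$ with (C3). The crux, and what I expect to be the main obstacle, is showing $\widehat\pi(I)=0$: since $\widehat\pi$ sends the generator $r\delta_s-rc(t,s^*s)^*\delta_t$ (with $s\le t$) to $\rho'(r)\bigl(\psi'(s)-\rho'(c(t,s^*s)^*)\psi'(t)\bigr)$, everything hinges on the single identity
\[
\rho'(c(t,s^*s)^*)\psi'(t)=\psi'(s)\qquad (s\le t).
\]
To prove it I would start from (C3) for the pair $(t,s^*s)$, using $t\,s^*s=s$, to obtain $\psi'(t)\psi'(s^*s)=\rho'(c(t,s^*s))\psi'(s)$, and replace $\psi'(s^*s)=\rho'(1_{s^*s})$ by (C2). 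The key preliminary computation is $p(c(t,s^*s))=t\,s^*s\,t^*=ss^*$ (a consequence of $s=ts^*s$ and the support axiom for $c$), which says $c(t,s^*s)$ is a unit of the corner $Z(R)1_{ss^*}$; left-multiplying by $\rho'(c(t,s^*s)^*)$ and using (C2) together with (C4) reduces the right side to $\psi'(s)$. The one subtlety is the leftover factor $\rho'(1_{s^*s})$ on the left side: applying (C1) rewrites $\psi'(t)\rho'(1_{s^*s})$ as $\rho'(1_{ts^*st^*})\psi'(t)=\rho'(1_{ss^*})\psi'(t)$, and this idempotent is then absorbed by $\rho'(c(t,s^*s)^*)$ since $c(t,s^*s)^*\in Z(R)1_{ss^*}$. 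This yields the displayed identity, hence $\widehat\pi(I)=0$, so $\widehat\pi$ descends to $\pi$ with $\pi(r\delta_s+I)=\rho'(r)\psi'(s)$.

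It then remains to check compatibility and uniqueness. For $r\in R1_e$ one has $\pi\rho(r)=\rho'(r)\psi'(e)=\rho'(r)$ by (C2) (extending additively), while $\pi\psi(s)=\rho'(1_{ss^*})\psi'(s)=\psi'(ss^*)\psi'(s)=\psi'(s)$ by (C2) and (C4), so $\pi\rho=\rho'$ and $\pi\psi=\psi'$. For uniqueness I would note that $R\rtimes_{\alpha,c}S$ is generated additively by the cosets $r\delta_s+I$, and that in the crossed product $r\delta_s+I=r1_{ss^*}\delta_s+I=\rho(r1_{ss^*})\psi(s)$ (the first equality from the relation $s\le s$ with $c(s,s^*s)=1_{ss^*}$, the second a short computation for elements of $R1_{ss^*}$). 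Thus any ring homomorphism $\pi'$ with $\pi'\rho=\rho'$ and $\pi'\psi=\psi'$ must satisfy $\pi'(r\delta_s+I)=\rho'(r1_{ss^*})\psi'(s)$, and since $\rho'(r1_{ss^*})\psi'(s)=\rho'(r)\psi'(ss^*)\psi'(s)=\rho'(r)\psi'(s)=\pi(r\delta_s+I)$ by (C2) and (C4), we conclude $\pi'=\pi$.
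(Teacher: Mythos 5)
Your proposal is correct and takes essentially the same approach as the paper: verify (C1)--(C4) using Proposition~\ref{p:where.it.lies} and parts (1) and (9) of Proposition~\ref{p:props.normalized.cocycle}, define the map $r\delta_s\mapsto\rho'(r)\psi'(s)$ on $\bigoplus_{s\in S}R\delta_s$ and check multiplicativity via (C1) and (C3), kill $I$ by cancelling the invertible cocycle value $c(t,s^*s)\in (Z(R)1_{ss^*})^\times$, and prove uniqueness by factoring $r\delta_s+I=\rho(r1_{ss^*})\psi(s)$. The only cosmetic difference is in the ideal-killing step, where the paper first rewrites $c(t,s^*s)=c(ss^*,t)$ via Proposition~\ref{p:props.normalized.cocycle}(9) and applies (C3) to the pair $(ss^*,t)$, while you apply (C3) directly to $(t,s^*s)$ and use (C1) to move $\rho'(1_{s^*s})$ across $\psi'(t)$ --- both reduce to the same cancellation.
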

\begin{proof}
We first check that $\rho,\psi$ yield a covariant representation.  If $r\in R$ with $r=\sum_{i=1}^nr_i$ with $r_i\in R1_{e_i}$, then $\rho(r)=\sum_{i=1}^nr_i\delta_{e_i}+I$ and so
\begin{equation}\label{eq:covariant.eq1}
\psi(s)\rho(r) = \sum_{i=1}^n1_{ss^*}\delta_s \cdot r_i\delta_{e_i}+I = \sum_{i=1}^n(sr_i)c(s,e_i)\delta_{se_i}+I.
\end{equation}
One the other hand, $sr = \sum_{i=1}^nsr_i$ and $sr_i\in R1_{se_is^*}$.  Therefore,
\begin{equation}\label{eq.covariant.eq2}
\rho(sr)\psi(s) = \sum_{i=1}^nsr_i\delta_{se_is^*}\cdot 1_{ss^*}\delta_s+I = \sum_{i=1}^n (sr_i)c(se_is^*,s)\delta_{se_i}+I.
\end{equation}
It follows that the right hand sides of \eqref{eq:covariant.eq1} and \eqref{eq.covariant.eq2} are equal by Proposition~\ref{p:props.normalized.cocycle}(9), yielding (C1).  For (C2), note that $\rho(1_e) = 1_e\delta_e+I = \psi(e)$ by definition.  For (C3), we compute
\begin{align*}
\psi(s)\psi(t) &= 1_{ss^*}\delta_s\cdot 1_{tt^*}\delta_t+I = 1_{ss^*}(s1_{tt^*})c(s,t)\delta_{st}+I\\ &= c(s,t)1_{stt^*s^*}\delta_{st}+I= c(s,t)\delta_{stt^*s^*}\cdot 1_{stt^*s^*}\delta_{st}+I\\ &= \rho(c(s,t))\psi(st)
\end{align*}
since $c(s,t)\in R1_{stt^*s^*}$.  Here we have used Proposition~\ref{p:where.it.lies} to obtain $s(1_{tt^*}) = 1_{stt^*s^*}$ and we have used Proposition~\ref{p:props.normalized.cocycle}(1) to get $c(stt^*s^*,st)=1_{stt^*s^*}$.  This verifies (C3).  Finally, (C4) follows because $\psi(ss^*)\psi(s) = 1_{ss^*}\delta_{ss^*}\cdot 1_{ss^*}\delta_s +I= 1_{ss^*}\delta_s+I$ by Proposition~\ref{p:props.normalized.cocycle}(1).

Suppose now that $\rho'\colon R\to A$ and $\psi'\colon S\to A$ give rise to a covariant representation.  First we check that $r\delta_s\mapsto \rho'(r)\psi'(s)$ yields a well-defined homomorphism $\pi'\colon \bigoplus_{s\in S}R\delta_s\to A$.  Indeed,
\begin{align*}
\pi'(r_1\delta_{s_1})\pi'(r_2\delta_{s_2}) & = \rho'(r_1)\psi'(s_1)\rho'(r_2)\psi'(s_2) = \rho'(r_1)\rho'(s_1r_2)\psi'(s_1)\psi'(s_2)\\ &= \rho'(r_1(s_1r_2)c(s_1,s_2))\psi'(s_1s_2) = \pi'(r_1\delta_{s_1}\cdot r_2\delta_{s_2})
\end{align*}
 by (C1) and (C3).

 We now check that $I\subseteq\ker \pi'$.  Let $s\leq t$ and $r\in R$.
First note that $ts^*st^* = ss^*$ and so $c(t, s^*s)=c(ss^*,t)=c(ss^*,t)1_{ss^*}$ by Proposition~\ref{p:props.normalized.cocycle}(9).
Therefore,
\begin{align*}
\pi'(rc(t,s^*s)^*\delta_t) &= \rho'(rc(ss^*,t)^*1_{ss^*})\psi'(t) = \rho'(rc(ss^*,t)^*)\rho'(1_{ss^*})\psi'(t)\\ & =  \rho'(rc(ss^*,t)^*)\psi'(ss^*)\psi'(t)=\rho'(rc(ss^*,t)^*c(ss^*,t))\psi'(s)\\ &=\rho'(r1_{ss^*})\psi'(s) = \rho'(r)\rho'(1_{ss^*})\psi'(s)\\ &=\rho'(r)\psi'(ss^*)\psi'(s)= \rho'(r)\psi'(s)\\ &= \pi'(r\delta_s)
\end{align*}
 by (C2), (C3) and (C4).  Thus $I\subseteq\ker \pi'$ and so $\pi\colon R\rtimes_{\alpha,c} S\to A$ given by $\pi(r\delta_s+I) = \rho'(r)\psi'(s)$ is well defined. If $r\in R$ with $r=\sum_{i=1}^n r_i$ where $r_i\in R1_{e_i}$, then $\pi(\rho(r)) = \sum_{i=1}^n\pi'(r_i\delta_{e_i}) =\sum_{i=1}^n\rho'(r_i)\psi'(e_i) = \sum_{i=1}^n\rho'(r_i)\rho'(1_{e_i}) =\rho'(r)$ using (C2).  Also \[\pi(\psi(s)) = \pi'(1_{ss^*}\delta_s) = \rho'(1_{ss^*})\psi'(s) = \psi'(ss^*)\psi'(s)=\psi'(s)\] using (C2) and (C4).

 For uniqueness, if $\gamma\colon R\times_{\alpha,c} S\to A$ is a homomorphism with $\gamma\rho=\rho'$ and $\gamma\psi=\psi'$, then since $r\delta_s+I = r1_{ss^*}\delta_s+I$ as $s\leq s$ and $c(s,s^*s)=1_{ss^*}=c(ss^*,s)$ by Proposition~\ref{p:props.normalized.cocycle}(1), we have that
 \begin{align*}
 \gamma(r\delta_s+I) &= \gamma(r1_{ss^*}\delta_s+I) = \gamma(r1_{ss^*}\delta_{ss^*}\cdot 1_{ss^*}\delta_s+I)\\ &=\gamma(\rho(r1_{ss^*}))\gamma(\psi(s)) = \rho'(r1_{ss^*})\psi'(s)\\  &=\rho'(r)\psi'(ss^*)\psi'(s)=\rho'(r)\psi'(s)
 \end{align*}
 by (C2) and (C4), as required.  This completes the proof.
\end{proof}

We note that our notion of crossed product seems related to, but different from, the $C^*$-algebraic twisted action crossed product in~\cite{ExelBuss}. In their notion, the action of an element of the inverse semigroup on a ring is only partially defined.  Also, they loosen the requirement on how the action of a product of semigroup elements behaves.  We could also allow greater generality by letting $\alpha\colon S\to \End_c(R)$ just be a map, not a homomorphism,  and allowing the $2$-cocycle $c$ to take values in the non-commutative inverse semigroup $\bigcup_{e\in E(S)}(R1_e)^\times$, but the conditions on $\alpha$ and $c$ that yield associativity are slightly more complicated and we shall not need this more general construction here in any event as we shall primarily be interested in the case that $R$ is commutative, in which case the general construction will reduce to ours.

\subsection{Twisted Steinberg algebras}
Our goal is to show that twisted Steinberg algebras of Hausdorff ample groupoids are inverse semigroup crossed products, generalizing the result of~\cite{GonRoy2017a} for the case of untwisted Steinberg algebras and skew inverse semigroup rings.  Let $\mathscr G$ be a Hausdorff ample groupoid and let
\begin{equation*}
R^\times \times \mathscr G\skel 0\xrightarrow{\,\,\iota\,\,}\Sigma\xrightarrow{\,\,\p\,\,} \mathscr G
\end{equation*}
be a discrete $R^\times$-twist.  The associated twisted Steinberg algebra $A_R(\mathscr G;\Sigma)$ consists of all locally constant functions $f\colon \Sigma\to R$ which are $R^\times$-anti-equivariant, in the sense that $f(rs) = r\inv f(s)$ for $s\in \Sigma$ and $r\in R^\times$, and have compact support modulo $R^\times$, that is, $\p(\supp(f))$ is compact.  Addition and the $R$-module structure are pointwise.  The product is defined as follows.  Choose a set theoretic section $p\colon \mathscr G\to \Sigma$ (we do not assume continuity of $p$).  Then the convolution is given by
\[f\ast f'(s) = \sum_{\ran(g)=\ran(\p(s)))}f(p(g))f'(p(g)\inv s).\]  This does not depend on the choice of $p$.  Note that we are following the conventions of~\cite{twistedreconstruction} and not~\cite{twists}, which uses equivariant functions.

Let $A=C_c(\mathscr G\skel 0,R)$ be the ring of compactly supported locally constant mappings $\mathscr G\skel 0\to R$ with pointwise operations.  There is a natural zero-preserving action $\alpha\colon \Gamma_c(\mathscr G)\to \End_c(A)$ given by
\[\alpha_U(f)(x) =  \begin{cases} f(\dom((\ran|_U)\inv(x))), & \text{if}\ x\in \ran(U)\\ 0, & \text{else.}\end{cases}\]  Notice that the image of $\alpha_U$ is $A1_{\ran(U)}$ and that the action is non-degenerate and additive.  Moreover, $\til A = C_c(\mathscr G\skel 0,R^\times\cup \{0\})$ with the action we previously considered in Proposition~\ref{p:ismodule}.  (That $\alpha$ is an action is essentially the same proof as in Proposition~\ref{p:ismodule}.)  Hence, we can consider a normalized $2$-cocycle $c$ associated to the twist as per Theorem~\ref{t:classify} and form the crossed product $A\rtimes_{\alpha,c} \Gamma_c(\mathscr G)$ and the crossed product ring is independent of the choice of $2$-cocycle up to isomorphism. More precisely, we fix a set theoretic section $j\colon \Gamma_c(\mathscr G)\to \Gamma_c(\Sigma)$ with $j|_{E(\Gamma_c(\mathscr G))} = (\Gamma_c(\p)|_{E(\Gamma_c(\Sigma))})\inv$.  One then has
\begin{align*}
j(U)j(V) &= \iota(\{(c(U,V)(x),x)\mid x\in \ran(UV)\})j(UV) \\ &= \{c(U,V)(\ran(\p(s)))s\mid s\in j(UV)\}.
\end{align*}
 Our goal it to show that the crossed product $A\rtimes_{\alpha,c} \Gamma_c(\mathscr G)$ is isomorphic to $A_R(\mathscr G;\Sigma)$.  Note that $\bigoplus_{U\in \Gamma_c(\mathscr G)}A\delta_U$ and $A\rtimes_{\alpha,c} \Gamma_c(\mathscr G)$ are $R$-algebras with $r(a\delta_U) = (ra)\delta_U$ and the corresponding induced $R$-algebra structure on the crossed product, as is easily checked.  Note that Proposition~\ref{p:good.cases} and  Proposition~\ref{p:diagonalsub} apply in this setting to embed $A$ into the crossed product since $\mathscr G$ is Hausdorff.

We proceed by first establishing a number of lemmas.  We retain the notation $I$ for the two-sided ideal in Proposition~\ref{p:define.crossed.prod}(2).

\begin{Lemma}\label{l:support}
Let $a\in A$ and $U\in \Gamma_c(\mathscr G)$.   Let $V=\supp(a)\cap \ran(U)$.  Then $a\delta_U+I = ac(U,\dom(VU))\delta_{VU} +I$ and $\supp(ac(U,\dom(VU))=\ran(VU)$.
\end{Lemma}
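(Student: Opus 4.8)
The plan is to express both sides as cosets of the single element $a1_V\delta_U=a1_{\ran(U)}\delta_U$ modulo $I$, applying the defining generators of $I$ from Proposition~\ref{p:define.crossed.prod}(2) twice. First I would record the relevant bisection identities. Since $V=\supp(a)\cap\ran(U)$ is an idempotent of $\Gamma_c(\mathscr G)$ contained in $\ran(U)=UU^*$, the product $VU$ satisfies $VU\leq U$, and one has $(VU)^*(VU)=\dom(VU)$ together with $\ran(VU)=(VU)(VU)^*=VUU^*V=V\ran(U)V=V$. Moreover $VU=U\dom(VU)$, so the cocycle axiom $p(c(s,t))=stt^*s^*$ applied with $s=U$ and the idempotent $t=\dom(VU)$ gives $p(c(U,\dom(VU)))=U\dom(VU)U^*=\ran(VU)=V$. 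Hence $c(U,\dom(VU))\in(Z(A)1_V)^\times$ inside $\til A=C_c(\mathscr G\skel 0,R^\times\cup\{0\})$, and therefore $c(U,\dom(VU))c(U,\dom(VU))^*=1_V$.

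Next I would use the generator of $I$ indexed by $VU\leq U$, namely $r\delta_{VU}-rc(U,\dom(VU))^*\delta_U\in I$ for every $r\in A$. Taking $r=ac(U,\dom(VU))$ and invoking $c(U,\dom(VU))c(U,\dom(VU))^*=1_V$ from the previous step yields
\[ac(U,\dom(VU))\delta_{VU}+I=a1_V\delta_U+I.\]
Separately, the reflexive instance $s=t=U$ of the same generator gives $r\delta_U-rc(U,U^*U)^*\delta_U\in I$, and since $c(U,U^*U)=1_{UU^*}=1_{\ran(U)}$ by Proposition~\ref{p:props.normalized.cocycle}(1), this reads $r\delta_U+I=r1_{\ran(U)}\delta_U+I$. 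Because $a1_{\supp(a)}=a$, we have $a1_V=a1_{\supp(a)}1_{\ran(U)}=a1_{\ran(U)}$, so taking $r=a$ gives $a\delta_U+I=a1_{\ran(U)}\delta_U+I=a1_V\delta_U+I$. Chaining this with the displayed identity proves the first assertion.

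For the support claim I would note that $c(U,\dom(VU))$, being an element of $\til A$ with $p$-image $V$, is supported exactly on $\ran(VU)=V$ and takes values in $R^\times$ there. Since multiplication by a unit preserves nonvanishing, pointwise multiplication gives $\supp(ac(U,\dom(VU)))=\supp(a)\cap V=\supp(a)\cap\ran(U)=V=\ran(VU)$, as required.

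The whole argument is essentially bookkeeping with idempotent bisections, so the only real care lies in pinning down $c(U,\dom(VU))$: one must verify that its $p$-image is precisely $\ran(VU)$ (and not some larger idempotent inside $\ran(U)$) so that $c(U,\dom(VU))c(U,\dom(VU))^*$ collapses to $1_V$, and one must not overlook the reflexive relation $s=t=U$ that silently truncates $a$ to $a1_{\ran(U)}$. I expect the crux to be recognizing these two instances of the generating relations of $I$ and carrying out the cocycle-axiom evaluation of $p(c(U,\dom(VU)))$; everything else reduces to routine manipulation.
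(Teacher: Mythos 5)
Your proof is correct and follows essentially the same route as the paper: both use the generator of $I$ at $U\leq U$ (with Proposition~\ref{p:props.normalized.cocycle}(1)) to truncate $a$ to $a1_{\ran(U)}=a1_V$, and the generator at $VU\leq U$ together with $c(U,\dom(VU))c(U,\dom(VU))^*=1_{\ran(VU)}$ to identify the two cosets, finishing the support claim by noting $\supp(c(U,\dom(VU)))=\ran(VU)\subseteq\supp(a)$. The only difference is cosmetic: you spell out via the cocycle axiom that $p(c(U,\dom(VU)))=V$, a step the paper leaves implicit.
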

\begin{proof}
We have that $U\leq U$ and so $a\delta_U+I = ac(U,\dom(U))^*\delta_U+I = a1_{\ran(U)}\delta_U+I$ by Proposition~\ref{p:props.normalized.cocycle}(1), as $c$ is normalized. Then $a1_{\ran(U)} = a1_V=a1_{\ran(VU)}$ by definition of $V$.  Since $VU\leq U$ we have that
\begin{align*}
ac(U,\dom(VU))\delta_{VU}+I &= ac(U,\dom(VU))c(U,\dom(VU))^*\delta_{U}+I \\ &= a1_{\ran(VU)}\delta_U+I= a\delta_U+I,
\end{align*}
 by the above.  Since $\supp(c(U,\dom(VU)))=\ran(VU) \subseteq \supp(a)$, we have that $\supp(ac(U,\dom(VU))=\ran(VU)$.
\end{proof}

Thus $A\rtimes_{\alpha,c}\Gamma_c(\mathscr G)$ is spanned by elements of the form $a\delta_U+I$ with $\supp(a)=\ran(U)$.

\begin{Lemma}\label{l:disjoint.union}
Suppose that $U\in \Gamma_c(\mathscr G)$ is a union of pairwise disjoint compact open bisections $U_1,\ldots, U_k$ and $\supp(a)=\ran(U)$.  Then $a\delta_U+I = \sum_{i=1}^k a_i\delta_{U_i}+I$ with $\supp(a_i) = \ran(U_i)$ for $i=1,\ldots, k$.
\end{Lemma}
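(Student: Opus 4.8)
The plan is to rewrite each summand $a_i\delta_{U_i}$ as a multiple of $\delta_U$ using the defining relations of the ideal $I$, and then to recover $a$ by matching the coefficient of $\delta_U$. First I would record the geometry of the decomposition: since $U$ is a bisection and $U=U_1\sqcup\cdots\sqcup U_k$, the map $\ran|_U$ is injective, so the compact open sets $\ran(U_1),\ldots,\ran(U_k)$ are pairwise disjoint and $\ran(U)=\ran(U_1)\sqcup\cdots\sqcup\ran(U_k)$. Hence the idempotents $1_{\ran(U_i)}$ of $A$ are orthogonal with $\sum_{i=1}^k 1_{\ran(U_i)}=1_{\ran(U)}$, and the hypothesis $\supp(a)=\ran(U)$ says precisely that $a1_{\ran(U)}=a$.

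Next I would analyze the relevant cocycle values. Each $U_i\subseteq U$ gives $U_i\le U$ with $U_i^*U_i=\dom(U_i)$, so the generator $a_i\delta_{U_i}-a_ic(U,\dom(U_i))^*\delta_U$ of $I$ yields the congruence $a_i\delta_{U_i}+I=a_ic(U,\dom(U_i))^*\delta_U+I$. The cocycle axiom $p(c(s,t))=stt^*s^*$ gives $p(c(U,\dom(U_i)))=U\dom(U_i)U^*=\ran(U_i)$, using $U\dom(U_i)=U_i$; thus $c(U,\dom(U_i))$ lies in the unit group $(A1_{\ran(U_i)})^\times\subseteq\til A$, so in particular $c(U,\dom(U_i))c(U,\dom(U_i))^*=1_{\ran(U_i)}$ and $c(U,\dom(U_i))$ is nonvanishing on $\ran(U_i)$.

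With these two ingredients the construction is forced: I would put $a_i=a1_{\ran(U_i)}c(U,\dom(U_i))$. Since $a1_{\ran(U_i)}$ is supported on $\ran(U)\cap\ran(U_i)=\ran(U_i)$ and $c(U,\dom(U_i))$ is invertible there, $\supp(a_i)=\ran(U_i)$, as required. Moreover $a_ic(U,\dom(U_i))^*=a1_{\ran(U_i)}$, so summing the congruences above and using $\sum_i a1_{\ran(U_i)}=a1_{\ran(U)}=a$ gives
\[
\sum_{i=1}^k a_i\delta_{U_i}+I=\Bigl(\sum_{i=1}^k a_ic(U,\dom(U_i))^*\Bigr)\delta_U+I=a\delta_U+I.
\]
The one computation that genuinely uses the twist is the support identity $p(c(U,\dom(U_i)))=\ran(U_i)$, which follows from the cocycle axiom together with $U\dom(U_i)=U_i$; granting it, the rest is bookkeeping with the orthogonal idempotents $1_{\ran(U_i)}$, and no deeper properties of $c$ are needed.
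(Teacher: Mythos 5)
Your proof is correct and follows essentially the same route as the paper's: the paper takes $a_i = ac(U,\dom(U_i))$, which coincides with your $a_i = a1_{\ran(U_i)}c(U,\dom(U_i))$ since $c(U,\dom(U_i))$ is supported on $\ran(U_i)$, and then uses exactly the same ideal generators for $U_i\leq U$ together with the orthogonal decomposition $1_{\ran(U)}=\sum_i 1_{\ran(U_i)}$. Nothing further is needed.
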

\begin{proof}
Let $a_i = ac(U,\dom(U_i))$. Then $\supp(a_i)=\supp(a)\cap \supp(c(U,\dom(U_i))) = \supp(a)\cap \ran(U_i) = \ran(U_i)$ as $U_i =U\cdot \dom(U_i)$.  Next observe that since $U_i\leq U$, we have that $a_i\delta_{U_i}+I =ac(U,\dom(U_i))\delta_{U_i}+I =ac(U,\dom(U_i))c(U,\dom(U_i))^*\delta_U+I = a1_{\ran(U_i)}\delta_U+I$ and so $\sum_{i=1}^k a_i\delta_{U_i}+I = \sum_{i=1}^k a1_{\ran(U_i)}\delta_U+I = a1_{\ran(U)}\delta_U +I = a\delta_U+I$ as the $\ran(U_i)$ are pairwise disjoint (since $U$ is a bisection) with union $\ran(U)$.
 \end{proof}

Our final lemma finds us a sort of normal form for $A\rtimes_{\alpha,c}\Gamma_c(\mathscr G)$.
\begin{Lemma}\label{l:disjointify}
Every element of $A\rtimes_{\alpha,c}\Gamma_c(\mathscr G)$ is equal to one of the form $\sum_{i=1}^k a_i\delta_{U_i}+I$ where $U_1,\ldots, U_k$ are pairwise disjoint non-empty compact open bisections and $\supp(a_i)= \ran(U_i)$, for $i=1,\ldots, k$ with possibly $k=0$.
\end{Lemma}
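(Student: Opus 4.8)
The plan is to take an arbitrary element, which by definition of $R\rtimes_{\alpha,c}\Gamma_c(\mathscr G) = \bigoplus_{U\in\Gamma_c(\mathscr G)}A\delta_U/I$ is a finite sum $\sum_{j=1}^n b_j\delta_{V_j}+I$ with $b_j\in A$ and $V_j\in\Gamma_c(\mathscr G)$, and to massage it into the claimed form in three stages: first normalize the supports term by term, then refine the bisections so that they become pairwise disjoint, and finally collect coefficients and re-normalize.

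For the first stage I would apply Lemma~\ref{l:support} to each summand: it produces a compact open bisection $W_j\subseteq V_j$ and a coefficient $a_j$ with $b_j\delta_{V_j}+I=a_j\delta_{W_j}+I$ and $\supp(a_j)=\ran(W_j)$. So the element equals $\sum_{j=1}^n a_j\delta_{W_j}+I$ with each term already satisfying the support condition, which is what is needed to feed into Lemma~\ref{l:disjoint.union}.

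The key geometric stage is refining $W_1,\ldots,W_n$ into pairwise disjoint pieces. Here I would use that $\mathscr G$ is Hausdorff, so every compact open subset is clopen and the compact open subsets of $W=\bigcup_j W_j$ form a Boolean algebra. The finite subalgebra generated by $W_1,\ldots,W_n$ has finitely many atoms $U_1,\ldots,U_k$; these are nonempty, pairwise disjoint, and each $W_j$ is the disjoint union of the atoms it contains. Moreover each atom lies inside some $W_j$ and, being an open subset of a bisection, is itself a compact open bisection. (This is exactly the atom argument already used in the surjective case of Proposition~\ref{p:preserve.epimono}.) Applying Lemma~\ref{l:disjoint.union} to each $a_j\delta_{W_j}+I$ rewrites it as $\sum_{i\in I_j}a_{ji}\delta_{U_i}+I$ with $\supp(a_{ji})=\ran(U_i)$, where $I_j=\{i\mid U_i\subseteq W_j\}$. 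Summing over $j$ and collecting coefficients at each atom in the free module $\bigoplus_U A\delta_U$ gives $\sum_{i=1}^k b_i\delta_{U_i}+I$ with $b_i=\sum_{j:\,i\in I_j}a_{ji}$.

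The one point requiring care, and what I expect to be the main obstacle, is that disjointification does not by itself preserve the support condition: although every $a_{ji}$ has support $\ran(U_i)$, cancellation in the sum can make $\supp(b_i)$ a \emph{proper} subset of $\ran(U_i)$. To repair this I would apply Lemma~\ref{l:support} one last time to each $b_i\delta_{U_i}$, replacing $U_i$ by the compact open bisection $U_i'=(\supp(b_i))U_i\subseteq U_i$, after which the support of the new coefficient equals $\ran(U_i')$. The $U_i'$ remain pairwise disjoint since $U_i'\subseteq U_i$, and I would simply discard the indices with $b_i=0$ (equivalently $U_i'=\emptyset$), as those summands vanish. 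This yields an expression of the required form, with $k$ possibly dropping to $0$, completing the proof.
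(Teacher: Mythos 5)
Your proof is correct and follows essentially the same route as the paper's: normalize each term with Lemma~\ref{l:support}, disjointify via the atoms of the finite Boolean algebra of compact open sets (using that $\mathscr G$ is Hausdorff) together with Lemma~\ref{l:disjoint.union}, and then repair the support condition with a final application of Lemma~\ref{l:support}, discarding empty bisections. The cancellation issue you single out is precisely the point the paper also addresses with its last use of Lemma~\ref{l:support}.
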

\begin{proof}
By Lemma~\ref{l:support}, each element of $A\rtimes_{\alpha,c}\Gamma_c(\mathscr G)$ can be written in the form $\sum_{j=1}^mb_j\delta_{V_j}+I$ with the $V_j\in \Gamma_c(\mathscr G)$ and $\supp(b_j)=\ran(V_j)$.    Since $A\delta_{\emptyset}\subseteq I$ by Proposition~\ref{p:define.crossed.prod}, we may assume that all the $V_j$ are non-empty, unless our element is $0$, in which case there is nothing to prove.   The compact open subsets  of $\mathscr G$ form a Boolean algebra because $\mathscr G$ is Hausdorff.  The Boolean algebra generated by $V_1,\ldots, V_m$ is finite and hence is generated by its atoms $W_1,\ldots, W_k$.   Moreover, since $V=V_1\cup\cdots \cup V_m$ is the maximum of this Boolean algebra, we have $W_i=V\cap W_i= (V_1\cap W_i)\cup\cdots\cup (V_m\cap W_i)$ and hence, since $W_i$ is an atom, we must have $W_i=V_j\cap W_i$ for some $j$, that is, $W_i\subseteq V_j$.  Thus the $W_i$ are compact open bisections.  Each $V_j$ is the disjoint union of the $W_i$ contained in it and so Lemma~\ref{l:disjoint.union} lets us write $b_j\delta_{V_j}+I=\sum_{W_i\subseteq V_j} b_{ji}\delta_{W_i}+I$ with $\supp(b_{ji}) = \ran(W_i)$.  Thus \[\sum_{j=1}^mb_j\delta_{V_j}+I=\sum_{j=1}^m\sum_{W_i\subseteq V_i} b_{ji}\delta_{W_i}+I=\sum_{i=1}^k d_i\delta_{W_i}+I\] with $\supp(d_i)\subseteq \ran(W_i)$.   Using Lemma~\ref{l:support}, we can replace $d_i\delta_{W_i}$ by $a_i\delta_{U_i}$ with $U_i\subseteq W_i$ and $\supp(a_i)=\ran(U_i)$.    As $A\delta_{\emptyset}\subseteq I$ by Proposition~\ref{p:define.crossed.prod}, we may remove also the empty $U_i$.  This completes the proof.
\end{proof}

We are now ready to prove the main theorem of this section.

\begin{Thm}
Let $R$ be a commutative ring, $\mathscr G$ a Hausdorff ample groupoid and $R^\times\times \mathscr G\skel 0\xrightarrow{\,\,\iota\,\,}\Sigma\xrightarrow{\,\,\p\,\,} \mathscr G$ a twist.  Let $j\colon \Gamma_c(\mathscr G)\to \Gamma_c(\Sigma)$ be an idempotent-preserving set theoretic section and let $c\colon \Gamma_c(\mathscr G)\times \Gamma_c(\mathscr G)\to C_c(\mathscr G\skel 0, R^\times \cup\{0\})$ the corresponding normalized $2$-cocycle.  Then the $R$-algebras $C_c(\mathscr G\skel 0,R)\rtimes_{\alpha,c} \Gamma_c(\mathscr G)$ and  $A_R(\mathscr G;\Sigma)$ are isomorphic.
\end{Thm}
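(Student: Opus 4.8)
The plan is to apply the universal property of the crossed product (Proposition~\ref{p:universal.prop}), with the ring there taken to be $A=C_c(\mathscr G\skel 0,R)$ and the inverse semigroup taken to be $\Gamma_c(\mathscr G)$, by exhibiting a covariant representation of $(\alpha,c)$ inside $A_R(\mathscr G;\Sigma)$ and then showing the induced homomorphism is bijective. First I would define $\rho'\colon A\to A_R(\mathscr G;\Sigma)$ by letting $\rho'(a)$ be the unique $R^\times$-anti-equivariant function supported on $\iota(R^\times\times \mathscr G\skel 0)$ with $\rho'(a)(\iota(1,x))=a(x)$; a short convolution computation identifies $\rho'$ with the usual diagonal embedding, so it is a ring homomorphism. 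Next I would define $\psi'\colon \Gamma_c(\mathscr G)\to A_R(\mathscr G;\Sigma)$ by letting $\psi'(U)$ be the unique $R^\times$-anti-equivariant function supported on $\p\inv(U)=R^\times\cdot j(U)$ that takes the value $1$ on $j(U)$. The basic computation underlying everything is that for $\sigma\in j(U)$ one has $(\rho'(a)\ast\psi'(U))(\sigma)=a(\ran(\p(\sigma)))$, independent of the auxiliary section used to define the convolution.

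The axioms (C2) and (C4) are immediate from the definitions and the computation above, and (C1) reduces, after evaluating both sides on $j(U)$, to the identity $(Ua)(\ran(\p(\sigma)))=a(\dom(\p(\sigma)))$ coming from the definition of $\alpha_U$. The one genuinely substantive axiom is (C3), namely $\psi'(U)\ast\psi'(V)=\rho'(c(U,V))\ast\psi'(UV)$, and this is where the proof really happens. Evaluating the left-hand side at $\sigma\in j(UV)$, exactly one term survives the convolution sum, corresponding to the factorization $\p(\sigma)=tt'$ with $t\in U$ and $t'\in V$. Writing the relevant section values as $r_U\sigma_U$ and $r_V\sigma_V$ with $\sigma_U\in j(U)$, $\sigma_V\in j(V)$ and $r_U,r_V\in R^\times$, anti-equivariance gives $(\psi'(U)\ast\psi'(V))(\sigma)=(r_Ur_V)\inv$. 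On the other hand centrality of the twist yields $\sigma=r_Ur_V\,\sigma_U\sigma_V$, while the defining relation $j(U)j(V)=\iota(\gamma(c(U,V)))j(UV)$ (with $\gamma$ as in Proposition~\ref{p:is.central}) forces $\sigma_U\sigma_V=c(U,V)(\ran(\p(\sigma)))\,\sigma$. Combining these gives $(r_Ur_V)\inv=c(U,V)(\ran(\p(\sigma)))$, which is precisely the value of the right-hand side at $\sigma$; since both sides are anti-equivariant and supported on $\p\inv(UV)$, they agree. I expect this matching of the convolution product with the Lausch cocycle $c$ to be the main obstacle, as it is the only step that simultaneously uses the precise definition of $c$ from the section $j$ and the centrality condition.

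With the covariant representation established, Proposition~\ref{p:universal.prop} produces a homomorphism $\pi\colon A\rtimes_{\alpha,c}\Gamma_c(\mathscr G)\to A_R(\mathscr G;\Sigma)$ with $\pi(a\delta_U+I)=\rho'(a)\ast\psi'(U)$, which is automatically $R$-linear. For surjectivity I would use that $\mathscr G$ is Hausdorff and ample, so for $f\in A_R(\mathscr G;\Sigma)$ the compact set $\p(\supp f)$ is covered by finitely many compact open bisections, which can be disjointified; thus $f$ is a finite sum of functions supported on sets $\p\inv(U)$, and each such summand equals $\pi(a\delta_U+I)$ for the $a\in C_c(\ran(U),R)$ determined by $a(\ran(\p(\sigma)))=f(\sigma)$ on $j(U)$. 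For injectivity I would invoke the normal form of Lemma~\ref{l:disjointify}: an arbitrary element may be written $\sum_{i=1}^k a_i\delta_{U_i}+I$ with the $U_i$ pairwise disjoint and $\supp(a_i)=\ran(U_i)$, so its image is a sum of functions with pairwise disjoint supports $\p\inv(U_i)$. Hence $\pi(\sum_{i=1}^k a_i\delta_{U_i}+I)=0$ forces each $\rho'(a_i)\ast\psi'(U_i)=0$, and the value $a_i(\ran(\p(\sigma)))$ on $j(U_i)$ then gives $a_i=0$. Therefore $\pi$ is an isomorphism of $R$-algebras. The bookkeeping in the surjectivity step relies on Lemma~\ref{l:support} to reduce each generator to one with $\supp(a)=\ran(U)$, but the only serious work is the cocycle identity in (C3).
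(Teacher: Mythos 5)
Your proposal is correct, and it reaches the isomorphism by a genuinely different route than the paper. The paper works directly: it defines $\psi(a\delta_U)=f_{a,U}$ on the free algebra $\bigoplus_{U\in\Gamma_c(\mathscr G)}A\delta_U$ (where $f_{a,U}$ is precisely your $\rho'(a)\ast\psi'(U)$, as one sees by evaluating the convolution against the diagonal embedding), verifies multiplicativity by an explicit convolution computation, proves $I\subseteq\ker\psi$ by a second hand computation with the section $j$, gets surjectivity by citing~\cite[Proposition~2.8]{twistedreconstruction} (that $A_R(\mathscr G;\Sigma)$ is spanned by the functions $\til{1}_V$ with $V\in\Gamma_c(\Sigma)$), and finishes injectivity with the normal form of Lemma~\ref{l:disjointify}. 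You instead package the two hand computations into the covariance axioms (C1)--(C4) --- with (C3) carrying the same mathematical content as the paper's multiplicativity check, namely matching the single surviving convolution term against the defining relation $j(U)j(V)=\iota(\gamma(c(U,V)))j(UV)$ using centrality --- and then invoke Proposition~\ref{p:universal.prop}, which the paper proves but, somewhat curiously, never uses in its own proof of this theorem. This buys you well-definedness of $\pi$ on the quotient by $I$ for free (the paper's $I\subseteq\ker\psi$ computation is subsumed by the universal property) and makes the conceptual point that $A_R(\mathscr G;\Sigma)$ realizes the universal covariant representation of $(\alpha,c)$. Your surjectivity argument is also more self-contained: rather than citing the spanning result, you disjointify a finite cover of the compact open set $\p(\supp f)$ by compact open bisections, which is legitimate since $\mathscr G$ is Hausdorff so its compact open subsets form a Boolean algebra (the same device used in Lemma~\ref{l:disjointify}), and then apply Lemma~\ref{l:support}-type bookkeeping to identify each piece as $\pi(a\delta_U+I)$. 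The injectivity step is identical to the paper's. The only point you should write out explicitly is the one you state for (C3) but only implicitly use for (C1) and (C2): both sides of each identity are $R^\times$-anti-equivariant and supported in $\p\inv(W)$ for a single compact open bisection $W$ of $\mathscr G$, so equality on the image of $j$ (equivalently, on one point of each $R^\times$-orbit) does imply equality everywhere.
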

\begin{proof}
We retain the above notation.  Define a mapping from $\bigoplus_{U\in \Gamma_c(\mathscr G)}A\delta_U$ (where $A=C_c(\mathscr G\skel 0,R)$) to $A_R(\mathscr G;\Sigma)$ as follows. Let $\pi\colon R^\times\times \mathscr G\skel 0\to R^\times$ be the projection.   For $a\in A$ and $U\in \Gamma_c(\mathscr G)$, put \[f_{a,U}(s) =\begin{cases} a(\ran(\p(s)))(\pi(\iota\inv(s((\p|_{j(U)})\inv(\p(s)))\inv)))\inv, & \text{if}\ \p(s)\in U\\ 0, & \text{else.}\end{cases} \] (This makes sense by Proposition~\ref{p:local.homeo}.)
So $f_{a,U}(s)$ is $0$, unless $\p(s)\in U$, in which case it is $a(\ran(\p(s)))t$ where $t\in R^\times$ with $ts\in j(U)$.
 Since $U$ is clopen, it is obvious that $f_{a,U}$ is continuous (i.e., locally constant).  Notice that $\p(\supp(f_{a,U})) =\supp(a)U$ is compact.  For anti-equivariance, we compute
\[f_{a,U}(ts) =\begin{cases} a(\ran(\p(s)))(\pi(\iota\inv(ts((\p|_{j(U)})\inv(\p(s)))\inv)))\inv, & \text{if}\ \p(s)\in U\\ 0, & \text{else}\end{cases} \] as $\p(ts)=\p(s)$.  Thus $f_{a,U}(ts) =t\inv f_{a,U}(s)$, and so $f_{a,U}\in A_R(\mathscr G;\Sigma)$.

 We claim that $\psi(a\delta_U) = f_{a,U}$ is a homomorphism $\bigoplus_{U\in \Gamma_c(\mathscr G)}A\delta_U$ onto $A_R(\mathscr G;\Sigma)$.  It is clearly $R$-linear.  We first compute $f_{a,U}\ast f_{b,V}(s)$.  We choose a section $p\colon \mathscr G\to \Sigma$ such that $p|_U = (\p|_{j(U)})\inv$.  Then in order for $f_{a,U}\ast f_{b,V}(s)\neq 0$ we need $\p(s)\in UV$.  Suppose this is the case and put $\p(s)=gh$ with $g\in U$ and $h\in V$.  Let $\til g=(\p|_{j(U)})\inv(g)$ and $\til h =(\p|_{j(V)})\inv(h)$.  Put $\til s = (\p|_{j(UV)})\inv(s)$.  Then we can write $s=t\til s$ with $t\in R^\times$  and $\til g\til h = c(U,V)(\ran(\p(s)))\til s$.  We compute that \[f_{a,U}\ast f_{b,V}(s) = a(\ran(g))b(\ran(h))\pi(\iota\inv(\til g\inv s\til h\inv))\inv.\]  But $\til g\inv s\til h\inv = t\til g\inv \til s\til h\inv = tc(U,V)(\ran(\p(s)))\inv\ran(h)$.  Thus  \[f_{a,U}\ast f_{b,V}(s) =a(\ran(g))b(\ran(h))t\inv c(U,V)(\ran(\p(s))).\]  On the other hand, $a\delta_Ub\delta_V = a(Ub)c(U,V)\delta_{UV}$.  Now $f_{a(Ub)c(U,V),UV}(s)$ is $0$ unless $\p(s)\in UV$.  Then, retaining the previous notation, we have
\begin{align*}
  f_{a(Ub)c(U,V),UV}(s) &=  a(\ran(\p(s)))(Ub)(\ran(\p(s)))c(U,V)(\ran(\p(s)))\pi(\iota\inv(s\til s\inv))\inv\\
   &= a(\ran(g))b(\ran(h))c(U,V)(\ran(\p(s)))t\inv
\end{align*}
This completes the proof that $\psi$ is a homomorphism.

If $V\subseteq \Sigma$ is a compact open bisection (whence $\p|_V$ is injective by Propositions~\ref{p:local.homeo}), let $\til{1}_V\in A_R(G;\Sigma)$ be given by
\[\til{1}_V(s) = \begin{cases}\pi(\iota\inv((\p|_V)\inv(\p(s))s\inv)), & \text{if}\ \p(s)\in \p(V)\\ 0, &\text{else,}\end{cases}\] that is, $\til{1}_V$ is supported on $R^\times\cdot V$ and sends $s$ to $t$ if $ts\in V$ with $t\in R^\times$.
 It is shown in~\cite[Proposition~2.8]{twistedreconstruction} that $A_R(\mathscr G;\Sigma)$ is spanned as an $R$-module by the $\tilde{1}_V$.  So to show that $\psi$ is onto, we just need that $\til{1}_V$ is in the image of $\psi$ for any  $V\in \Gamma_c(\Sigma)$.  Let $U=\p(V)\in \Gamma_c(\mathscr G)$ and note that $\p|_{j(U)}\colon j(U)\to U$ is a homeomorphism by Proposition~\ref{p:local.homeo}.
 Then we can define $a\in A$ by
 \[a(x) = \begin{cases} \pi(\iota\inv((\p|_V)\inv ((\ran|_U)^{\inv}(x))(\p|_{j(U)})\inv ((\ran|_U)^{\inv}(x))\inv)), & x\in \ran(U)\\ 0, & \text{else.}\end{cases}\]  Note that $a$ is supported on $\ran(U)$ and so belongs to $A$.
 We claim that $f_{a,U} = \til{1}_V$.     Both functions vanish on all $s\in \Sigma$ with $\p(s)\notin U$.  If $\p(s)\in U$, let $t_1,t_2\in R^\times$ with $t_1s\in j(U)$ and $t_2s\in V$.  Then $\til{1}_V(s)=t_2$.  On the other hand, $f_{a,U}(s) = a(\ran(\p(s)))t_1=t_2t_1\inv t_1=t_2$, as required.  We conclude that $\psi$ is onto.

  We next show that $I=\ker \psi$.
 Suppose that $a\in A$ and $U\leq V\in \Gamma_c(\mathscr G)$.  We need to show that $f_{a,U} = f_{ac(V,\dom(U))^*,V}$.  Note that $\supp(c(V,\dom(U))^*) = \ran(V\cdot \dom(U))=\ran(U)$.  Thus both functions vanish on any $s\in \Sigma$ with $\p(s)\notin U$.  Assume that $\p(s)\in U$.  Let $\til s\in j(U)$ with $\p(\til s)=\p(s)$ and $\ov s\in j(V)$ with $\p(\ov s)=\p(s)$.  Since $j(\dom(U)) = \dom(U)$, we  then have that \[j(V)\dom (U) = \{\iota(c(V,\dom(U))(\ran(g)),\ran(g))\mid g\in U\}j(U),\] and so $\ov s=c(V,\dom(U))(\ran(\p(s)))\til s$. If $s=u\til s$ with $u\in R^\times$.  Then $s=uc(V,\dom(U))(\ran(\p(s)))\inv \ov s$.  Therefore, $f_{a,U}(s) =a(\ran(\p(s)))u\inv$ and
  \begin{align*}
  f_{a(c(V,\dom(U))^*,V}(s) &= a(\ran(\p(s))(c(V,\dom(U))(\ran(\p(s))))\inv c(V,\dom(U))(\ran(\p(s))u\inv\\ &=a(\ran(\p(s))u\inv,
   \end{align*}
  as required.  Thus $I\subseteq \ker \psi$.

Let $z\in \bigoplus_{U\in \Gamma_c(\mathscr G)}A\delta_U$ belong to $\ker \psi$. Then $z+I = \sum_{i=1}^k a_i\delta_{U_i}+I$ with $\supp(a_i)=\ran(U_i)$ and the $U_i$ pairwise disjoint and non-empty by Lemma~\ref{l:disjointify} with possibly $k=0$.  Since $I\subseteq \ker \psi$, we conclude that $0=\psi(z)=\sum_{i=1}^k f_{a_i, U_i}$.  Since the $U_i$ are pairwise disjoint, we deduce that each $f_{a_i, U_i}=0$.  Since $\supp(a_i)=\ran(U_i)$, the only way $f_{a_i, U_i}$ vanishes is if $U_i=\emptyset$. We deduce that $k=0$ and hence $z\in I$.  This completes the proof.
\end{proof}

\def\malce{\mathbin{\hbox{$\bigcirc$\rlap{\kern-7.75pt\raise0,50pt\hbox{${\tt
  m}$}}}}}\def\cprime{$'$} \def\cprime{$'$} \def\cprime{$'$} \def\cprime{$'$}
  \def\cprime{$'$} \def\cprime{$'$} \def\cprime{$'$} \def\cprime{$'$}
  \def\cprime{$'$} \def\cprime{$'$}

\end{document}